\definecolor{dark-gray}{gray}{0.3}
\definecolor{dkgray}{rgb}{.4,.4,.4}
\definecolor{dkblue}{rgb}{0,0,.5}
\definecolor{medblue}{rgb}{0,0,.75}
\definecolor{rust}{rgb}{0.5,0.1,0.1}
\newtheorem{theorem}{Theorem}[section]
\newtheorem{lemma}[theorem]{Lemma}
\newtheorem{proposition}[theorem]{Proposition}
\newtheorem{corollary}[theorem]{Corollary}
\theoremstyle{definition}
\newtheorem{definition}[theorem]{Definition}
\newtheorem{remark}[theorem]{Remark}
\numberwithin{equation}{section} 
\numberwithin{figure}{section}
\numberwithin{table}{section}
\numberwithin{recipe}{section}
\providecommand{\mathbold}[1]{\bm{#1}}  %
\renewcommand{\phi}{\varphi}
\renewcommand{\mid}{\mathrel{\mathop{:}}}
\newcommand{\Id}{\mathbf{I}}
\providecommand{\mathbbm}{\mathbb} %
\newcommand{\IN}{\mathbbm{N}}
\newcommand{\polar}{\circ}
\newcommand{\argmin}{\operatorname*{arg\; min}}
\newcommand{\Prob}{\mathbbm{P}}
\newcommand{\Expect}{\operatorname{\mathbb{E}}}
\DeclareMathOperator{\Gl}{Gl}
\newcommand{\vct}[1]{\mathbold{#1}}
\newcommand{\IR}{\mathbbm{R}}
\newcommand{\mO}{\mathcal{O}}
\newcommand{\mL}{\mathcal{L}}
\newcommand{\mU}{\mathcal{U}}
\newcommand{\lin}{\operatorname{lin}}
\newcommand{\inter}{{\operatorname{int}}}
\newcommand{\Lk}{L^{(k)}}
\newcommand{\GL}{\operatorname{GL}}
\newcommand{\rev}{\operatorname{rev}}
\newcommand{\LEFT}{\textsc{Left}}
\newcommand{\RIGHT}{\textsc{Right}}
\newcommand{\Disjunionu}[1]{\underset{#1}{+\hspace{-3.5mm}\bigcup}}
\newcommand{\Disjunionuo}[2]{\overset{#2}{\underset{#1}{+\hspace{-3.5mm}\bigcup}}}
\newcommand{\Skel}{S}
\newcommand{\mSkel}{\mathcal{S}}
\DeclareMathOperator{\add}{add}
\newcommand{\vp}{\varphi}
\DeclareMathOperator{\spa}{span}
\DeclareMathOperator{\Lin}{Lin}
\DeclareMathOperator{\BL}{BL}
\newcommand{\BLs}{\BL\!\!^*}
\renewcommand{\P}{\mathcal{P}}
\newcommand{\invadj}{\circ}
\newcommand{\M}{\mathcal{M}}
\newcommand{\N}{\mathcal{N}}
\newcommand{\Q}{\mathcal{Q}}
\newcommand{\bprod}{\,\hat\times\,}
\newcommand{\Dir}{\Delta}
\newcommand{\mDir}{\hat{\Delta}}
\newcommand{\bproj}{\hat\Pi}
\newcommand{\mB}{\mathscr{B}}
\newcommand{\hmB}{\hat{\mathscr{B}}}
\newcommand{\mM}{\mathbbm{M}}
\newcommand{\hmM}{\hat{\mathbbm{M}}}
\title[Measures on polyhedral cones]{Measures on polyhedral cones: \\ characterizations and kinematic formulas}
\author[D.~Amelunxen]{Dennis Amelunxen}
\date{\today}
\begin{document}

\begin{abstract}
This paper is about conic intrinsic volumes and their associated integral geometry. We pay special attention to the biconic localizations of the conic intrinsic volumes, the so-called support measures. An analysis of these quantities has so far been confined to the PhD thesis of Stefan Glasauer (1995). We rederive the results from this thesis with novel streamlined proofs and expand them in several ways. Additionally, we introduce a new class of functionals on polyhedral cones lying between the intrinsic volumes and the well-studied $\vct f$-vector, which counts the equidimensional faces of a cone, and derive a characterization and kinematic formulas for these functionals as well.
\end{abstract}

\maketitle

\section{Introduction}\label{sec:intro}

Spherical integral geometry has so far led a niche existence, outshone by the classical theory of Euclidean integral geometry whose roots date back to the 18th century with Buffon's famous needle problem~\cite{KR:97}. Although these two settings are closely related, they are different in some decisive aspects, and a discussion of the spherical context as an exotic version of the Euclidean one almost certainly falls short of providing a full view on this independent theory. Furthermore, recent developments~\cite{vesp:92,DT:05,dota:09a,edge} have shown that spherical, or conic, integral geometry can be extremely useful in some applied areas of mathematics such as numerical optimization and compressed sensing. This paper serves as an elementary and mostly self-contained introduction to the theory of conic integral geometry with an emphasis on the (conic and biconic) localizations. The deepest results so far have been achieved by Glasauer~\cite{Gl,Gl:summ}. But, although parts of his work has been included in~\cite[Sec.~6.5]{SW:08}, the proofs of his results on \emph{support measures} can solely be found in his thesis. Additionally, he adopts the spherical viewpoint, which makes the theory of localizations incomplete and raises unnecessary hurdles for the proofs. By consistently adopting the conic viewpoint we obtain a complete theory of localizations with streamlined proofs, which we present in a self-contained manner. See Section~\ref{sec:contribs} below for a more detailed description of the specific contributions of this paper.

In the following sections we introduce the conic intrinsic volumes, describe their surprisingly numerous parallels to the $\vct f$-vector of a cone, and give a summary of the main results. To avoid introducing too much notation in the introduction, the statements of these results will be given in their nonlocalized versions while the more general forms are deferred to the main body of the paper.

\subsection{Counting and measuring faces}

A set $C\subseteq\IR^d$ is a \emph{(convex) polyhedral cone} if it can be described as the intersection of finitely many closed half-spaces, that is, $C=\{\vct x\in\IR^d\mid \vct{Ax}\leq\vct0\}$ for some matrix $\vct A\in\IR^{m\times d}$. Equivalently, $C\subseteq\IR^d$ is a polyhedral cone if it is the nonnegative linear hull of a finite set of vectors in $\IR^d$, $C=\{\vct{By}\mid \vct y\in\IR^k, \vct y\geq0\}$ for some matrix $\vct B\in\IR^{d\times k}$. We denote the set of polyhedral cones in~$\IR^d$ by~$\P(\IR^d)$.

A subset $F\subseteq C$ of a polyhedral cone~$C$ is a \emph{face} of~$C$ if there exists $\vct z\in\IR^d$ such that
\begin{equation}\label{eq:exp-face}
  \langle \vct x,\vct z\rangle \leq 0 \text{ for all } \vct x\in C \quad\text{and}\quad F = \{\vct x\in C\mid \langle\vct x,\vct z\rangle=0\} .
\end{equation}
Clearly, every face of a polyhedral cone is again a polyhedral cone, and choosing $\vct z=\vct 0$ shows that in particular~$C$ itself is a face of~$C$. A polyhedral cone has finitely many faces, and the \emph{$\vct f$-vector} counts these faces according to their dimensions: for $C\in\P(\IR^d)$,
\begin{align}\label{eq:f-vect}
   \vct f(C) & = \big(f_0(C),\ldots,f_d(C)\big) , & f_k(C) & := \big|\{F\mid F \text{ face of } C \text{ with } \dim(\spa(F))=k\}\big| ,
\end{align}
where $\spa(F) := F-F$ denotes the linear span of~$F$. Although the $\vct f$-vector will not be part of the investigations of this paper, we recall some of its fundamental properties~\cite{Z:95}:
\begin{enumerate}\setcounter{enumi}{-1}
  \item \emph{linear invariance:} The $\vct f$-vector is invariant under nondegenerate linear transformations, $\vct f(\vct TC)=\vct f(C)$ for $C\in\P(\IR^d)$ and $\vct T\in\Gl_d$.
  \item \emph{subspaces:} If $L\subseteq\IR^d$ is an $m$-dimensional linear subspace, then $f_m(L)=1$ and $f_k(L)=0$ for all $k\neq m$.
  \item \emph{products:} The $\vct f$-vector of the product $C\times D$ of polyhedral cones~$C,D$ is given by the convolution of the $\vct f$-vectors of~$C$ and~$D$,
  \begin{equation}\label{eq:f_k(CxD)=...}
    f_k(C\times D) = \sum_{i+j=k} f_i(C)f_j(D) .
  \end{equation}
  \item \emph{polarity:} Denoting the \emph{polar cone} of~$C\in\P(\IR^d)$ by $C^\polar := \{\vct z\in\IR^d\mid \langle \vct x,\vct z\rangle\leq 0 \text{ for all } \vct x\in C \} \in\P(\IR^d)$, the $\vct f$-vector of $C^\polar$ is the reverse $\vct f$-vector of~$C$,
  \begin{equation}\label{eq:f_k(C^polar)}
    f_k(C^\polar) = f_{d-k}(C) .
  \end{equation}
  \item \emph{Euler property:} The Euler characteristic yields for every polyhedral cone $C\in\P(\IR^d)$, which is not a linear subspace,
  \begin{equation}\label{eq:Euler-f}
    \sum_{k=0}^d (-1)^k f_k(C) = 0 .
  \end{equation}
\end{enumerate}

This paper is not about the $\vct f$-vector\footnote{See~\cite{S:14} for a recent survey on this impressive theory.} but about certain variants of it, which may be conceived as ``weighted versions'' of the $\vct f$-vector. For this we introduce the following notation: the set of linear spans of the ($k$-dimensional) faces of $C\in\P(\IR^d)$ shall be denoted by
  \[ \mL(C) := \{\spa(F)\mid F\text{ face of } C\} ,\qquad \mL_k(C) := \{L\in\mL(C)\mid \dim(L)=k\} . \]
Note that $L\cap C$ with $L\in\mL(C)$ is a face of~$C$, and all faces of~$C$ are of this form. In particular, $\mL(C)$ is a finite set and $f_k(C)=|\mL_k(C)|$. The supporting planes of a cone~$C\in\P(\IR^d)$ and of its polar~$C^\polar$ are related via the bijections
\begin{equation}\label{eq:L_k(C)--L_(d-k)(C^polar)}
  \mL_k(C)\to\mL_{d-k}(C^\polar) ,\qquad L\mapsto L^\bot ,
\end{equation}
where $L^\bot:=\{\vct z\in\IR^d\mid \langle \vct x,\vct z\rangle=0 \text{ for all } \vct x\in L\} \, (=L^\polar)$ denotes the orthogonal complement of~$L$. 

One of the most important, yet entirely simple, results, which we will make heavy use of in this paper, is that a polyhedral cone decomposes disjointly into the relative interiors of its faces, i.e.,
\begin{align}\label{eq:fac-decomp-C}
  C & = \Disjunionu{L\in\mL(C)} \inter_L(C\cap L) = \Disjunionuo{k=0}{d} \Skel_k(C) , & \Skel_k(C) & := \Disjunionu{L\in\mL_k(C)} \inter_L(C\cap L) ,
\end{align}
where $\inter_L$ shall denote the interior with respect to the relative topology in~$L$. The set $\Skel_k(C)$ is called the \emph{$k$-skeleton} of~$C$. Furthermore, we denote the standard Gaussian measure on~$L$ by $\gamma_L$ and abbreviate $\gamma_d:=\gamma_{\IR^d}$. In the case $\dim L=0$ this coincides with the Dirac measure supported at the origin, for which we use the notation $\Dir=\gamma_0$.

The \emph{$\vct u$-vector} of~$C\in\P(\IR^d)$ collects the sums of the Gaussian volumes of equidimensional faces of~$C$:
\begin{align}\label{eq:def-u-vect}
   \vct u(C) & = \big(u_0(C),\ldots,u_d(C)\big) , & u_k(C) & := \sum_{L\in\mL_k(C)} \gamma_L(C\cap L) .
\end{align}
Note that $u_0(C)=f_0(C)$ and $u_d(C)=\gamma_d(C)$. The $\vct u$-vector is a weaker invariant than the $\vct f$-vector; clearly, linear transformations can change the Gaussian volumes of the faces. However, we still have orthogonal invariance:
\begin{itemize}
  \item[(0$^*$)] orthogonal invariance: The $\vct u$-vector is invariant under orthogonal transformations, $\vct u(\vct QC)=\vct u(C)$ for $C\in\P(\IR^d)$ and $\vct Q\in O(d)$.
\end{itemize}
The $\vct u$-vector satisfies the inequalities $0\leq u_k\leq f_k$, and for linear subspaces the $\vct u$- and the $\vct f$-vector coincide, so that the $\vct u$-vector also has property~(1). Furthermore, the $\vct u$-vector also satisfies the product rule~(2), cf.~\eqref{eq:f_k(CxD)=...}.

On the other hand, the $\vct u$-vector possesses neither a polarity property as in~\eqref{eq:f_k(C^polar)} nor an Euler property as in~\eqref{eq:Euler-f}. The $\vct v$-vector, to be introduced next, may be thought of as the straightforward way to reestablish the polarity relation, which, almost mysteriously, not only reestablishes the Euler property as well, but also yields further fundamental properties.

The \emph{$\vct v$-vector} of~$C\in\P(\IR^d)$ collects the \emph{(conic) intrinsic volumes}~\cite{McM:75} of~$C$:
\begin{align}\label{eq:def-v-vect}
   \vct v(C) & = \big(v_0(C),\ldots,v_d(C)\big) , & v_k(C) & := \sum_{L\in\mL_k(C)} \gamma_L(C\cap L) \; \gamma_{L^\bot}(C^\polar\cap L^\bot) .
\end{align}
Alternatively, one can define the intrinsic volume of a polyhedral cone by combining the decomposition~\eqref{eq:fac-decomp-C} with the projection map $\Pi_C\colon\IR^d\to C$, $\Pi_C(\vct x)=\argmin\{\|\vct x-\vct y\|\mid \vct y\in C\}$: denoting by $\vct g\in\IR^d$ a standard Gaussian vector, we have
\begin{equation}\label{eq:def-v-vect-Prob}
   v_k(C) = \Prob\big\{\Pi_C(\vct g)\in\Skel_k(C)\big\} .
\end{equation}

From the definition~\eqref{eq:def-v-vect} it is immediate that $0\leq v_k\leq u_k$. Note also that from the characterization~\eqref{eq:def-v-vect-Prob} one directly obtains $v_0(C)+\cdots+v_d(C) = 1$. So the $\vct v$-vector is actually a discrete probability distribution.\footnote{A loose but intuitive interpretation of $v_k(C)$ is that it describes the ``amount of $k$-dimensionality'' of $C$; if $v_7(C)=0.23$ then $C$ is $23\%$ $7$-dimensional.} Furthermore, the $\vct v$-vector is orthogonal invariant, i.e., it satisfies property~($0^*$), and it satisfies the subspace and the product rules~(1) and~(2), just as the $\vct u$-vector does. Unlike the $\vct u$-vector, the $\vct v$-vector also satisfies the polarity property~(3), which follows from the bijection~\eqref{eq:L_k(C)--L_(d-k)(C^polar)} between the supporting subspaces of~$C$ and~$C^\polar$. Less obvious is the fact that the $\vct v$-vector even satisfies the Euler property~(4). But more than that, the $\vct v$-vector possesses two additional remarkable properties, which are not shared by the $\vct f$-vector:
\begin{enumerate}\setcounter{enumi}{4}
  \item \emph{additivity:} If $C,D\in\P(\IR^d)$ are such that $C+D=C\cup D$, or equivalently, $C\cup D\in\P(\IR^d)$, then
  \begin{equation}\label{eq:additivity-v}
    \vct v(C \cup D) + \vct v(C\cap D) = \vct v(C) + \vct v(D) .
  \end{equation}
  \item \emph{continuity:} If $C_i\in\P(\IR^d)$, $i\in\IN$, such that $\lim_{i\to\infty} C_i=C\in\P(\IR^d)$ with respect to the conic Hausdorff metric\footnote{The conic Hausdorff metric is just the spherical Hausdorff metric which is obtained by replacing a cone by its intersection with the unit sphere, cf.~\cite[Sec.~3.2]{am:thesis}.}, then
  \begin{equation}\label{eq:continuity-v}
    \lim_{i\to\infty} \vct v(C_i)=\vct v(C) .
  \end{equation}
\end{enumerate}

\begin{remark}
The weaker invariance of the $\vct u$- and $\vct v$-vector may be bemoaned, but in fact this weaker invariance is a necessary requirement for the features, which we will discuss next, the kinematic formulas. These formulas treat, for example, the $\vct u$- or $\vct v$-vector of the (random) intersection of one cone with a randomly rotated second cone. It turns out that the \emph{expectation} of these random vectors can again be expressed in terms of the $\vct u$- and $\vct v$-vectors, respectively, of the components. Formulas of this kind do not hold for the $\vct f$-vector simply because of its linear invariance: taking linear transformations of the components does not change their $\vct f$-vectors, but it does of course change the probabilities for the random intersections. So in a sense we have traded the strong invariance of the $\vct f$-vector for new probabilistic formulas. Strangely enough, we will see that through the kinematic formulas we will also regain linear invariance \emph{in expectation},~cf.~\eqref{eq:TQC-u,v} below.
\end{remark}

See Table~\ref{tab:props-f-u-v} for an overview of the properties of the $\vct f$-/$\vct u$-/$\vct v$-vectors.

\begin{table}
  \begin{center}
\begin{tabu}{|c|c|c|}
   \hline $\vct f$ & $\vct u$ & $\vct v$
\\ \tabucline[1pt]{-}
   $f_k\in\IN$ & $0\leq u_k\leq f_k$ & $0\leq v_k\leq u_k$, $\sum_k v_k = 1$
\\\hline $\vct f(\vct TC)=\vct f(C)$ &
$\begin{array}{r@{\,}c@{\,}l} \vct u(\vct QC) & = & \vct u(C) \\ \Expect\big[\vct u(\vct{TQ}C)\big] & = & \vct u(C) \end{array}$
  &
$\begin{array}{r@{\,}c@{\,}l} \vct v(\vct QC) & = & \vct v(C) \\ \Expect\big[\vct v(\vct{TQ}C)\big] & = & \vct v(C) \end{array}$
\\\hline $f_k(L) = \begin{cases} 1 & \text{if } k=\dim L \\ 0 & \text{else} \end{cases}$ & $u_k(L) = \begin{cases} 1 & \text{if } k=\dim L \\ 0 & \text{else} \end{cases}$ & $v_k(L) = \begin{cases} 1 & \text{if } k=\dim L \\ 0 & \text{else} \end{cases}$
\\\hline $f_k(C) = f_{d-k}(C^\polar)$ & -- & $v_k(C) = v_{d-k}(C^\polar)$
\\\hline $\displaystyle f_k(C\times D) = \sum_{i+j=k} f_i(C)f_j(D)$ & $\displaystyle u_k(C\times D) = \sum_{i+j=k} u_i(C)u_j(D)$ & \rule{0mm}{4mm} $\displaystyle v_k(C\times D) = \sum_{i+j=k} v_i(C)v_j(D)$
\\\hline $\begin{matrix} \sum_k (-1)^k f_k(C)=0 \\ \text{if $C$ not a linear subspace}\end{matrix}$ & -- & $\begin{matrix} \sum_k (-1)^k v_k(C)=0 \\ \text{if $C$ not a linear subspace}\end{matrix}$
\\\hline --
  & $\begin{matrix} \Expect\big[ u_k(C\cap\vct QD)\big] = \rule{18mm}{0mm}\rule{0mm}{4mm} \\[2mm] \displaystyle\sum_{i+j=d+k} u_i(C)u_j(D) \quad\text{ if } k>0 \end{matrix}$
  & $\begin{matrix} \Expect\big[ v_k(C\cap\vct QD)\big] = \rule{18mm}{0mm}\rule{0mm}{4mm} \\[2mm] \displaystyle\sum_{i+j=d+k} v_i(C)v_j(D) \quad\text{ if } k>0 \end{matrix}$
\\\hline -- & -- & $\begin{matrix} \vct v(C\cup D)+\vct v(C\cap D)=\vct v(C)+\vct v(D) \\ \text{if } C\cup D\in\P(\IR^d) \end{matrix}$
\\\hline -- & -- & $\displaystyle \lim_{i\to\infty} C_i=C \Rightarrow \lim_{i\to\infty} \vct v(C_i)=\vct v(C)$
\\\hline
\end{tabu}
  \end{center}
  \caption{Elementary properties of the $\vct f,\vct u,\vct v$-vectors: $C,D,C_i\in\P(\IR^d)$, $\vct T\in\Gl_d$, $\vct Q\in O(d)$, $L\subseteq\IR^d$ linear subspace, the expectations are with respect to $\vct Q\in O(d)$ uniformly at random.}
  \label{tab:props-f-u-v}
\end{table}

\subsection{Conic kinematic formulas}

Before stating the kinematic formulas, note that iteratively applying the product rule~(2), cf.~\eqref{eq:f_k(CxD)=...}, yields for $C_0,\ldots,C_n\in\P(\IR^d)$,
  \[ u_m(C_0\times\cdots\times C_n) = \sum_{i_0+\cdots+i_n=m} u_{i_0}(C_0)\cdots u_{i_n}(C_n) , \]
and similarly for $v_m(C_0\times\cdots\times C_n)$. In the following we say that $\vct Q\in O(d)$ uniformly at random if~$\vct Q$ is distributed according to the normalized Haar measure; $\Expect[\cdots]$ shall denote the expectation.

\begin{theorem}[Kinematic formulas for $\vct u$ and $\vct v$]\label{thm:kinform-u,v}
Let $C_0,\ldots,C_n\in\P(\IR^d)$ and $\vct T_0,\ldots,\vct T_n\in\Gl_d$, and let $k>0$. Then for $\vct Q_0,\ldots,\vct Q_n\in O(d)$ iid uniformly at random,
\begin{align}
   \Expect\bigg[ u_k\bigg( \bigcap_{i=0}^n \vct T_i\vct Q_i C_i \bigg)\bigg] & = u_{nd+k}(C_0\times\cdots\times C_n) ,
\label{eq:kinform-u}
\\ \Expect\bigg[ v_k\bigg( \bigcap_{i=0}^n \vct T_i\vct Q_i C_i \bigg)\bigg] & = v_{nd+k}(C_0\times\cdots\times C_n) .
\label{eq:kinform-v}
\end{align}
\end{theorem}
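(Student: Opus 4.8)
The plan is to derive both formulas from a Hadwiger-type characterization: I would show that the left-hand sides of \eqref{eq:kinform-u}--\eqref{eq:kinform-v}, regarded as functionals of each cone $C_i$ separately, belong to the class of functionals that such a characterization pins down, and then simply read off the resulting expansion. Fix $k>0$, the dimension $d$, the number $n$, and the maps $\vct T_0,\dots,\vct T_n\in\Gl_d$, and write
\[
  \Phi(C_0,\dots,C_n)\;:=\;\Expect\Big[v_k\Big(\textstyle\bigcap_{i=0}^n\vct T_i\vct Q_iC_i\Big)\Big],
\]
with $\vct Q_0,\dots,\vct Q_n\in O(d)$ iid Haar-distributed; the claim \eqref{eq:kinform-v} is that $\Phi(C_0,\dots,C_n)=v_{nd+k}(C_0\times\dots\times C_n)$, and the case of $\vct u$ is handled identically.

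First I would check that, for fixed values of the other arguments, the map $C_i\mapsto\Phi(C_0,\dots,C_n)$ is a continuous, $O(d)$-invariant valuation on $\P(\IR^d)$. Orthogonal invariance in $C_i$ is clear: replacing $C_i$ by $\vct PC_i$ with $\vct P\in O(d)$ replaces the Haar element $\vct Q_i$ by the Haar element $\vct Q_i\vct P$, and the fixed map $\vct T_i$ plays no role whatsoever. For the valuation property, if $C_i\cup C_i'\in\P(\IR^d)$ then $\vct T_i\vct Q_i(C_i\cup C_i')\cap R=(\vct T_i\vct Q_iC_i\cap R)\cup(\vct T_i\vct Q_iC_i'\cap R)$, where $R$ is the intersection of the remaining (momentarily fixed) cones; the union on the right is again a polyhedral cone, so the additivity \eqref{eq:additivity-v} of $\vct v$ applies to this pair, and taking expectations gives additivity of $\Phi$. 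Continuity is the point demanding the usual care: for $C_i^{(\ell)}\to C_i$ the intersection $\bigcap_j\vct T_j\vct Q_jC_j^{(\ell)}$ converges to $\bigcap_j\vct T_j\vct Q_jC_j$ for all rotations outside a null set (those for which the rotated cones meet non-transversally), so continuity \eqref{eq:continuity-v} of $\vct v$ together with dominated convergence ($v_k\le1$) give $\Phi(C_i^{(\ell)})\to\Phi(C_i)$.

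Now I would invoke the conic Hadwiger theorem: every continuous, $O(d)$-invariant valuation $\phi$ on $\P(\IR^d)$ is a linear combination of $v_0,\dots,v_d$ and hence, by evaluating on subspaces, satisfies $\phi=\sum_{j=0}^d\phi(L_j)\,v_j$, where $L_j$ denotes any $j$-dimensional linear subspace. Expanding $\Phi$ successively in each of its arguments then yields
\[
  \Phi(C_0,\dots,C_n)\;=\;\sum_{j_0,\dots,j_n}\Phi(L_{j_0},\dots,L_{j_n})\prod_{i=0}^n v_{j_i}(C_i).
\]
It remains to evaluate the coefficients, and this is exactly where the maps $\vct T_i$ become invisible: each $\vct T_i\vct Q_iL_{j_i}$ is a linear subspace of dimension $j_i$, and as $(\vct Q_i)$ ranges over $O(d)^{n+1}$ these subspaces are almost surely in general position (the bad set is the preimage, under a submersion, of a proper subvariety of $\prod_i\Gr(j_i,d)$), so $\bigcap_i\vct T_i\vct Q_iL_{j_i}$ is almost surely a subspace of dimension $\max\{0,\sum_ij_i-nd\}$; since $v_k$ of a $p$-dimensional subspace equals $[k=p]$ and $k>0$, this gives $\Phi(L_{j_0},\dots,L_{j_n})=[\,\sum_ij_i=nd+k\,]$. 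Substituting and applying the iterated product rule for $\vct v$ recorded just before the theorem,
\[
  \Phi(C_0,\dots,C_n)\;=\;\sum_{j_0+\dots+j_n=nd+k}\ \prod_{i=0}^nv_{j_i}(C_i)\;=\;v_{nd+k}(C_0\times\dots\times C_n),
\]
which is \eqref{eq:kinform-v}. The argument for \eqref{eq:kinform-u} is word for word the same, with $\vct u$ in place of $\vct v$: one uses the characterization theorem for the $\vct u$-vector established in this paper instead of conic Hadwiger, together with $u_k(L_p)=[k=p]$ and the product rule for $\vct u$.

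The hard part is not this bookkeeping but the characterization theorems it rests on. For $\vct v$ this is the conic Hadwiger theorem itself. For $\vct u$ it is considerably more delicate, since $\vct u$ is neither additive ($u_1$ already fails additivity on a pair of adjacent quadrants) nor continuous; one must therefore isolate the correct structural axioms for the $\vct u$-vector, prove that they characterize it, and then verify that the kinematic integral $\Phi$ inherits those axioms in each argument --- the substitute for the valuation/continuity check above. The secondary technical nuisance is the almost-sure transversality invoked both in the continuity step and in the coefficient computation; as in the body of the paper this is most cleanly dealt with by first establishing the whole statement in its localized form at the level of the support measures and only afterwards specializing to \eqref{eq:kinform-u}--\eqref{eq:kinform-v}.
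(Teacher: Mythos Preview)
Your argument for the $\vct v$-formula via the conic Hadwiger theorem is correct and is a genuine alternative to what the paper does. The paper never invokes Hadwiger (additivity plus continuity) for the scalar $v_k$; instead it passes immediately to the biconic localizations $\Theta_k(C,\cdot)$, proves a characterization of these via \emph{concentration, invariance, and locality} (Theorem~\ref{thm:biconic-char}), and then runs the very same induction-on-the-number-of-cones that you describe, but at the level of measures rather than scalars. Your route buys simplicity in the $\vct v$-case at the price of needing additivity and continuity; the paper's route avoids both of those properties and instead uses a locality axiom, which is why the same template then works for $\vct u$.

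This is exactly where your proposal for $\vct u$ breaks. You write that the argument is ``word for word the same'' using ``the characterization theorem for the $\vct u$-vector established in this paper''. There is no such theorem at the scalar level, and there cannot be one of the shape you need. The paper's characterization (Theorem~\ref{thm:conic-char}) is for the \emph{measures} $\Psi_k(C,M)$, and its decisive axiom is the locality clause: $\psi(C,M)=\psi(D,M)$ whenever $\Skel_j(C)\cap M=\Skel_j(D)\cap M$ for all~$j$. Specialized to $M=\IR^d$ this says nothing more than $\psi(C)=\psi(D)$ when $C=D$; the axiom is vacuous globally. So a purely scalar functional $C\mapsto\phi(C)$ cannot be pinned down this way, and since $u_k$ is neither a valuation nor continuous you have no Hadwiger-type substitute either. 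The only way to make the template run for $\vct u$ is to keep the second argument $M$ alive throughout --- i.e.\ to prove the localized kinematic formula for $\Psi_k$ first (Theorem~\ref{thm:kinform-polyh}) and then set $M_0=\dots=M_n=\IR^d$. That is precisely the paper's approach, and your final paragraph in fact anticipates it; but the body of your proposal does not carry it out, and the ``word for word'' claim for $\vct u$ is a genuine gap.
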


Note that as special cases of~\eqref{eq:kinform-u} and~\eqref{eq:kinform-v}, we obtain for $C\in\P(\IR^d)$, $\vct T\in\GL_d$, and $\vct Q\in O(d)$ uniformly at random,
\begin{align}\label{eq:TQC-u,v}
  \Expect\big[\vct u(\vct{TQ}C)\big] & = \vct u(C) , & \Expect\big[\vct v(\vct{TQ}C)\big] & = \vct v(C) .
\end{align}
So although $\vct u$ and $\vct v$ in general both fail to be linear invariants, they are still linear invariants \emph{in expectation}.

The additional polarity property of the $\vct v$-vector has important consequences for the kinematics. For example, since $(C\cap D)^\polar=C^\polar+D^\polar$ we immediately obtain the following corollary from Theorem~\ref{thm:kinform-u,v} (see Section~\ref{sec:prelims-polyh-cones} for some subtleties involving the linear transformations $\vct T_0,\ldots,\vct T_n$).

\begin{corollary}[Polar kinematic formula for $\vct v$]\label{cor:kinform-v-polar}
Let the notation and assumptions be as in Theorem~\ref{thm:kinform-u,v}. Then
\begin{align}\label{eq:kinform-v-polar}
   \Expect\bigg[ v_{d-k}\bigg( \sum_{i=0}^n \vct T_i\vct Q_i C_i \bigg)\bigg] & = v_{d-k}(C_0\times\cdots\times C_n) .
\end{align}
\end{corollary}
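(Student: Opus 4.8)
The plan is to dualize everything and reduce directly to the intersection formula~\eqref{eq:kinform-v} of Theorem~\ref{thm:kinform-u,v}, exactly as the remark preceding the corollary suggests. Three ingredients suffice: the polarity property~(3) of the $\vct v$-vector, $v_k(K)=v_{d-k}(K^\polar)$; the duality between Minkowski sums and intersections of polyhedral cones, $\big(\sum_i K_i\big)^\polar=\bigcap_i K_i^\polar$; and the behaviour of polarity under an invertible linear change of variables, $(\vct M K)^\polar=\vct M^{-\transp}K^\polar$ for $\vct M\in\Gl_d$, which for an orthogonal~$\vct Q$ specializes to $(\vct Q K)^\polar=\vct Q K^\polar$.

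First I would rewrite the left-hand side using~(3) in $\IR^d$: $v_{d-k}\big(\sum_{i}\vct T_i\vct Q_iC_i\big)=v_k\big(\big(\sum_{i}\vct T_i\vct Q_iC_i\big)^\polar\big)$. Pushing the polar inside the sum and through each linear map (and using $\vct Q_i^{-\transp}=\vct Q_i$),
\[
  \Big(\sum_{i=0}^n\vct T_i\vct Q_iC_i\Big)^\polar=\bigcap_{i=0}^n(\vct T_i\vct Q_iC_i)^\polar=\bigcap_{i=0}^n\vct T_i^{-\transp}\vct Q_iC_i^\polar .
\]
Setting $\vct S_i:=\vct T_i^{-\transp}\in\Gl_d$ and $D_i:=C_i^\polar\in\P(\IR^d)$, this reads $\bigcap_i\vct S_i\vct Q_iD_i$, so that pointwise in the $\vct Q_i$,
\[
  v_{d-k}\Big(\sum_{i=0}^n\vct T_i\vct Q_iC_i\Big)=v_k\Big(\bigcap_{i=0}^n\vct S_i\vct Q_iD_i\Big) .
\]
Taking expectations over the iid uniform $\vct Q_i$ and invoking~\eqref{eq:kinform-v} (legitimate since $k>0$) yields $v_{nd+k}(D_0\times\cdots\times D_n)$.

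It then remains to identify $v_{nd+k}(D_0\times\cdots\times D_n)$ with $v_{d-k}(C_0\times\cdots\times C_n)$. The product lives in $\IR^{(n+1)d}$, and since the factors occupy orthogonal coordinate blocks, $(C_0\times\cdots\times C_n)^\polar=C_0^\polar\times\cdots\times C_n^\polar=D_0\times\cdots\times D_n$; applying~(3) in dimension $(n+1)d$ then gives
\[
  v_{nd+k}(D_0\times\cdots\times D_n)=v_{(n+1)d-(nd+k)}\big((D_0\times\cdots\times D_n)^\polar\big)=v_{d-k}(C_0\times\cdots\times C_n) ,
\]
which closes the chain. (Alternatively one could expand both products with the product rule~(2) and the reversal $v_j\leftrightarrow v_{d-j}$, but this route is shorter.)

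There is no analytic difficulty here: the argument is pure bookkeeping of polars and indices on top of Theorem~\ref{thm:kinform-u,v}. The only point needing care — and the reason for the pointer to Section~\ref{sec:prelims-polyh-cones} in the statement — is the treatment of the linear transformations. The polar turns each $\vct T_i$ into $\vct T_i^{-\transp}$, so one must observe that the admissible class of transformations is closed under $\vct T\mapsto\vct T^{-\transp}$ (indeed $\Gl_d$ is a group under this operation, so quantifying over $\vct T_i$ or over $\vct T_i^{-\transp}$ is the same), and that the identities $(\vct T_i\vct Q_iC_i)^\polar=\vct T_i^{-\transp}\vct Q_iC_i^\polar$ and $\big(\sum_iK_i\big)^\polar=\bigcap_iK_i^\polar$ carry no closure or transversality caveats — which holds because a finite Minkowski sum of polyhedral cones is again polyhedral, hence closed.
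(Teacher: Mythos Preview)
Your argument is correct and matches the paper's approach exactly: the paper also deduces the corollary by replacing each $C_i$ by $C_i^\polar$ and each $\vct T_i$ by $\vct T_i^{-\transp}$ (written there as $\vct T_i^\invadj$) in~\eqref{eq:kinform-v} and then applying polarity, with Section~\ref{sec:prelims-polyh-cones} recording precisely the identities $(\vct TC)^\polar=\vct T^\invadj C^\polar$ and $\vct Q^\invadj=\vct Q$ that you use. Your final identification $v_{nd+k}(C_0^\polar\times\cdots\times C_n^\polar)=v_{d-k}(C_0\times\cdots\times C_n)$ via polarity in $\IR^{(n+1)d}$ is the clean way to close the argument.
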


Moreover, the fact that $\vct v$ is a probability distribution, i.e., $v_0+\cdots+v_d=1$, and linearity of expectation yield the following formulas for the boundary cases in~\eqref{eq:kinform-v} and~\eqref{eq:kinform-v-polar}.

\begin{corollary}[Boundary cases for $\vct v$]\label{cor:kinform-v-bd_cases}
Let the notation be as in Theorem~\ref{thm:kinform-u,v}. Then
\begin{align}\label{eq:kinform-v-bd_cases}
   \Expect\bigg[ v_0\bigg( \bigcap_{i=0}^n \vct T_i\vct Q_i C_i \bigg)\bigg] & = \sum_{j=0}^{nd} v_j(C_0\times\cdots\times C_n) , & \Expect\bigg[ v_d\bigg( \sum_{i=0}^n \vct T_i\vct Q_i C_i \bigg)\bigg] & = \sum_{j=0}^{nd} v_{d+j}(C_0\times\cdots\times C_n) .
\end{align}
\end{corollary}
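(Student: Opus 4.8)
The plan is to derive both identities as purely combinatorial consequences of Theorem~\ref{thm:kinform-u,v}, respectively its polar companion Corollary~\ref{cor:kinform-v-polar}, together with two elementary facts: the $\vct v$-vector of every polyhedral cone is a probability distribution, so $v_0+\cdots+v_d=1$, and expectation is linear. No new integral geometry is involved.

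For the first identity, set $C_{\vct Q}:=\bigcap_{i=0}^n\vct T_i\vct Q_iC_i\in\P(\IR^d)$. Applying the normalization $\sum_{k=0}^d v_k(C_{\vct Q})=1$ and taking expectations gives
\[
  \Expect\bigl[v_0(C_{\vct Q})\bigr]=1-\sum_{k=1}^d\Expect\bigl[v_k(C_{\vct Q})\bigr].
\]
For each $k\in\{1,\ldots,d\}$ the kinematic formula~\eqref{eq:kinform-v} evaluates $\Expect[v_k(C_{\vct Q})]=v_{nd+k}(C_0\times\cdots\times C_n)$, whence $\Expect[v_0(C_{\vct Q})]=1-\sum_{j=nd+1}^{(n+1)d}v_j(C_0\times\cdots\times C_n)$. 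Since $C_0\times\cdots\times C_n\in\P(\IR^{(n+1)d})$, its intrinsic volumes also sum to~$1$, and subtracting the tail $\sum_{j=nd+1}^{(n+1)d}$ from $\sum_{j=0}^{(n+1)d}$ leaves precisely $\sum_{j=0}^{nd}v_j(C_0\times\cdots\times C_n)$, as claimed.

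The second identity is the polar mirror image. Put $S_{\vct Q}:=\sum_{i=0}^n\vct T_i\vct Q_iC_i$ and use $\sum_{m=0}^d v_m(S_{\vct Q})=1$ together with linearity of expectation. The only index not covered by the hypothesis $k>0$ of Corollary~\ref{cor:kinform-v-polar} is $m=d$ (i.e.\ $d-k=d$, $k=0$), which is exactly the term we want. For $m\in\{0,\ldots,d-1\}$, writing $k=d-m\in\{1,\ldots,d\}$, Corollary~\ref{cor:kinform-v-polar} gives $\Expect[v_m(S_{\vct Q})]=v_m(C_0\times\cdots\times C_n)$, so $\Expect[v_d(S_{\vct Q})]=1-\sum_{m=0}^{d-1}v_m(C_0\times\cdots\times C_n)=\sum_{j=d}^{(n+1)d}v_j(C_0\times\cdots\times C_n)=\sum_{j=0}^{nd}v_{d+j}(C_0\times\cdots\times C_n)$, using the normalization of the product cone once more.

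The only place demanding a modicum of care --- and the nearest thing here to an obstacle --- is bookkeeping with the index ranges: one must invoke the kinematic formula only on the ``interior'' indices $k\in\{1,\ldots,d\}$, check that the excluded endpoint coincides with the quantity being solved for, and keep straight that the product cone lives in dimension $(n+1)d$ so that the complementary range of intrinsic volumes is indexed $0$ through $nd$. Beyond that the argument is immediate.
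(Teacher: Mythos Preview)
Your proof is correct and follows exactly the approach the paper indicates: the text immediately preceding Corollary~\ref{cor:kinform-v-bd_cases} states that the boundary cases follow from the kinematic formulas~\eqref{eq:kinform-v} and~\eqref{eq:kinform-v-polar} together with the fact that $\vct v$ is a probability distribution and linearity of expectation, and your argument spells this out precisely.
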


\begin{remark}
If one combines the kinematic formulas for the conic intrinsic volumes with the Euler property $\sum_k (-1)^k v_k(C)=0$ if $C$ not a linear subspace, then one obtains the so-called \emph{Crofton formulas}, which describe the intersection probabilities of randomly rotated cones. These Crofton formulas form the link between the theory of conic integral geometry and the applications in optimization and compressed sensing. See~\cite{edge,MT:13} for further details.
\end{remark}

The different kinematic formulas for~$\vct v$ given in~\eqref{eq:kinform-v}, \eqref{eq:kinform-v-polar},~\eqref{eq:kinform-v-bd_cases} can in fact be seen as special cases\footnote{We only prove the general kinematic formula~\eqref{eq:gen-kinform-v} with the restriction $\vct T_0,\ldots,\vct T_n\in O(d)$, cf.~Remark~\ref{rem:gen-kinform}.} of a general kinematic formula, which we formulate next.

If $F(X_0,\ldots,X_n)$ denotes a Boolean formula in the variables $X_0,\ldots,X_n$, and if $C_0,\ldots,C_n\in\P(\IR^d)$, we define the evaluation $F(C_0,\ldots,C_n)\in\P(\IR^d)$ to be the result of the following replacements in the formula $F(X_0,\ldots,X_n)$:
  \[\begin{tabu}{r|[1pt]c|c|c|c}
       \text{replace} & X_i & \wedge & \vee & \neg(\cdots)
    \\\hline   
       \text{by} & C_i & \cap & + & (\cdots)^\polar
    \end{tabu} \quad . \]
So if, for example $F(X_0,X_1,X_2) = \neg(X_0\wedge X_1)\vee X_2$, then $F(C_0,C_1,C_2)=(C_0\cap C_1)^\polar + C_2$. Note that $C+D = C\cup D$ if the latter is a convex cone, which leads to the more familiar case where the logical~$\vee$ corresponds to~$\cup$.

A Boolean formula in which every variable appears at most once is called a \emph{Boolean read-once formula}. If $F(X_0,\ldots,X_n)$ is a Boolean read-once formula and if $L_0,\ldots,L_n\subseteq\IR^d$ are linear subspaces in general position,\footnote{See Appendix~\ref{sec:genericity} for more details on these genericity assumptions.} then $F(L_0,\ldots,L_n)$ is again a linear subspace in general position whose dimension only depends on~$F$, the ambient dimension~$d$, and the dimensions of $L_0,\ldots,L_n$. We may thus define for every read-once formula~$F(X_0,\ldots,X_n)$ and for any dimension~$d$ the function
\begin{align}
   \dim^F_d & \colon \{0,\ldots,d\}^{n+1} \to \{0,\ldots,d\} , & \dim^F_d(k_0,\ldots,k_n) & = \dim\big( F(L_0,\ldots,L_n) \big) ,
\end{align}
where $L_0,\ldots,L_n\subseteq\IR^d$ are linear subspaces in general position with $\dim(L_i)=k_i$. For example, in the cases $F(X_0,\ldots,X_n)=X_0\wedge\cdots\wedge X_n$ and $F(X_0,\ldots,X_n)=X_0\vee\cdots\vee X_n$ we obtain
  \[\begin{tabu}{r|[1pt]c|c}
       F(X_0,\ldots,X_n) = & X_0\wedge\cdots\wedge X_n & X_0\vee\cdots\vee X_n
    \\\hline
       \dim^F_d(k_0,\ldots,k_n) = & \max\big\{ 0, k_0+\cdots+k_n-nd \big\} & \min\big\{ d, k_0+\cdots+k_n \big\}
    \end{tabu} \quad . \]

In the following theorem we formulate a generalization of the kinematic formulas for~$\vct v$ given in~\eqref{eq:kinform-v}, \eqref{eq:kinform-v-polar}, and~\eqref{eq:kinform-v-bd_cases}, which we prove in the special case $\vct T_0,\ldots,\vct T_n\in O(d)$. Of course, if $\vct T_i\in O(d)$ fixed and $\vct Q_i\in O(d)$ uniformly at random, then $\vct T_i\vct Q_i\in O(d)$ uniformly at random, so we might as well drop the $\vct T_i$ here. But we include them nevertheless to ease the comparison with the other formulas and also because the restriction $\vct T_i\vct Q_i\in O(d)$ should be seen as an (unsubstantial) artefact of the proof, cf.~also Remark~\ref{rem:gen-kinform}.

\begin{theorem}[General kinematic formula for $\vct v$]\label{thm:gen-kinform-v}
Let $C_0,\ldots,C_n\in\P(\IR^d)$ and $\vct T_0,\ldots,\vct T_n\in O(d)$, and let $0\leq k\leq d$. Furthermore, let $F(X_0,\ldots,X_n)$ be a Boolean read-once formula. Then for $\vct Q_0,\ldots,\vct Q_n\in O(d)$ iid uniformly at random,
\begin{equation}\label{eq:gen-kinform-v}
  \Expect\bigg[ v_k\Big( F\big(\vct T_0\vct Q_0 C_0,\ldots,\vct T_n\vct Q_n C_n\big) \Big)\bigg] = \sum_{\dim^F_d(k_0,\ldots,k_n)=k} v_{k_0}(C_0) \cdots v_{k_n}(C_n) .
\end{equation}
\end{theorem}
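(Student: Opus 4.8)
The plan is to argue by structural induction on the read-once formula~$F$, using Theorem~\ref{thm:kinform-u,v} in its two-operand case ($n=1$) as the engine and the elementary properties of~$\vct v$ recalled in Section~\ref{sec:intro} — orthogonal invariance $\vct v(\vct QC)=\vct v(C)$, the polarity relation $v_k(K^\polar)=v_{d-k}(K)$, the product rule $v_m(K\times L)=\sum_{i+j=m}v_i(K)v_j(L)$, and $\sum_k v_k(K)=1$ — as the glue. The base case $F=X_i$ is exactly orthogonal invariance $\vct v(\vct T_i\vct Q_iC_i)=\vct v(C_i)$ (legitimate precisely because $\vct T_i\vct Q_i\in O(d)$, which is why the theorem restricts the $\vct T_i$ to $O(d)$) together with the triviality $\dim^{X_i}_d(k_i)=k_i$. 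If $F$ is not a single variable it has a top connective, so $F=\neg G$, $F=G\wedge H$, or $F=G\vee H$ for read-once formulas $G,H$ on disjoint variable blocks; we may reindex so that $G$ involves $X_0,\ldots,X_m$ and $H$ involves $X_{m+1},\ldots,X_n$.

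The crucial reduction is that every cone operation commutes with orthogonal maps — $\vct R(K\cap L)=\vct RK\cap\vct RL$, $\vct R(K+L)=\vct RK+\vct RL$, and $(\vct RK)^\polar=\vct RK^\polar$ for $\vct R\in O(d)$ — hence so does any read-once evaluation, $G(\vct RK_0,\ldots,\vct RK_m)=\vct R\,G(K_0,\ldots,K_m)$. Since each $\vct T_i\vct Q_i$ is Haar on $O(d)$ and the $\vct Q_i$ are independent, the random cones $A:=G(\vct T_0\vct Q_0C_0,\ldots,\vct T_m\vct Q_mC_m)$ and $B:=H(\vct T_{m+1}\vct Q_{m+1}C_{m+1},\ldots,\vct T_n\vct Q_nC_n)$ are each $O(d)$-invariant in distribution, and they are independent of one another (functions of disjoint blocks of rotations). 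I would therefore first prove the \emph{two-operand lemma}: if $A,B$ are independent, $O(d)$-invariant random polyhedral cones in $\IR^d$, then $\Expect[v_k(A\cap B)]=\sum_{i+j=d+k}\Expect[v_i(A)]\Expect[v_j(B)]$ for $k>0$, and $\Expect[v_k(A+B)]=\sum_{i+j=k}\Expect[v_i(A)]\Expect[v_j(B)]$ for $k<d$. For the first identity: introduce a fresh Haar $\vct Q$ independent of everything; invariance of $B$ and independence of $A,B$ give $A\cap B\stackrel{\mathrm d}{=}A\cap\vct QB$; conditioning on $A,B$ and invoking Theorem~\ref{thm:kinform-u,v} with $n=1$, $\vct T_0=\vct T_1=\Id$, $C_0=A$, $C_1=B$ yields $\Expect_{\vct Q}[v_k(A\cap\vct QB)]=v_{d+k}(A\times B)=\sum_{i+j=d+k}v_i(A)v_j(B)$ for $k>0$; taking the outer expectation and using independence of $A$ and $B$ finishes it. The additive identity follows by taking polars: $(A+\vct QB)^\polar=A^\polar\cap\vct QB^\polar$ (using $\vct Q\in O(d)$), so $v_k(A+\vct QB)=v_{d-k}(A^\polar\cap\vct QB^\polar)$, and one reruns the argument for the (again independent, $O(d)$-invariant) cones $A^\polar,B^\polar$ and converts back via the polarity relation for~$\vct v$.

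With the lemma in hand the inductive step is bookkeeping. For $F=\neg G$: $v_k\big(G(\ldots)^\polar\big)=v_{d-k}\big(G(\ldots)\big)$ and $\dim^F_d(k_0,\ldots,k_n)=d-\dim^G_d(k_0,\ldots,k_n)$, so the claim for $F$ at level $k$ is literally the claim for $G$ at level $d-k$ (already available by induction, for every level). For $F=G\wedge H$: the lemma applied to $A,B$ gives, for $k>0$, $\Expect[v_k(F(\ldots))]=\sum_{i+j=d+k}\Expect[v_i(A)]\Expect[v_j(B)]$, and the inductive hypotheses for $G$ and $H$ turn $\Expect[v_i(A)]$ into $\sum_{\dim^G_d(k_0,\ldots,k_m)=i}v_{k_0}(C_0)\cdots v_{k_m}(C_m)$ and similarly for $B$; the double sum then runs over all $(k_0,\ldots,k_n)$ with $\dim^G_d(k_0,\ldots,k_m)+\dim^H_d(k_{m+1},\ldots,k_n)=d+k$. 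Since $F(L_0,\ldots,L_n)=G(L_0,\ldots,L_m)\cap H(L_{m+1},\ldots,L_n)$ and for subspaces $P,R$ in general position $\dim(P\cap R)=\max\{0,\dim P+\dim R-d\}$, one gets the composition rule $\dim^F_d(k_0,\ldots,k_n)=\max\{0,\dim^G_d(k_0,\ldots,k_m)+\dim^H_d(k_{m+1},\ldots,k_n)-d\}$, so that constraint is exactly $\dim^F_d(\cdots)=k$ when $k>0$; the remaining value $k=0$ is recovered from $\sum_k v_k=1$ together with $\sum_{k_0,\ldots,k_n}v_{k_0}(C_0)\cdots v_{k_n}(C_n)=\prod_i\big(\sum_{k_i}v_{k_i}(C_i)\big)=1$. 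The case $F=G\vee H$ is identical, using the additive half of the lemma, the rule $\dim(P+R)=\min\{d,\dim P+\dim R\}$ (whence $\dim^F_d(\cdots)=\min\{d,\dim^G_d(\cdots)+\dim^H_d(\cdots)\}$), and recovering the value $k=d$ by the same mass argument.

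The one substantive point to pin down is the genericity input underpinning the dimension function: that $F(L_0,\ldots,L_n)$ is again a subspace in general position whenever $L_0,\ldots,L_n$ are, so that $\dim^F_d$ is well defined and the composition rules above hold, and in particular that the subspaces $G(L_0,\ldots,L_m)$ and $H(L_{m+1},\ldots,L_n)$ — built from disjoint generic blocks — are themselves in general position with each other, so that $\dim(G(\cdot)\cap H(\cdot))$ and $\dim(G(\cdot)+H(\cdot))$ obey the generic formulas. This is precisely the content of Appendix~\ref{sec:genericity}; granting it (together with the routine fact that the independent Haar rotations put the evaluated cones $A$ and~$B$ into sufficiently general relative position for the conditional application of Theorem~\ref{thm:kinform-u,v}), the induction goes through, everything else being repeated use of Fubini, independence, and the four displayed properties of~$\vct v$. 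The main obstacle is thus the interface between the probabilistic reduction to two operands and the combinatorics of $\dim^F_d$; once the two-operand lemma and the composition rules are isolated, the rest is mechanical.
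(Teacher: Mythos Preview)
Your proposal is correct and follows essentially the same route as the paper: structural induction on the read-once formula, with the key step being to exploit $O(d)$-invariance of the distributions of the two sub-evaluations to insert fresh Haar rotations and reduce to the two-operand kinematic formula~\eqref{eq:kinform-v} (together with the boundary case from Corollary~\ref{cor:kinform-v-bd_cases}), then apply the inductive hypothesis to each block. The only cosmetic differences are that the paper first pushes all negations down to the variables via De~Morgan and then treats~$\vee$ by reducing to~$\wedge$, whereas you keep~$\neg$ as a separate inductive case and handle~$\vee$ directly through the polar form of your two-operand lemma; and the paper does the ``insert fresh rotations'' trick inline rather than isolating it as a lemma.
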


Note that~\eqref{eq:kinform-v} is~\eqref{eq:gen-kinform-v} in the special case $F(X_0,\ldots,X_n)=X_0\wedge\cdots\wedge X_n$, while~\eqref{eq:kinform-v-polar} is the case $F(X_0,\ldots,X_n)=X_0\vee\cdots\vee X_n$. Likewise, the boundary cases in~\eqref{eq:kinform-v-bd_cases} are covered by these choices for~$F$.

\begin{remark}\label{rem:gen-kinform}
Only in dimension $d=1$ the general kinematic formula~\eqref{eq:gen-kinform-v} is true for \emph{every} Boolean formula; for $d\geq2$ one can find counterexamples, like the formula $F(X_0,X_1)=(X_0\vee X_1)\wedge \neg X_0$, for which the general kinematic formula~\eqref{eq:gen-kinform-v} \emph{fails}.

Again, since $\vct T_0,\ldots,\vct T_n$ are assumed to be orthogonal, their inclusion in~\eqref{eq:gen-kinform-v} does not constitute a generalization of the case $\vct T_0=\cdots=\vct T_n=\Id_d$. But we choose to write them nevertheless also to emphasize that the inclusion of general (nondegenerate) transformations does affect the proof strategy for this kind of generalizations of the kinematic formula. In the proof given in Section~\ref{sec:gen-kin-form} we will say explicitly where the orthogonality assumption is used; the same proof strategy does not seem to yield the general case $\vct T_0,\ldots,\vct T_n\in \Gl_d$.
\end{remark}

\subsection{Localizations}\label{sec:localizations}

The key to proving the kinematic formulas provided in the previous section is to consider \emph{localizations} of~$\vct u$ and~$\vct v$. In this section we give the definitions of these localizations and also provide a description of the general proof strategy for the kinematic formulas. For details about the involved characterization theorems we refer to the following sections.

We say that a Borel set $M\subseteq\IR^d$ is a \emph{conic Borel set} if it is invariant under positive scaling, i.e., $\lambda M=M$ for all $\lambda>0$. The set of conic Borel sets is called the \emph{conic (Borel) $\sigma$-algebra} on~$\IR^d$, denoted
\begin{align}\label{eq:def-B(R^d)}
   \hmB(\IR^d) & := \{ M\subseteq\IR^d \text{ Borel set}\mid \lambda M=M \text{ for all } \lambda>0 \} .
\end{align}
We denote the set of $\sigma$-additive real functions defined on this algebra by~$\hmM(\IR^d)$, the set of \emph{conic measures}.
The most important conic measures are the Dirac measure~$\Dir$ supported at the origin and the (standard) Gaussian measure~$\gamma_d$. Every orthogonal invariant conic measure is a linear combination of~$\Dir$ and~$\gamma_d$, cf.~Lemma~\ref{lem:orth-inv-conic-meas}.

To get the connection to the $\vct u$- and $\vct v$-vectors, one considers families of conic measures, which are parametrized by polyhedral cones. The \emph{polyhedral measures}~$\Psi_k(C,\cdot)$ and the \emph{curvature measures}~$\Phi_k(C,\cdot)$, where $C\in\P(\IR^d)$ and $0\leq k\leq d$, are defined by
\begin{align}
   \Psi_k(C,M) & := \sum_{L\in\mL_k(C)} \gamma_L(C\cap L\cap M) ,
\label{eq:def-Psi_k}
\\ \Phi_k(C,M) & := \sum_{L\in\mL_k(C)} \gamma_L(C\cap L\cap M) \, \gamma_{L^\bot}(C^\polar\cap L^\bot) ,
\label{eq:def-Phi_k}
\end{align}
where $M\in\hmB(\IR^d)$. Note that $\Psi_k(C,\IR^d)=u_k$ and $\Phi_k(C,\IR^d)=v_k$, so the polyhedral measures and the curvature measures are localizations of the $\vct u$- and $\vct v$-vector, respectively. An alternative characterization for the curvature measures is given by
\begin{equation}\label{eq:def-Phi-vect-Prob}
   \Phi_k(C,M) = \Prob\big\{\Pi_C(\vct g)\in\Skel_k(C)\cap M\big\} ,
\end{equation}
where $\vct g\in\IR^d$ denotes a standard Gaussian vector, cf.~\eqref{eq:def-v-vect-Prob}.

The general proof strategy for the kinematic formula of the $\vct u$-vector~\eqref{eq:kinform-u} consists of proving a characterization theorem for the polyhedral measures (see Theorem~\ref{thm:conic-char}), and to use this to prove a kinematic formula for the polyhedral measures (see Theorem~\ref{thm:kinform-polyh}), which specializes to~\eqref{eq:kinform-u}. As for the intrinsic volumes~$\vct v$ we could proceed similarly with the curvature measures. But a characterization of these measures is a bit more complicated, which makes the curvature measures a less favorable tool. So instead, we will use some other measures which require introducing a bit more notation, but which share much of the simplicity and naturalness of the polyhedral measures.

The \emph{biconic (Borel) $\sigma$-algebra} on~$\IR^d\times\IR^d=:\IR^{d+d}$ is defined by
\begin{align}\label{eq:def-B(R^d,R^d)}
  \hmB(\IR^d,\IR^d) & := \{ \M\subseteq\IR^{d+d} \text{ Borel set}\mid (\lambda,\lambda') \M=\M \text{ for all }\lambda,\lambda'>0 \} ,
\end{align}
where $(\lambda,\lambda')\M:=\{(\lambda\vct x,\lambda'\vct x')\mid (\vct x,\vct x')\in\M\}$. Again, we consider families of biconic measures parametrized by polyhedral cones. The \emph{support measures}~$\Theta_k(C,\cdot)$, where $C\in\P(\IR^d)$ and $0\leq k\leq d$, are defined by
\begin{align}\label{eq:def-Theta_k}
   \Theta_k(C,\M) & := \Prob\Big\{\Pi_C(\vct g)\in\Skel_k(C) \text{ and } \big(\Pi_C(\vct g),\Pi_{C^\polar}(\vct g)\big)\in\M \Big\} ,
\end{align}
where $\Pi_C\colon\IR^d\to C$ denotes again the orthogonal projection map, and where $\vct g\in\IR^d$ is a standard Gaussian vector. Comparing this with~\eqref{eq:def-v-vect-Prob} we see that $v_k(C)=\Theta_k(C,\IR^{d+d})$, so also the support measures are localizations (biconic this time) of the intrinsic volumes. Moreover, if the biconic set is a direct product $\M=M\times M'$, then we obtain
\begin{equation}\label{eq:def-Theta_k-dirprod}
  \Theta_k(C,M\times M') = \sum_{L\in\mL_k(C)} \gamma_L(C\cap L\cap M) \, \gamma_{L^\bot}(C^\polar\cap L^\bot\cap M') .
\end{equation}
Therefore, $\Theta_k(C,M\times\IR^d)=\Phi_k(C,M)$, so the support measures also localize the curvature measures. In fact, the support measures, due to their inherent symmetry between the primal cone~$C$ and its polar~$C^\polar$, appear to be the more natural choice for a localization of the intrinsic volumes than the curvature measures. This impression is further supported by the specific form of the characterization theorem for the support measures, which shares much of the simplicity with that of the polyhedral measures; its proof is almost as elementary. We will exploit this characterization in a similar way as for the polyhedral measures and derive a corresponding (biconically) localized version of~\eqref{eq:kinform-v} in Theorem~\ref{thm:kinform-supp}.

\subsection{Outline}\label{sec:outline}

The organization of the paper is as follows. In Section~\ref{sec:conic-localizations} we present the theory around the \emph{conic} localizations. After two short preliminary sections we discuss the polyhedral and curvature measures in Section~\ref{sec:polyh-curv-meas}, and we prove a characterization theorem for the polyhedral measures in Section~\ref{sec:char-polyh}. From this we derive the kinematic formula for the polyhedral measures in Section~\ref{sec:calc-kinem-conic}, which generalizes the kinematic formula~\eqref{eq:kinform-u} for the $\vct u$-vector. Section~\ref{sec:biconic-localizations} follows the same structure, but in the \emph{biconic} setting: After a preliminary section on the biconic $\sigma$-algebra we discuss in Section~\ref{sec:suppmeas} the support measures, for which we prove a characterization theorem in Section~\ref{sec:char-supp-meas}, which is then used to prove the kinematic formula for the support measures in Section~\ref{sec:kinform-suppmeas}. In Section~\ref{sec:gen-kin-form} we prove the extension of the kinematic formula, as given in Theorem~\ref{thm:gen-kinform-v}. We also give an analogous generalized kinematic formula for the support measures, which can be proved in the same way.

We supplement this with two appendices. The first of these is devoted to the Steiner formulas, which play an important role in generalizing the theory around the intrinsic volumes, the curvature measures, and the support measures from polyhedral to general convex cones. Since we limit the discussion in this paper to polyhedral cones, we will not make extensive use of these formulas, but we include a short discussion for completeness. Appendix~\ref{sec:genericity} is devoted to settling some subtleties that arise in the proofs of the kinematic formulas. We chose to move these technicalities to the appendix to improve the readability of the proofs of the kinematic formulas in Sections~\ref{sec:calc-kinem-conic} and~\ref{sec:kinform-suppmeas}. These genericity statements are also intuitively clear so that a relocation to the appendix seems natural.

\subsection{Contributions}\label{sec:contribs}

Integral geometry in spaces of constant curvature goes back to at least the work of Santal\'o~\cite{Sant}; see also the work by Howard~\cite{H:93}, and~\cite{AF:14} for more recent developments in the theory of valuations on manifolds. This paper does not follow this line of research, which mostly uses differential geometric methods and does not use the specific features of the conic setting. Instead, we present an elementary approach to the theory of support measures of convex \emph{polyhedral} cones exploiting the specific properties of the conic context. An entirely new aspect is here that the classical approach of first proving a characterization theorem and then deriving from this certain kinematic formulas, does not only apply to the localizations of the intrinsic volumes, the curvature/support measures, but also applies to a new class of functionals, the polyhedral measures. This new aspect also illustrates the assessment that only the \emph{biconic} viewpoint naturally leads to the support measures (and thus to the curvature measures and intrinsic volumes) while the \emph{conic} viewpoint naturally leads to the polyhedral measures, which do not satisfy an additivity property, and thus are not part of the existing literature on conic integral geometry.

As already mentioned right from the start, the bulk of this paper is based on material from Glasauer's thesis~\cite{Gl,Gl:summ}. However, we have carried out a number of changes and additions to justify a separate paper. The most apparent change is that we do not use the spherical setting but argue in a conical context using the concepts of the conic and biconic $\sigma$-algebras, which were introduced in~\cite{ambu:12}. This seemingly superficial change has some important consequences. First of all, the conic versions of the curvature measures and the support measures also cover the boundary cases, which are excluded in the spherical theory. So the conic viewpoint allows a more complete theory. Furthermore, the proofs allow significant simplifications and streamlining, which makes the conic theory more accessible. 

The $\vct u$-vector and its localization given by the polyhedral measures are new concepts, which are apparently introduced here for the first time; the corresponding characterization and kinematic formulas are to that effect also new. These new measures are arguably more natural conic measures than the curvature measures. Indeed, the proof of the characterization theorem for the polyhedral measures (Section~\ref{sec:char-polyh}) in the end only consists of combining the elementary fact that the usual (Lebesgue) volume on the sphere is characterized through orthogonal invariance, with the facial decomposition of polyhedral cones. The characterization theorem for the curvature measures by Schneider, see for example~\cite{SW:08}, is more complicated and has to use the additivity property of the curvature measures (which, of course, the polyhedral measures do not possess). The proof of the characterization of the support measures (Section~\ref{sec:char-supp-meas}) follows the same line of arguments as that of the polyhedral measures and thus shares the same simplicity. The only difference is here that the biconic setting requires a bit more notation so that the proof may look more complicated on a first reading. The resulting kinematic formulas for the support measures (Section~\ref{sec:kinform-suppmeas}) are thus not only more general than those for the curvature measures, but in the end also have a (conceptually) simpler proof.

The duality property of the support measures, which is characteristic to the conic setting, is also needed to derive the localized projection formulas, cf.~Remark~\ref{rem:proj-form-curvmeas}. These formulas can be useful in the probabilistic analysis of convex programming, cf.~\cite[Rem.~2.1]{ambu:12}.

The equality $\Expect[\vct v(\vct{TQ}C)] = \vct v(C)$ in~\eqref{eq:TQC-u,v} has first been observed by Mike McCoy and Joel Tropp~\cite{MTpc:13}. Incorporating this newly discovered invariance (in expectation) into the kinematic formulas requires a new proof strategy in Sections~\ref{sec:calc-kinem-conic}/\ref{sec:kinform-suppmeas} than typically used, cf.~for example~\cite[Sec.~5.3.1]{McC:thesis}. This subtle point is made precise in the proof of Theorem~\ref{thm:gen-kinform-v} provided in Section~\ref{sec:gen-kin-form}.

\subsection{Acknowledgments}

Part of this research was carried out while spending a year at the mathematics department of The University of Manchester. Special thanks go to my collaborator and host in Manchester Martin Lotz for his broad support and for many useful and interesting discussions. I would also like to thank Mike McCoy and Joel Tropp as well as Peter Bürgisser for useful discussions on conic intrinsic volumes. This research was partly supported by DFG grant AM 386/1-2 and EPSRC grant EP/I01912X/1-CF05.

\section{Conic localizations}\label{sec:conic-localizations}

In this section we first provide in Sections~\ref{sec:prelims-polyh-cones} and~\ref{sec:prelim-conmeas} some preliminaries about polyhedral cones and about the conic $\sigma$-algebra, respectively, before we discuss the properties of the polyhedral measures and the curvature measures in Section~\ref{sec:polyh-curv-meas}. Section~\ref{sec:char-polyh} is devoted to the characterization of the polyhedral measures, which is then used in Section~\ref{sec:calc-kinem-conic} to prove a kinematic formula, which generalizes the kinematic formula for the $\vct u$-vector,~\eqref{eq:kinform-u}.

\subsection{Preliminaries: polyhedral cones}\label{sec:prelims-polyh-cones}

Most of the relevant notation and properties of the polyhedral cones, which we will use in this paper, has already been introduced in Section~\ref{sec:intro}. Here we supplement some further aspects.

The first of these aspects concerns (nonsingular) linear transformations~$\vct T\in\Gl_d$. The $\vct f$-vector is invariant under these transformations, $\vct f(\vct TC)=\vct f(C)$. More precisely, the faces of $\vct TC$ are all of the form $\vct TF$ for some face $F$ of $C$, or in terms of the corresponding linear subspaces,
  \[ \mL_k(\vct TC)=\big\{ \vct TL\mid L\in \mL_k(C)\big\} , \]
for $k=0,\ldots,d$. Recall that Corollary~\ref{cor:kinform-v-polar} is supposed to follow immediately from Theorem~\ref{thm:kinform-u,v} via polarity. More precisely, to deduce this one needs to know that the polar of~$\vct TC$ is given by
\begin{align}\label{eq:def-T^invadj}
   (\vct TC)^\polar & = \vct T^\invadj C^\polar ,  \quad\text{where} \qquad \vct T^\invadj := \big(\vct T^{-1}\big)^T = \big(\vct T^T\big)^{-1} .
\end{align}
Note that we have for $\vct S,\vct T\in \Gl_d$, $\vct Q\in O(d)$,
\begin{align*}
   \big(\vct T^\invadj\big)^\invadj & = \vct T , & (\vct S\vct T)^\invadj & = \vct S^\invadj\vct T^\invadj , & \vct Q^\invadj = \vct Q .
\end{align*}
Replacing $C_0,\ldots,C_n$ and $\vct T_0,\ldots,\vct T_n$ in Theorem~\ref{thm:kinform-u,v} by $C_0^\polar,\ldots,C_n^\polar$ and $\vct T_0^\invadj,\ldots,\vct T_n^\invadj$, respectively, and applying polarity yields Corollary~\ref{cor:kinform-v-polar}.

The next important aspect we need to address is that of direct products. The faces of $C\times D$, where $C,D\in\P(\IR^d)$, are given by the products of the faces of $C$ and $D$. In terms of the corresponding linear subspaces, we obtain
\begin{equation}\label{eq:L_k(CxD)=...}
  \mL_k(C\times D) = \Disjunionu{i+j=k} \big\{ L_0\times L_1\mid L_0\in\mL_i(C), L_1\in\mL_j(D) \} .
\end{equation}
For the skeletons of a direct product we have the following useful formula:
\begin{equation}\label{eq:Skel_k(CxD)=...}
  \Skel_k(C\times D) = \Disjunionu{i+j=k} \Skel_i(C)\times \Skel_j(D) .
\end{equation}
These formulas~\eqref{eq:L_k(CxD)=...} and~\eqref{eq:Skel_k(CxD)=...} are verified easily.

The final aspect we will address here concerns the largest linear subspace contained in the cone and the dimension of the linear span of the cone. The largest linear subspace contained in~$C$ is given by $C\cap (-C)$, and its dimension is known as the \emph{lineality} of the cone,
\begin{equation}\label{eq:def-lin(C)}
  \lin(C)=\dim(C\cap (-C)) .
\end{equation}
The dimension of the linear span is connected with the lineality via polarity, as it is easy to verify that
\begin{equation*}
   \dim(\spa(C)) + \lin(C^\polar) = d .
\end{equation*}
To avoid redundancy we therefore do not introduce a separate notation for the dimension of the linear span of a cone. Note that the lineality of a product is given by $\lin(C\times D)=\lin(C)+\lin(D)$.

It turns out that the lineality fits perfectly into our theory if we define the $\vct\ell$-vector of a cone in the following way: for $C\in\P(\IR^d)$,
\begin{align}\label{eq:def-ell(C)}
   \vct\ell(C) & = \big(\ell_0(C),\ldots,\ell_d(C)\big) , & \ell_k(C) & = \begin{cases} 1 & \text{if } \lin(C)=k \\ 0 & \text{else} . \end{cases}
\end{align}
Note that if we used the dimension of the linear span instead of the lineality for this vector construction, then the resulting vector would be the reversal of $\vct\ell(C^\polar)$. Note also that
\begin{equation}\label{eq:l_0(C)=f_0(C)=u_0(C)}
  \ell_0(C) = f_0(C) = u_0(C) \quad (\neq v_0(C) \text{ in general}) .
\end{equation}
The $\vct\ell$-vector (and its localization, to be introduced in Section~\ref{sec:prelim-conmeas} below) will play a role in the characterization of the polyhedral measures, cf.~Section~\ref{sec:char-polyh}.

We note that the $\vct\ell$-vector satisfies the properties (0)--(2) of the $\vct f$-vector, i.e., linear invariance: $\vct\ell(\vct TC)=\vct\ell(C)$ for all $\vct T\in\Gl_d$, subspace-property: $\vct\ell(L)=\vct f(L)=\vct u(L)=\vct v(L)$ for linear subspaces $L\subseteq\IR^d$, and product rule: $\ell_k(C\times D)=\sum_{i+j=k}\ell_i(C)\ell_j(C)$. It also satisfies a ``strong kinematic formula'', as shown in the following proposition, cp.~Theorem~\ref{thm:kinform-u,v}.

\begin{proposition}\label{prop:kin-form-ell-vect}
Let $C_0,\ldots,C_n\in\P(\IR^d)$ and $\vct T_0,\ldots,\vct T_n\in\Gl_d$. Then for almost all $(\vct Q_0,\ldots,\vct Q_n)\in O(d)^{n+1}$ and for $k>0$,
\begin{align}\label{eq:kinform-ell}
   \ell_k\bigg( \bigcap_{i=0}^n \vct T_i\vct Q_i C_i \bigg) & = \ell_{nd+k}(C_0\times\cdots\times C_n) , & \ell_0\bigg( \bigcap_{i=0}^n \vct T_i\vct Q_i C_i \bigg) & = \sum_{j=0}^{nd} \ell_j(C_0\times\cdots\times C_n) .
\end{align}
\end{proposition}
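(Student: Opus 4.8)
The statement concerns the lineality $\lin(C)=\dim(C\cap(-C))$ of a generic intersection of rotated cones. The plan is to reduce everything to elementary facts about generic positions of linear subspaces, using the observation that the lineality of a cone is an intrinsically linear-algebraic quantity: $C\cap(-C)$ is the largest linear subspace inside $C$, and for a polyhedral cone $C$ this subspace equals $\bigcap\mL_0(C)$, or equivalently the orthogonal complement of $\spa(C^\polar)$. Concretely, I would first record the key identity $\lin(C)=d-\dim(\spa(C^\polar))$, which follows from the polarity relation $\dim(\spa(C))+\lin(C^\polar)=d$ stated in the excerpt (apply it with $C$ replaced by $C^\polar$ and use $(C^\polar)^\polar=C$ for a pointed-enough cone; the degenerate subspace cases are handled directly). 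Thus controlling $\lin$ of an intersection is the same as controlling $\dim\spa$ of a sum of polars, and $(\bigcap_i \vct T_i\vct Q_i C_i)^\polar=\sum_i (\vct T_i\vct Q_i C_i)^\polar=\sum_i \vct T_i^\invadj\vct Q_i C_i^\polar$ by the polar-of-image and polar-of-intersection formulas in Sections~\ref{sec:prelims-polyh-cones} and~\ref{sec:intro}.

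Next I would isolate the genericity input. For a cone $C$, let $\Lin(C):=\spa(C)$; the point is that for a linear subspace $\vct T\vct Q L$ in generic rotated position, dimensions of spans and intersections of sums behave like the generic ones, i.e. $\dim(L_0+\cdots+L_n)=\min\{d,\dim L_0+\cdots+\dim L_n\}$ and dually $\dim(L_0\cap\cdots\cap L_n)=\max\{0,\sum_i\dim L_i-nd\}$ for almost all rotations. This is exactly the kind of statement deferred to Appendix~\ref{sec:genericity}, so I would invoke it. The subtle part is that $\spa(C)$ and $\lin(C)$ are not continuous or generic-invariant for cones in the way they are for subspaces — but the relevant quantity here is: for almost all $(\vct Q_0,\dots,\vct Q_n)$, $\lin\big(\bigcap_i\vct T_i\vct Q_iC_i\big)=\lin(C_0\times\cdots\times C_n)-nd$ when this is positive, and otherwise the intersection is pointed. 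I would prove this by passing to polars: $\spa(\sum_i\vct T_i^\invadj\vct Q_iC_i^\polar)=\sum_i\vct T_i^\invadj\vct Q_i\,\spa(C_i^\polar)$ because the span of a sum of cones is the sum of the spans, and then the generic-position lemma for the subspaces $\spa(C_i^\polar)$ gives $\dim=\min\{d,\sum_i\dim\spa(C_i^\polar)\}=\min\{d,\,nd+d-\lin(C_0\times\cdots\times C_n)\}$, using $\lin(C\times D)=\lin(C)+\lin(D)$ and the polarity identity for each factor. Translating back via $\lin=d-\dim\spa(\cdot^\polar)$ yields the first equation in~\eqref{eq:kinform-ell} (when the right side is positive) and gives lineality $0$ otherwise.

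The second equation in~\eqref{eq:kinform-ell}, the formula for $\ell_0$, then follows formally: since $\vct\ell(C_0\times\cdots\times C_n)$ is a $0/1$ vector with its unique $1$ in position $\lin(C_0\times\cdots\times C_n)$, the sum $\sum_{j=0}^{nd}\ell_j(C_0\times\cdots\times C_n)$ equals $1$ precisely when $\lin(C_0\times\cdots\times C_n)\le nd$, which by the first part is exactly the event that the generic intersection is pointed, i.e. $\ell_0(\bigcap_i\vct T_i\vct Q_iC_i)=1$; and it equals $0$ exactly when $\lin(C_0\times\cdots\times C_n)=nd+k$ for some $k>0$, in which case the first equation already puts the $1$ of $\vct\ell(\bigcap_i\vct T_i\vct Q_iC_i)$ at position $k>0$, forcing $\ell_0=0$. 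So both sides agree. The main obstacle I anticipate is pinning down the genericity claim for \emph{spans of sums of cones} rather than sums of subspaces — one must check that $\spa(\sum_i\vct T_i^\invadj\vct Q_iC_i^\polar)=\sum_i\vct T_i^\invadj\vct Q_i\spa(C_i^\polar)$ holds (this is elementary: $\spa$ of a sum is the sum of $\spa$'s for any cones) and, more delicately, that the generic-position lemma of Appendix~\ref{sec:genericity} applies to the fixed subspaces $\spa(C_i^\polar)$ twisted by $\vct T_i^\invadj\vct Q_i$ with $\vct Q_i$ Haar-random (the $\vct T_i^\invadj$ are harmless since $\vct T_i^\invadj\vct Q_i$ still sweeps out a full-measure family of generic configurations); everything else is bookkeeping with the polarity and product identities already recorded in the excerpt.
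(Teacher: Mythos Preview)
Your proposal is correct, and the overall shape---reduce to a generic-dimension statement about linear subspaces and then read off both equalities in~\eqref{eq:kinform-ell} by case analysis---matches the paper's argument. However, the paper takes a more direct route than your polarity detour. Instead of passing to $(\bigcap_i \vct T_i\vct Q_i C_i)^\polar$ and computing $\dim\spa$ of a sum, it simply observes that the lineality space of an intersection of cones is the intersection of the lineality spaces: setting $L_i:=C_i\cap(-C_i)$, one has
\[
  \bigg(\bigcap_i \vct T_i\vct Q_i C_i\bigg)\cap\bigg(-\bigcap_i \vct T_i\vct Q_i C_i\bigg) = \bigcap_i \vct T_i\vct Q_i L_i,
\]
and then applies the generic-intersection formula $\dim\bigcap_i \vct T_i\vct Q_i L_i = \max\{0,\sum_i\dim L_i - nd\}$ from Lemma~\ref{lem:TQC-alm-sure-generic} directly. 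Your approach and the paper's are dual to one another (intersection of lineality spaces versus sum of spans of polars), and both feed into the same genericity lemma, but the paper's version avoids the polarity identity $\lin(C)=d-\dim\spa(C^\polar)$, the polar-of-intersection formula, and the $\vct T_i\mapsto\vct T_i^\invadj$ bookkeeping entirely---so the ``main obstacle'' you flag never arises. Your handling of the $\ell_0$ case is essentially identical to the paper's.
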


Note that the second equation in~\eqref{eq:kinform-ell} in connection with the observation~\eqref{eq:l_0(C)=f_0(C)=u_0(C)} settles the boundary case in~\eqref{eq:kinform-u}.

\begin{proof}[Proof of Proposition~\ref{prop:kin-form-ell-vect}]
Let $L_i:=C_i\cap (-C_i)$, denote the largest subspace contained in~$C_i$. Then the largest subspace contained in $C:=\bigcap_{i=0}^n \vct T_i\vct Q_i C_i$ is given by $C\cap (-C) = \bigcap_{i=0}^n \vct T_i\vct Q_i L_i$. The dimension of this intersection is almost surely given by $\max\{ 0, k_0 + \cdots + k_n - nd\}$, where $k_i:=\dim(L_i)=\lin(C_i)$, $i=0,\ldots,n$, cf.~Lemma~\ref{lem:TQC-alm-sure-generic}. Hence, if $k>0$, then almost surely $\ell_k\big( \bigcap_{i=0}^n \vct T_i\vct Q_i C_i \big)=1$ iff $nd+k=k_0+\cdots+k_n$ and zero else. The same holds for $\ell_{nd+k}(C_0\times\cdots\times C_n)$, as $\lin(C_0\times \cdots\times C_n)=k_0+\cdots+k_n$. This settles the case $k>0$. One argues analogously in the case $k=0$.
\end{proof}

\subsection{Preliminaries: conic Borel sets and measures}\label{sec:prelim-conmeas}

Given a conic Borel set $M\in\hmB(\IR^d)$, i.e., $\lambda M=M$ for all $\lambda>0$, almost all information about this set is contained in the intersection $M\cap S^{d-1}$ with the unit sphere, except for the information if the origin is contained in the set or not. This observation shows that~$\hmB(\IR^d)$ decomposes disjointly into:
\begin{equation}\label{eq:decomp-hmB(R^d)}
  \hmB(\IR^d) = \{ M\in\hmB(\IR^d)\mid \vct 0\not\in M\} \;\uplus\; \{ M\in\hmB(\IR^d)\mid \vct 0\in M\} ,
\end{equation}
where both parts are equivalent to the Borel algebra on the unit sphere, $\{ M\in\hmB(\IR^d)\mid \vct 0\not\in M\} \equiv \{ M\in\hmB(\IR^d)\mid \vct 0\in M\} \equiv \mB(S^{d-1})$. The following convention turns out to be particularly convenient: for $M\in\hmB(\IR^d)$,
  \[ M_* := M\setminus\{\vct0\} . \]
We denote the embedding of the set of spherical measures into the set of conic measures by
\begin{equation}\label{eq:conic-ext-spher}
  \nu\in\mM(S^{d-1})\mapsto \hat\nu\in\hmM(\IR^d) ,\qquad \hat\nu(M) := \nu(M\cap S^{d-1}) \text{ for } M\in\hmB(\IR^d) .
\end{equation}
Given a conic measure $\mu\in\hmM(\IR^d)$, the decomposition~\eqref{eq:decomp-hmB(R^d)} implies that
  \[ \mu-\mu(\{\vct0\})\Dir = \hat\nu \]
for some spherical measures $\nu\in\mM(S^{d-1})$. This shows that every conic measure can be written uniquely as the sum of a (lifted) spherical measure and a scaled Dirac measure.

\begin{lemma}\label{lem:orth-inv-conic-meas}
Let $\mu\in\hmM(\IR^d)$ be an orthogonal invariant conic measure, i.e., $\mu(\vct QM)=\mu(M)$ for all $M\in\hmB(\IR^d)$, $\vct Q\in O(d)$. Then
\begin{equation}\label{eq:dirac-gaussian-basis}
  \mu = \mu(\{\vct0\})\,\Dir + \mu(\IR^d_*) \gamma_d .
\end{equation}
\end{lemma}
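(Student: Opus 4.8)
The plan is to split the orthogonal invariant measure $\mu$ into its atomic part at the origin and its spherical part, and then show the spherical part is a multiple of the lifted uniform (Lebesgue) measure on $S^{d-1}$, which in turn equals the restriction of $\gamma_d$ to $\IR^d_*$. First I would write, using the decomposition established just before the lemma, $\mu = \mu(\{\vct 0\})\,\Dir + \hat\nu$ for a unique spherical measure $\nu \in \mM(S^{d-1})$. Since $\Dir$ is itself orthogonal invariant and $\mu$ is assumed orthogonal invariant, the difference $\hat\nu = \mu - \mu(\{\vct 0\})\,\Dir$ is orthogonal invariant as a conic measure; unwinding the embedding~\eqref{eq:conic-ext-spher}, this forces $\nu$ to be an orthogonal invariant (signed) Borel measure on $S^{d-1}$, i.e., $\nu(\vct Q A) = \nu(A)$ for all Borel $A \subseteq S^{d-1}$ and all $\vct Q \in O(d)$.

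The key step is then the classical fact that the only $O(d)$-invariant finite (signed) Borel measure on the sphere is a scalar multiple of the uniform probability measure $\sigma_{d-1}$; this is the ``elementary fact that the usual (Lebesgue) volume on the sphere is characterized through orthogonal invariance'' referred to in the introduction. One way I would justify it: decompose $\nu$ into positive and negative parts via the Jordan decomposition; each part is a finite positive $O(d)$-invariant measure, hence by uniqueness of Haar measure on the homogeneous space $S^{d-1} = O(d)/O(d-1)$ equals a nonnegative multiple of $\sigma_{d-1}$; subtracting gives $\nu = c\,\sigma_{d-1}$ for some $c \in \IR$. Evaluating at the whole sphere gives $c = \nu(S^{d-1}) = \hat\nu(\IR^d) = \mu(\IR^d) - \mu(\{\vct 0\}) = \mu(\IR^d_*)$.

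It remains to identify $\widehat{\sigma_{d-1}}$ with $\gamma_d$ restricted to $\IR^d_*$, i.e., to check that $\gamma_d(M) = \sigma_{d-1}(M \cap S^{d-1})$ for every conic Borel set $M$ with $\vct 0 \notin M$. This follows because a standard Gaussian vector $\vct g \in \IR^d$ has $\vct g / \norm{\vct g}$ uniformly distributed on $S^{d-1}$ and almost surely $\vct g \neq \vct 0$, so $\Prob\{\vct g \in M\} = \Prob\{\vct g/\norm{\vct g} \in M \cap S^{d-1}\} = \sigma_{d-1}(M \cap S^{d-1})$; equivalently $\gamma_d - \gamma_d(\{\vct 0\})\Dir = \gamma_d = \widehat{\sigma_{d-1}}$ since $\gamma_d(\{\vct 0\}) = 0$. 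Combining, $\hat\nu = \mu(\IR^d_*)\,\widehat{\sigma_{d-1}} = \mu(\IR^d_*)\,\gamma_d$ as conic measures, and adding back the atom yields $\mu = \mu(\{\vct 0\})\,\Dir + \mu(\IR^d_*)\,\gamma_d$, which is~\eqref{eq:dirac-gaussian-basis}.

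The main obstacle is really just the sphere-uniqueness step, and even that is standard; the only mild subtlety is that $\nu$ is a \emph{signed} measure, so one cannot invoke Haar uniqueness directly but must first pass through the Jordan decomposition (or, alternatively, argue via the Riesz representation theorem that the $O(d)$-invariant functional on $C(S^{d-1})$ is unique up to scaling). Everything else — the decomposition of $\hmB(\IR^d)$, the embedding $\nu \mapsto \hat\nu$, and the Gaussian-to-uniform reduction — is either already set up in the excerpt or an immediate property of the standard Gaussian.
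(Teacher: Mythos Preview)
Your proof is correct and follows essentially the same approach as the paper: decompose $\mu$ into its Dirac part and a lifted spherical measure $\hat\nu$, deduce that $\nu$ is $O(d)$-invariant, and invoke the uniqueness of the invariant measure on the sphere to conclude $\hat\nu = \mu(\IR^d_*)\gamma_d$. You supply more detail than the paper---handling the signed case via the Jordan decomposition and spelling out the identification $\widehat{\sigma_{d-1}} = \gamma_d$---but the argument is the same.
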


\begin{proof}
If $\mu\in\hmM(\IR^d)$ then it can be written as $\mu=\mu(\{\vct0\})\Dir + \hat\nu$ for some spherical measure $\nu\in\mM(S^{d-1})$. The spherical measure~$\nu$ is orthogonal invariant, since
  \[ \nu(\vct Q\bar M) = \hat\nu(\vct QM) = \mu(\vct QM) = \mu(M) = \hat\nu(M) = \nu(\bar M) \]
for any spherical Borel set $\bar M\in\mB(S^{d-1})$ and the corresponding conic borel set $M=\{\lambda\vct x\mid \lambda>0 , \vct x\in\bar M\}$. The Lebesgue measure is up to scaling the only orthogonal invariant Borel measure on the sphere, see for example~\cite[Ch.~13]{SW:08}. This implies $\hat\nu = \mu(\IR^d_*)\gamma_d$.
\end{proof}

\subsection{Polyhedral and curvature measures}\label{sec:polyh-curv-meas}

Before considering the localizations of the $\vct u$- and $\vct v$-vectors, we make the Dirac measure~$\Dir$ supported at the origin cone dependent by defining for a function $h\colon\P(\IR^d)\to\IR$,
\begin{align}\label{eq:def-Dir_h}
  \Dir_h & \colon\P(\IR^d)\times \hmB(\IR^d)\to\IR , & \Dir_h(C,M) & := h(C)\,\Dir(M) .
\end{align}
An important special case is obtained by choosing for~$h$ the indicator function for the lineality of~$C$, which gives rise to the following definition.

\begin{definition}
For $0\leq k\leq d$ we define the $k$th \emph{lineality measure} $\Lin_k\colon\P(\IR^d)\times \hmB(\IR^d)\to\IR$,
\begin{align}\label{eq:def-Lin_k}
   \Lin_k(C,M) & := \Dir_{h_k}(C,M) \quad\text{with}\quad h_k(C) := \begin{cases} 1 & \text{if } \lin(C)=k \\ 0 & \text{if } \lin(C)\neq k . \end{cases}
\end{align}
\end{definition}

The lineality measures localize the $\ell$-vector, as $\Lin_k(C,\IR^d)=\ell_k(C)$. In fact, we even have $\Lin_k(C,M)=\ell_k(C)$ for all $M\in\hmB(\IR^d)$ such that $\vct0\in M$ (and $\Lin_k(C,M)=0$ if $\vct0\not\in M$). Due to this close connection, the lineality measures also inherit a number of easily seen properties, which we spare ourselves from enumerating here.
Finally, note that the lineality measures decompose the Dirac measure,
  \[ \Dir=\Lin_0(C,\cdot) + \cdots + \Lin_d(C,\cdot) . \]

The $\vct u$- and $\vct v$-vector are localized by the polyhedral measures and the curvature measures, respectively, and these are given by
\begin{align*}
   \Psi_k(C,M) & = \sum_{L\in\mL_k(C)} \gamma_L(C\cap L\cap M) ,
 & \Phi_k(C,M) & = \sum_{L\in\mL_k(C)} \gamma_L(C\cap L\cap M) \, \gamma_{L^\bot}(C^\polar\cap L^\bot) ,
\end{align*}
where $C\in\P(\IR^d)$ and $0\leq k\leq d$.

\begin{remark}
For $k=d$ the polyhedral measure coincides with the curvature measure, $\Psi_d(C,M)=\Phi_d(C,M)=\gamma_d(C\cap M)$. Furthermore, if $C\in\P(\IR^d)$ has nonempty interior and is not the whole space~$\IR^d$, then $\Phi_{d-1}(C,\cdot)=\frac{1}{2}\Psi_{d-1}(C,\cdot)$. At the other extreme, $2\Psi_1(C,M)$ counts the number of extreme rays of~$C$ lying in~$M$; the $0$th polyhedral measure coincides with the $0$th lineality measure, $\Psi_0=\Lin_0$, and the $0$th curvature measure is given by the Dirac measure scaled by the $0$th intrinsic volume, $\Phi_0=\Dir_{v_0}$, cf.~\eqref{eq:def-Dir_h}.
\end{remark}

These measures satisfy some fundamental properties, which, as we will show in Section~\ref{sec:char-polyh}, actually characterize these measures. We list these properties in the following proposition.

\begin{proposition}\label{prop:props-polyh-curv-meas}
Let $C\in\P(\IR^d)$ and $0\leq k\leq d$.
\begin{enumerate}
  \item \emph{concentration:} for all $M\in\hmB(\IR^d)$,
  \begin{align*}
     \Psi_k(C,M) & = \Psi_k\big(C,M\cap \Skel_k(C)\big) , & \Phi_k(C,M) & = \Phi_k\big(C,M\cap \Skel_k(C)\big) .
  \end{align*}
  \item \emph{orthogonal invariance:} for all $\vct Q\in O(d)$ and all $M\in\hmB(\IR^d)$,
  \begin{align*}
     \Psi_k(\vct Q C,\vct Q M) & = \Psi_k(C,M) , & \Phi_k(\vct Q C,\vct Q M) & = \Phi_k(C,M) .
  \end{align*}
  \item \emph{locality:} for all $D\in\P(\IR^d)$, $M\in\hmB(\IR^d)$, and all open sets $B\in\hmB(\IR^d)$,
  \begin{align*}
     \Psi_k(C,M) & = \Psi_k(D,M) , & \text{if } \;\; \Skel_k(C)\cap M & = \Skel_k(D)\cap M ,
  \\ \Phi_k(C,B) & = \Phi_k(D,B) , & \text{if } \;\; C\cap B & = D\cap B .
  \end{align*}
  \item \emph{product rule:} for all $D\in\P(\IR^e)$, $M\in\hmB(\IR^d)$, $N\in\hmB(\IR^e)$,
  \begin{align*}
     \Psi_k(C\times D,M\times N) & = \sum_{i+j=k} \Psi_i(C,M)\;\Psi_j(D,N) ,
  \\ \Phi_k(C\times D,M\times N) & = \sum_{i+j=k} \Phi_i(C,M)\;\Phi_j(D,N) ,
  \end{align*}
  \item \emph{additivity:} for all $D\in\P(\IR^d)$ such that $C+D = C\cup D$ and all $M\in\hmB(\IR^d)$,
  \begin{align*}
     \Phi_k(C+D,M) + \Phi_k(C\cap D,M) & = \Phi_k(C,M) + \Phi_k(D,M) .
  \end{align*}
  \item \emph{weak continuity:} for all open sets $B\in\hmB(\IR^d)$, 
  \begin{align*}
     \liminf_i \; \Phi_k(C_i,B) & \geq \Phi_k(C,B) , & \text{if } \;\; \big\{C_i\mid i\in\IN\big\}\subseteq\P(\IR^d) \text{ such that } \lim_{i\to\infty}C_i & = C .
  \end{align*}
\end{enumerate}
\end{proposition}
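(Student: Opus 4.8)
The plan is to treat the six items separately; properties~(1),~(2) and~(4) follow by direct manipulation of the defining sums, whereas~(3),~(5) and~(6) carry the geometric content, for which the probabilistic description~\eqref{eq:def-Phi-vect-Prob} is essential. For concentration, note that for $L\in\mL_k(C)$ the face $C\cap L$ is full-dimensional in the $k$-plane~$L$, so its relative boundary is $\gamma_L$-null; hence $\gamma_L(C\cap L\cap M)=\gamma_L(\inter_L(C\cap L)\cap M)$, and replacing $M$ by $M\cap\Skel_k(C)$ changes nothing since $\inter_L(C\cap L)\subseteq\Skel_k(C)$. This gives the concentration identity for $\Psi_k$ termwise, and for $\Phi_k$ after carrying along the $M$-free factor $\gamma_{L^\bot}(C^\polar\cap L^\bot)$. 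Orthogonal invariance is a reindexing, using $\mL_k(\vct QC)=\{\vct QL:L\in\mL_k(C)\}$, $(\vct QC)^\polar=\vct QC^\polar$, $(\vct QL)^\bot=\vct QL^\bot$, and $\gamma_{\vct QL}(\vct Q\,\cdot)=\gamma_L(\cdot)$. For the product rule one combines the facial product formula~\eqref{eq:L_k(CxD)=...}, the factorization $\gamma_{L_0\times L_1}=\gamma_{L_0}\otimes\gamma_{L_1}$, the identity $(L_0\times L_1)^\bot=L_0^\bot\times L_1^\bot$, and $(C\times D)^\polar=C^\polar\times D^\polar$, and expands the resulting double sum over $i+j=k$.

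\emph{Locality.} For $\Psi_k$, rewrite $\Psi_k(C,M)=\sum_{L\in\mL_k(C)}\gamma_L(\inter_L(C\cap L)\cap S)$ with $S:=\Skel_k(C)\cap M=\Skel_k(D)\cap M$, and similarly for $D$. Since two distinct $k$-dimensional subspaces meet in a subspace of dimension $<k$, which is $\gamma_L$-null in each, only the common spans $L\in\mL_k(C)\cap\mL_k(D)$ contribute, and for such $L$ the same null-set argument applied to~$D$ gives $\gamma_L(\inter_L(C\cap L)\cap S)=\gamma_L(\inter_L(C\cap L)\cap\inter_L(D\cap L)\cap S)=\gamma_L(\inter_L(D\cap L)\cap S)$; summing yields $\Psi_k(C,M)=\Psi_k(D,M)$. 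For $\Phi_k$, use~\eqref{eq:def-Phi-vect-Prob}: if $C\cap B=D\cap B$ with $B$ open, then $C$ and $D$ agree on a neighbourhood of each $p\in B$, so the tangent cones, and hence the normal cones $N_C(p)=N_D(p)$, agree there. Thus whenever $\Pi_C(\vct g)\in B$ one has $\Pi_C(\vct g)\in C\cap B=D\cap B$ and $\vct g-\Pi_C(\vct g)\in N_C(\Pi_C(\vct g))=N_D(\Pi_C(\vct g))$, so $\Pi_D(\vct g)=\Pi_C(\vct g)$; and since the span of the smallest face at that point is $\lin T_C(p)=\lin T_D(p)$, it lies in $\Skel_k(C)$ iff it lies in $\Skel_k(D)$. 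Hence the two events coincide and $\Phi_k(C,B)=\Phi_k(D,B)$.

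\emph{Additivity.} Working with~\eqref{eq:def-Phi-vect-Prob}, I would prove the integrand identity pointwise for $\gamma_d$-a.e.~$\vct g$. The first ingredient is the elementary fact that when $E:=C\cup D$ is convex, the nearest point of $E$ to any~$x$ is whichever of $\Pi_C(x),\Pi_D(x)$ is closer, while the other one automatically lies in $C\cap D$ and equals $\Pi_{C\cap D}(x)$ --- the last claim by a short segment argument: the norm is convex along $[\Pi_C(x),\Pi_D(x)]\subseteq E$, and its strict convexity forces the point where this segment leaves, say,~$C$ to be the farther projection. So the unordered pair $\{\Pi_{C+D}(\vct g),\Pi_{C\cap D}(\vct g)\}$ equals $\{\Pi_C(\vct g),\Pi_D(\vct g)\}$, and everything reduces to matching the dimensions of the smallest faces at these two points. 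The second ingredient is the standard fact that, unless one of $C,D$ contains the other (where additivity is trivial), $E$ is cut by a hyperplane $H$ through the origin into $C=E\cap H^-$, $D=E\cap H^+$, $C\cap D=E\cap H$: for $p\in C\setminus H$ a neighbourhood of $p$ in $E$ is a neighbourhood in $C$, so the $E$- and $C$-terms agree, while for $p\in C\cap D=E\cap H$ the identity $\lin\big(T_E(p)\cap H^\pm\big)=\lin T_E(p)\cap H$ shows that the smallest faces of $C$,~$D$ and $C\cap D$ at $p$ all have dimension $\dim(\lin T_E(p)\cap H)$, that of $E$ having dimension $\dim\lin T_E(p)$. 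Substituting, the four-term identity holds at every~$\vct g$ except when $\Pi_{C+D}(\vct g)$ falls in the relative interior of a face of $E$ whose span crosses~$H$ and has dimension $k$ or $k+1$; for such~$\vct g$ the projection point is confined to a hyperplane section of that relative interior, so the exceptional set has dimension $<d$ and is $\gamma_d$-null. Integrating gives additivity; the care here lies in the null-set bookkeeping and the appeal to the hyperplane separation.

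\emph{Weak continuity.} Again via~\eqref{eq:def-Phi-vect-Prob} and Fatou's lemma. Metric projection onto closed convex cones is continuous under conic Hausdorff convergence (equivalently $C_i^\polar\to C^\polar$, with $\Pi_{C_i}(\vct g)=\vct g-\Pi_{C_i^\polar}(\vct g)$ by Moreau's decomposition), so $\Pi_{C_i}(\vct g)\to\Pi_C(\vct g)$ for every~$\vct g$; it therefore suffices to show that for $\gamma_d$-a.e.~$\vct g$ with $\Pi_C(\vct g)\in\Skel_k(C)\cap B$ one has $\Pi_{C_i}(\vct g)\in\Skel_k(C_i)\cap B$ for all large~$i$, and Fatou then delivers $\liminf_i\Phi_k(C_i,B)\ge\Phi_k(C,B)$. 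That $\Pi_{C_i}(\vct g)\in B$ eventually is immediate from openness of~$B$; the substantive point is that $\Pi_{C_i}(\vct g)$ eventually lies in a $k$-dimensional face of~$C_i$. Here I would invoke genericity: for $\gamma_d$-a.e.~$\vct g$, writing $p=\Pi_C(\vct g)\in\relint F$ with $\dim F=k$ and $L=\spa F$, the vector $\vct g-p$ lies in the interior, relative to~$L^\bot$, of the $(d-k)$-dimensional cone $N_C(p)=C^\polar\cap L^\bot$ --- the exceptional set being a finite union of sets of dimension $<d$. This strict interiority, transported along $C_i^\polar\to C^\polar$, keeps~$\vct g$ out of the asymptotically thin sets of points projecting onto faces of $C_i$ of dimension $\ne k$, forcing the smallest face of $C_i$ at $\Pi_{C_i}(\vct g)$ to have dimension exactly~$k$ for large~$i$. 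This local face-dimension stability under Hausdorff perturbation is the main obstacle of the whole proposition --- face dimension is discontinuous in general, and it is exactly this phenomenon that is responsible for only the lower semicontinuity surviving in the limit; alternatively one can deduce weak continuity from the conic Steiner formula discussed in the appendix.
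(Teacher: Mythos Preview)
Your treatment of items~(1), (2) and~(4) is correct and matches the paper's, which simply says these ``follow directly from the definitions'' and from~\eqref{eq:L_k(CxD)=...}/\eqref{eq:Skel_k(CxD)=...}. Your locality argument~(3) is also correct; the paper organises the $\Psi_k$-case slightly differently (observing that if $\inter_L(L\cap\Skel_k(C)\cap M)\neq\emptyset$ for some $L\in\mL_k(C)$ then necessarily $L\in\mL_k(D)$, and then matching the sums term by term), but your null-set version is equivalent. For $\Phi_k$ both arguments coincide.

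There is, however, a genuine gap in your additivity argument~(5). Your ``first ingredient'', the projection pairing $\{\Pi_{C\cup D}(x),\Pi_{C\cap D}(x)\}=\{\Pi_C(x),\Pi_D(x)\}$, is correct (and your segment/strict-convexity sketch can be made rigorous). But your ``second ingredient'' --- that whenever $C\cup D$ is convex and neither cone contains the other there is a hyperplane~$H$ with $C=(C\cup D)\cap H^-$, $D=(C\cup D)\cap H^+$ --- is \emph{false}. A two-dimensional counterexample: take $C=\{0\le\theta\le\pi/2\}$ and $D=\{\pi/4\le\theta\le 3\pi/4\}$. Then $C\cup D=\{0\le\theta\le 3\pi/4\}$ is convex, neither cone contains the other, yet $C\cap D=\{\pi/4\le\theta\le\pi/2\}$ is two-dimensional and no line through the origin separates $C$ from $D$ inside $C\cup D$. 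Your dimension-matching step, as written, rests on this hyperplane structure and therefore does not go through. (A repair is possible: when $\|x-\Pi_C(x)\|<\|x-\Pi_D(x)\|$ one checks $\Pi_C(x)\notin D$, hence $E$ and $C$ agree in a neighbourhood of that point; the analogous local statement for $D$ versus $C\cap D$ at $\Pi_D(x)$ requires a separate argument, and the tie case needs a null-set analysis. None of this uses a global hyperplane.)

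For comparison, the paper does \emph{not} prove~(5) or~(6) directly: it defers both to the corresponding properties of the support measures (Proposition~\ref{prop:props-supp-meas}), which in turn are obtained from the conic Steiner formula in Appendix~\ref{sec:Steiner-form}, with full details delegated to~\cite[Sec.~6.5]{SW:08}. Your alternative suggestion for~(6), to invoke the Steiner formula, is thus exactly the route the paper takes; your Fatou/face-stability sketch is plausible but, as you yourself note, the ``local face-dimension stability under Hausdorff perturbation'' is the crux and would need a careful argument to stand on its own.
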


\begin{remark}
If $C\cap B=D\cap B$ for some open conic set $B\in\hmB(\IR^d)$, then $\Skel_k(C)\cap B = \Skel_k(D)\cap B$ for all $k=0,\ldots,d$. So the locality property of the polyhedral measures is stronger than the locality property of the curvature measures. On the other hand, except in the case $k\in\{0,d\}$, the polyhedral measures do not satisfy the additivity property, which is satisfied by the curvature measures, and the polyhedral measures are also not weakly continuous except in the cases $k\in\{0,d-1,d\}$.
\end{remark}

\begin{proof}
Properties~(1) and~(2) follow directly from the definitions of~$\Psi_k$ and~$\Phi_k$. As for the locality, assume first that $\Skel_k(C)\cap M = \Skel_k(D)\cap M$ for $k=0,\ldots,d$. In this case we obtain for every $L\in\mL_k(C)$: if $\inter_L(L\cap\Skel_k(C)\cap M)=\inter_L(L\cap\Skel_k(D)\cap M)\neq\emptyset$, then $L\in\mL_k(D)$; and the same holds of course with $C$ and $D$ exchanged. Therefore,
\begin{align*}
   \Psi_k(C,M) & = \sum_{\substack{L\in\mL_k(C)\\ L\cap \Skel_k(C)\cap M\neq\emptyset}} \gamma_L(L\cap \Skel_k(C)\cap M) = \sum_{\substack{L\in\mL_k(D)\\ L\cap \Skel_k(D)\cap M\neq\emptyset}} \gamma_L(L\cap \Skel_k(D)\cap M) = \Psi_k(D,M) .
\end{align*}
If $C\cap B=D\cap B$ for some open conic set $B\in\hmB(\IR^d)$, then $\Skel_k(C)\cap B = \Skel_k(D)\cap B$ for all $k=0,\ldots,d$. Furthermore, if $\Pi_C(\vct x)\in \Skel_k(C)\cap B=\Skel_k(D)\cap B$, then $\Pi_C(\vct x)=\Pi_D(\vct x)$; and the same holds with $C$~and $D$ exchanged. Therefore, using~\eqref{eq:def-Phi-vect-Prob},
\begin{align*}
   \Phi_k(C,B) & = \Prob\big\{\Pi_C(\vct g)\in\Skel_k(C)\cap B\big\} = \Prob\big\{\Pi_D(\vct g)\in\Skel_k(D)\cap B\big\} = \Phi_k(D,B) .
\end{align*}
The product rules follow directly from~\eqref{eq:L_k(CxD)=...} and~\eqref{eq:Skel_k(CxD)=...}.
The additivity and weak continuity of the curvature measures are special cases of the corresponding properties of the support measures, cf.~Proposition~\ref{prop:props-supp-meas}.
\end{proof}

\subsection{Characterization}\label{sec:char-polyh}

The following two theorems show that the properties of the polyhedral measures and the curvature measures listed in Proposition~\ref{prop:props-polyh-curv-meas} (basically) characterize these measures. In this section we will only prove the characterization theorem for the polyhedral measures, which seems to be new, and which may be regarded as the conic version of the characterization theorem for the support measures, which we will provide in Section~\ref{sec:char-supp-meas}. The characterization of the curvature measures is due to Schneider~\cite{Sch:78} and its proof, which can be found (in its spherical version) in~\cite[Thms.~6.5.4 \&~14.4.7]{SW:08}, is considerably more involved.

\begin{theorem}[Characterization of polyhedral measures]\label{thm:conic-char}
Let $\psi\colon\P(\IR^d)\times\hmB(\IR^d)\to\IR$ be such that for all $C,D\in\P(\IR^d)$, $M\in\hmB(\IR^d)$, $\vct Q\in O(d)$:
\begin{enumerate}\setcounter{enumi}{-1}
  \item $\psi(C,\cdot)\in\hmM(\IR^d)$,
  \item $\psi(C,M)=\psi(C,M\cap C)$,
  \item $\psi(\vct Q C,\vct Q M)=\psi(C,M)$,
  \item $\psi(C,M)=\psi(D,M)$ if $\Skel_k(C)\cap M=\Skel_k(D)\cap M$ for $k=0,\ldots,d$.
\end{enumerate}
Then $\psi$ is a linear combination of~$\Lin_d,\ldots,\Lin_1,\Lin_0=\Psi_0,\Psi_1,\ldots,\Psi_d$. In this case
\begin{equation}\label{eq:exp-psi-Psi}
  \psi = \sum_{k=0}^d \psi(\Lk,\{\vct0\})\,\Lin_k + \sum_{k=1}^d \psi(\Lk,\IR^d_*)\,\Psi_k ,
\end{equation}
where $\Lk\subseteq\IR^d$ denotes a $k$-dimensional subspace.
\end{theorem}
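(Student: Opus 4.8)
The plan is to show that any $\psi$ satisfying (0)--(3) is completely determined by its values $\psi(L^{(k)},\{\vct0\})$ and $\psi(L^{(k)},\IR^d_*)$ on the coordinate subspaces, and that these values are exactly the coefficients appearing in~\eqref{eq:exp-psi-Psi}. The backbone is the facial decomposition~\eqref{eq:fac-decomp-C}: for a fixed cone $C$, I will use properties (0) (so that $\psi(C,\cdot)$ is a genuine conic measure and we may split off its atom at the origin as in Section~\ref{sec:prelim-conmeas}), (1) (so that $\psi(C,\cdot)$ is concentrated on~$C$), and additivity of the measure to write
\[
   \psi(C,M) = \psi(C,\{\vct0\})\,\Dir(M) + \sum_{k=1}^d \psi\big(C,M\cap\Skel_k(C)\big),
\]
since $C_* = \Skel_1(C)\,\uplus\cdots\uplus\,\Skel_d(C)$ is a disjoint decomposition of $C$ minus the origin into conic Borel sets. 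It thus suffices to identify each term $\psi(C,M\cap\Skel_k(C))$ with $\psi(L^{(k)},\IR^d_*)\,\Psi_k(C,M)$, and the atomic term $\psi(C,\{\vct0\})$ with $\sum_k \psi(L^{(k)},\{\vct0\})\,\Lin_k(C,\{\vct0\})$.

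For the atomic part: the locality property (3) with $M=\{\vct0\}$ says $\psi(C,\{\vct0\})=\psi(D,\{\vct0\})$ whenever $\Skel_k(C)\cap\{\vct0\}=\Skel_k(D)\cap\{\vct0\}$ for all $k$. Since $\vct0\in\Skel_k(C)$ precisely when $\lin(C)=k$, this means $\psi(C,\{\vct0\})$ depends only on $\lin(C)$; combined with orthogonal invariance (2), which lets me replace $C$ by the coordinate subspace $L^{(k)}$ when $\lin(C)=k$ (both have the same value, since any cone with lineality $k$ has the same $0$-skeleton intersection with $\{\vct 0\}$ as $L^{(k)}$), I get $\psi(C,\{\vct0\}) = \sum_k \psi(L^{(k)},\{\vct0\})\,\Lin_k(C,\{\vct0\})$, exactly the first sum in~\eqref{eq:exp-psi-Psi}. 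For the $k$th skeletal piece: fix $L\in\mL_k(C)$ and work inside the relative topology of $L$. By locality (3) again, near a point of $\inter_L(C\cap L)$ the cone $C$ ``looks like'' the subspace $L$, so $\psi(C,\cdot)$ restricted to $\inter_L(C\cap L)$ agrees with $\psi(L,\cdot)$ restricted there; and by orthogonal invariance (2), $\psi(L,\cdot) = \psi(L^{(k)},\cdot)$ up to the rotation carrying $L^{(k)}$ onto $L$. The measure $\psi(L^{(k)},\cdot)$ on the conic $\sigma$-algebra of $\IR^d$, being orthogonal invariant under the stabilizer $O(k)\times O(d-k)$ of $L^{(k)}$ and concentrated on $L^{(k)}$ by (1), restricts on $L^{(k)}_*$ to an $O(k)$-invariant spherical measure on $S^{k-1}$, hence is a scalar multiple of $\gamma_{L^{(k)}}$ by the uniqueness of Haar measure (as in Lemma~\ref{lem:orth-inv-conic-meas}); the scalar is $\psi(L^{(k)},\IR^d_*)$. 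Summing the contributions of all $L\in\mL_k(C)$ gives $\psi(L^{(k)},\IR^d_*)\sum_{L\in\mL_k(C)}\gamma_L(C\cap L\cap M) = \psi(L^{(k)},\IR^d_*)\,\Psi_k(C,M)$.

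The main obstacle is making the locality argument for the skeletal pieces fully rigorous: property (3) is stated with an arbitrary conic Borel set $M$ (not just open sets, unlike the curvature-measure version), which is exactly what makes the polyhedral case cleaner, but one still has to verify carefully that for $L\in\mL_k(C)$ the set $\inter_L(C\cap L)$ is a relatively open conic set on which $C$ and the subspace $\langle L\rangle$ have matching $k$-skeletons, so that (3) applies and pins down $\psi(C,\cdot)$ there in terms of $\psi(\langle L\rangle,\cdot)$ — and then that these subspace measures, transported by (2) to the reference subspace $L^{(k)}$, all carry the same scalar $\psi(L^{(k)},\IR^d_*)$. A secondary technical point is that $\psi$ is a signed measure, so ``concentrated on'' and the disjoint-union splitting must be handled via finite additivity and the decomposition~\eqref{eq:decomp-hmB(R^d)} rather than any positivity; but this is routine once the structure above is in place. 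Finally, the converse direction — that every such linear combination does satisfy (0)--(3) — is immediate from Proposition~\ref{prop:props-polyh-curv-meas} and the corresponding (trivial) properties of the lineality measures.
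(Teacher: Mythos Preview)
Your proposal is correct and follows essentially the same approach as the paper: both use the facial decomposition~\eqref{eq:fac-decomp-C} together with locality~(3) to replace $\psi(C,\cdot)$ by $\psi(L,\cdot)$ on each piece $\inter_L(C\cap L)$, and then invoke orthogonal invariance~(2) and Lemma~\ref{lem:orth-inv-conic-meas} to identify $\psi(L,\cdot)$ as $\psi(L^{(k)},\{\vct0\})\,\Dir + \psi(L^{(k)},\IR^d_*)\,\gamma_L$. The paper's version is marginally more streamlined in that it does not separate the atom at the origin beforehand but lets it fall out of the single computation $\psi(C,M)=\sum_{k}\sum_{L\in\mL_k(C)}\psi(L,M\cap\inter_L(C\cap L))$; your preliminary splitting works too, though note that your displayed decomposition should read $\psi(C,M\cap(\Skel_k(C))_*)$ rather than $\psi(C,M\cap\Skel_k(C))$ to avoid double-counting the origin when $\lin(C)\geq 1$.
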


\begin{proof}
Let $\psi\colon\P(\IR^d)\times\hmB(\IR^d)\to\IR$ satisfy the assumptions (0)--(3), i.e., $\psi(C,\cdot)$ is a conic measure, which is concentrated on~$C$, $\psi$~is invariant under (simultaneous) orthogonal transformations, and $\psi(C,M)=\psi(D,M)$, if the skeletons of the cones~$C,D$ coincide in~$M$.

If $L\subseteq\IR^d$ is a linear subspace, then $\psi(L,\cdot)$ is an orthogonal invariant conic measure on~$L$, so that by Lemma~\ref{lem:orth-inv-conic-meas} it is a linear combination of the Dirac measure and the Gaussian measure~$\gamma_L$. Moreover, orthogonal invariance implies that, for $k=\dim L>0$,
\begin{equation}\label{eq:psi(L,.)=}
  \psi(L,\cdot) = \psi(\Lk,\{\vct0\})\,\Dir + \psi(\Lk,\IR^d_*) \gamma_L ,
\end{equation}
where $\Lk\subseteq\IR^d$ some $k$-dimensional linear subspace. For $C\in\P(\IR^d)$ we will use the facial decomposition~\eqref{eq:fac-decomp-C}. For $M\in\hmB(\IR^d)$, we obtain
\begin{align*}
   \psi(C,M) & \stackrel{(1)}{=} \psi(C,M\cap C) \stackrel{\eqref{eq:fac-decomp-C}}{=} \sum_{k=0}^d \; \sum_{L\in\mL_k(C)} \psi(C,M\cap \inter_L(C\cap L))
\\ & \stackrel{(3)}{=} \sum_{k=0}^d \sum_{L\in\mL_k(C)} \psi(L,M\cap \inter_L(C\cap L))
\\ & \stackrel{\eqref{eq:psi(L,.)=}}{=} \sum_{k=0}^d \psi(\Lk,\{\vct0\}) \sum_{L\in\mL_k(C)} \Dir(M\cap \inter_L(C\cap L)) + \sum_{k=1}^d \psi(\Lk,\IR^d_*) \sum_{L\in\mL_k(C)} \gamma_L(M\cap C\cap L)
\\ & \stackrel{\eqref{eq:def-Lin_k}/\eqref{eq:def-Psi_k}}{=} \sum_{k=0}^d \psi(\Lk,\{\vct0\})\, \Lin_k(C,M) + \sum_{k=1}^d \psi(\Lk,\IR^d_*)\, \Psi_k(C,M) . \qedhere
\end{align*}
\end{proof}

\begin{theorem}[Characterization of curvature measures]\label{thm:conic-char-curv}
Let $\psi\colon\P(\IR^d)\times\hmB(\IR^d)\to\IR_+$ be such that for all $C,D\in\P(\IR^d)$, $M\in\hmB(\IR^d)$, $\vct Q\in O(d)$, $B\in\hmB(\IR^d)$ open:
\begin{enumerate}\setcounter{enumi}{-1}
  \item $\psi(C,\cdot)\in\hmM(\IR^d)$,
  \item $\psi(C,M)=\psi(C,M\cap C)$,
  \item $\psi(\vct Q C,\vct Q M)=\psi(C,M)$,
  \item $\psi(C,B)=\psi(D,B)$ if $C\cap B=D\cap B$,
  \item $\psi(C+D,M) + \psi(C\cap D,M) = \psi(C,M) + \psi(D,M)$, if $C+D=C\cup D$.
\end{enumerate}
Then $\psi$ is a nonnegative linear combination of $\Phi_1,\ldots,\Phi_d$ and $\Dir_h$, where $h\colon\P(\IR^d)\to\IR_+$ is given by $h(C)=\psi(C,\{\vct0\})$. In this case
\begin{equation}\label{eq:exp-psi-Phi}
  \psi = \Dir_h + \sum_{k=1}^d \psi(\Lk,\IR^d_*)\,\Phi_k ,
\end{equation}
where $\Lk\subseteq\IR^d$ denotes a $k$-dimensional subspace.
\end{theorem}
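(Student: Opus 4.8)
The plan is to deduce this from the characterization theorem for the support measures proved in Section~\ref{sec:char-supp-meas}, exploiting that the curvature measures are the ``$M'=\IR^d$ marginals'' of the support measures, $\Phi_k(C,M)=\Theta_k(C,M\times\IR^d)$, cf.~\eqref{eq:def-Theta_k-dirprod}. Concretely, given $\psi$ satisfying (0)--(4), I would lift it to a biconic functional by slicing the second component at the origin: for $C\in\P(\IR^d)$ and $\M\in\hmB(\IR^d,\IR^d)$ put $\M_{(0)}:=\{\vct x\in\IR^d\mid(\vct x,\vct 0)\in\M\}\in\hmB(\IR^d)$ and define $\theta(C,\M):=\psi(C,\M_{(0)})$. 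The operation $\M\mapsto\M_{(0)}$ is the preimage under the continuous \emph{injection} $\vct x\mapsto(\vct x,\vct0)$, so it commutes with countable disjoint unions, takes open biconic sets to open conic sets, and satisfies $\M_{(0)}=M$ whenever $\M=M\times\IR^d$.

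The first step is to verify that $\theta$ satisfies the hypotheses of the support-measure characterization. Property~(0), that $\theta(C,\cdot)$ is a biconic measure, follows since slicing preserves $\sigma$-additivity; orthogonal invariance follows from $((\vct Q,\vct Q)\M)_{(0)}=\vct Q(\M_{(0)})$; additivity is immediate from that of $\psi$, since the slicing does not involve the cone; and locality transfers because slices of open sets are open. For the concentration property one uses Moreau's decomposition: if $\vct x\in C$ then $\Pi_{C^\polar}(\vct x)=\vct0$, so $\psi(C,\cdot)$ being concentrated on $C$ forces $\theta(C,\cdot)$ to be concentrated on $\{(\vct x,\vct0)\mid\vct x\in C\}$, which lies in the ``normal-bundle'' set carrying the support measures.

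The second step is to apply the support-measure characterization to $\theta$, which expresses it as a Dirac part plus a linear combination $\sum_{k=1}^d c_k\,\Theta_k$, where the coefficients are read off on subspaces from Lemma~\ref{lem:orth-inv-conic-meas} exactly as in the proof of Theorem~\ref{thm:conic-char}, giving $c_k=\psi(\Lk,\IR^d_*)$. Restricting to $\M=M\times\IR^d$ and using $\Theta_k(C,M\times\IR^d)=\Phi_k(C,M)$ together with $\Dir(M\times\IR^d)=\Dir(M)$ then yields $\psi=\Dir_h+\sum_{k=1}^d\psi(\Lk,\IR^d_*)\,\Phi_k$. The identification $h(C)=\psi(C,\{\vct0\})$ comes from evaluating at $M=\{\vct0\}$ and noting that $\Phi_k(C,\{\vct0\})$ can be nonzero only when $\lin(C)=k$, in which case $C^\polar$ lies in a proper subspace and hence $\gamma_d(C^\polar)=0$, so $\Phi_k(C,\{\vct0\})=0$ for every $k\geq1$; nonnegativity of the coefficients $\psi(\Lk,\IR^d_*)$ is inherited from $\psi\geq0$.

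The main obstacle is the interface with Section~\ref{sec:char-supp-meas}: that characterization is proved only later, so the argument above is strictly a forward reference, and one must match its (somewhat differently phrased) concentration and locality hypotheses against the properties verified for $\theta$ here — routine, but to be spelled out. A self-contained alternative is Schneider's original argument~\cite{Sch:78,SW:08}: peel off the Dirac part, use Lemma~\ref{lem:orth-inv-conic-meas} to fix the coefficients on subspaces, and show that the residual functional $\psi'=\psi-\Dir_h-\sum_k\psi(\Lk,\IR^d_*)\Phi_k$, which still satisfies (0)--(4) and vanishes on every linear subspace, must vanish identically. This last step — reducing full-dimensional cones to simplicial ones through a simplicial subdivision and inclusion--exclusion from additivity, after disposing of lower-dimensional cones by induction on the ambient dimension (where orthogonal invariance lets one work inside the span) — is the genuinely involved part, which is why the direct route is ``considerably more involved'' than the proof of Theorem~\ref{thm:conic-char}.
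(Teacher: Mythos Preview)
The paper does not give its own proof of this theorem; it states the result and refers to Schneider~\cite{Sch:78} and~\cite[Thms.~6.5.4 \&~14.4.7]{SW:08}, remarking that the argument is ``considerably more involved'' than the one for the polyhedral measures. Your second sketch (peel off the Dirac part, fix coefficients on subspaces via Lemma~\ref{lem:orth-inv-conic-meas}, kill the residual using additivity and simplicial subdivision) is essentially that route, so there is nothing to compare on that side.

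Your primary approach, however, has a real gap: the lifted functional $\theta(C,\M)=\psi(C,\M_{(0)})$ does \emph{not} in general satisfy the locality hypothesis~(3) of Theorem~\ref{thm:biconic-char}. That hypothesis requires $\theta(C,\M)=\theta(D,\M)$ whenever $\BL(C)\cap\M=\BL(D)\cap\M$ for \emph{every} biconic Borel set~$\M$, whereas the locality you have for~$\psi$ is only assumed on \emph{open} conic sets. Slicing $\BL(C)\cap\M=\BL(D)\cap\M$ at $\vct z=\vct0$ gives merely $C\cap\M_{(0)}=D\cap\M_{(0)}$, and $\M_{(0)}$ need not be open. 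A concrete counterexample in $\IR^2$: take $\psi=\Phi_1$, let $C=\{x_2\ge0\}$ be the upper half-plane, $D$ the $x_1$-axis, and $\M=D\times\{z_2\le0\}$. Then $\M_{(0)}=D$ and one checks directly that
\[
  \BL(C)\cap\M \;=\; \{((x_1,0),(0,z_2))\mid z_2\le0\} \;=\; \BL(D)\cap\M ,
\]
yet $\theta(C,\M)=\Phi_1(C,D)=\tfrac12$ while $\theta(D,\M)=\Phi_1(D,D)=1$. So Theorem~\ref{thm:biconic-char} is not applicable to~$\theta$. This is not a matter of rephrasing: the biconic locality in Theorem~\ref{thm:biconic-char} carries normal information that your slice at $\vct z=\vct0$ discards, and the open-set locality of~$\psi$ is too weak to compensate. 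The additivity hypothesis~(4), which you correctly note is not among the assumptions of Theorem~\ref{thm:biconic-char}, is precisely what Schneider's direct proof uses to close this gap; it cannot be sidestepped by lifting.
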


\subsection{Kinematic formulas}\label{sec:calc-kinem-conic}

The following theorem provides a kinematic formula for the polyhedral measures, which specializes to the kinematic formula~\eqref{eq:kinform-u} for the $\vct u$-vector in the case $M_0=\cdots=M_n=\IR^d$. So as a corollary we obtain the first half of Theorem~\ref{thm:kinform-u,v}.

\begin{theorem}[Kinematic formula for polyhedral measures]\label{thm:kinform-polyh}
Let $C_0,\ldots,C_n\in\P(\IR^d)$, $M_0,\ldots,M_n\in\hmB(\IR^d)$, $\vct T_0,\ldots,\vct T_n\in\Gl_d$. Then for $\vct Q_0,\ldots,\vct Q_n\in O(d)$ iid uniformly at random and $k>0$,
\begin{equation}\label{eq:kinform-polyh}
   \Expect\bigg[ \Psi_k\bigg( \bigcap_{i=0}^n \vct T_i \vct Q_i C_i , \bigcap_{i=0}^n \vct T_i \vct Q_i M_i\bigg)\bigg] = \Psi_{nd+k}(C,M) ,
\end{equation}
where $C:=C_0\times\cdots\times C_n$, $M:=M_0\times \cdots\times M_n$, and where the polyhedral measures of the product are given by
\begin{equation}\label{eq:Psi_m(CxD)=...}
  \Psi_m(C,M) = \sum_{i_0+\cdots+i_n=m} \Psi_{i_0}(C_0,M_0)\cdots \Psi_{i_n}(C_n,M_n) .
\end{equation}
\end{theorem}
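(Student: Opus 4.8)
The plan is to fix all data except the first cone and invoke the characterization theorem, Theorem~\ref{thm:conic-char}. Keeping the notation $n$ of the statement (so there are $n+1$ cones), fix $C_1,\dots,C_n\in\P(\IR^d)$, $M_1,\dots,M_n\in\hmB(\IR^d)$, $\vct T_0,\dots,\vct T_n\in\Gl_d$ and $k>0$, and define, for $C\in\P(\IR^d)$, $M\in\hmB(\IR^d)$,
\[
 \psi(C,M):=\Expect\Big[\Psi_k\Big(\vct T_0\vct Q_0 C\cap\textstyle\bigcap_{i=1}^{n}\vct T_i\vct Q_i C_i,\ \vct T_0\vct Q_0 M\cap\bigcap_{i=1}^{n}\vct T_i\vct Q_i M_i\Big)\Big],
\]
the expectation over $\vct Q_0,\dots,\vct Q_n\in O(d)$ iid uniform. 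I would first check that $\psi$ satisfies the hypotheses (0)--(3) of Theorem~\ref{thm:conic-char}: (0) is clear; (1) follows from the concentration property of $\Psi_k$ (Proposition~\ref{prop:props-polyh-curv-meas}(1)); (2) follows from the substitution $\vct Q_0\rightsquigarrow\vct Q_0\vct Q$; and (3) follows from the locality of $\Psi_k$ (Proposition~\ref{prop:props-polyh-curv-meas}(3)) together with the genericity results of Appendix~\ref{sec:genericity}: for almost every choice of the $\vct Q_i$ the $k$-skeleton of an intersection of randomly rotated cones decomposes as in the product case (cf.~\eqref{eq:Skel_k(CxD)=...}), and by $\Gl_d$-covariance of skeletons the hypothesis $\Skel_j(C)\cap M=\Skel_j(D)\cap M$ forces the relevant skeletons of $\vct T_0\vct Q_0 C\cap\bigcap_{i\ge1}\vct T_i\vct Q_i C_i$ and of $\vct T_0\vct Q_0 D\cap\bigcap_{i\ge1}\vct T_i\vct Q_i C_i$ to agree on $\vct T_0\vct Q_0 M\cap\bigcap_{i\ge1}\vct T_i\vct Q_i M_i$. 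Theorem~\ref{thm:conic-char} then yields $\psi=\sum_{m}\psi(L^{(m)},\{\vct 0\})\Lin_m+\sum_{m\ge1}\psi(L^{(m)},\IR^d_*)\Psi_m$ with $L^{(m)}\subseteq\IR^d$ an $m$-dimensional subspace. Since $\gamma_L(\{\vct 0\})=0$ when $\dim L=k>0$, we have $\Psi_k(E,\{\vct 0\})=0$ for every cone $E$, so $\psi(\cdot,\{\vct 0\})\equiv0$ and all $\Lin_m$-coefficients vanish; and intersecting with $\IR^d_*$ does not change $\Psi_k$ for $k>0$. Thus $\psi=\sum_{m=1}^{d}b_m\Psi_m$ with
\[
 b_m=\Expect\Big[\Psi_k\Big(\vct T_0\vct Q_0 L^{(m)}\cap\textstyle\bigcap_{i=1}^{n}\vct T_i\vct Q_i C_i,\ \bigcap_{i=1}^{n}\vct T_i\vct Q_i M_i\Big)\Big].
\]

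It remains to identify the $b_m$. Using the generic facial decomposition of $E:=\bigcap_{i=1}^{n}\vct T_i\vct Q_i C_i$ (whose faces are a.s.\ the cones $\bigcap_i\vct T_i\vct Q_i H_i$, $H_i$ a face of $C_i$) and the fact that intersecting a cone with a generic $m$-dimensional subspace $L'$ turns its $(d+k-m)$-dimensional faces into the $k$-dimensional faces of $L'\cap E$ (Appendix~\ref{sec:genericity}), one rewrites the integrand as a sum over tuples $\mathbf H=(H_1,\dots,H_n)$ with $H_i\in\mL(C_i)$ and $\sum_i\dim H_i=nd+k-m$:
\[
 \Psi_k\Big(\vct T_0\vct Q_0 L^{(m)}\cap E,\ \textstyle\bigcap_{i=1}^{n}\vct T_i\vct Q_i M_i\Big)=\sum_{\mathbf H}\gamma_{W_{\mathbf H}}\Big(W_{\mathbf H}\cap\textstyle\bigcap_{i=1}^{n}\vct T_i\vct Q_i(H_i\cap M_i)\Big),
\]
where $W_{\mathbf H}:=\vct T_0\vct Q_0 L^{(m)}\cap\bigcap_i\vct T_i\vct Q_i\spa(H_i)$ is an (a.s.) $k$-dimensional subspace. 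Taking expectations and applying, term by term, the lemma below — with $U_0:=L^{(m)}$, $A_0:=U_0$, $\vct S_0:=\vct T_0$ and $U_i:=\spa(H_i)$, $A_i:=H_i\cap M_i$, $\vct S_i:=\vct T_i$ for $i\ge1$ — gives $b_m=\sum_{\mathbf H}\prod_{i=1}^{n}\gamma_{\spa(H_i)}(H_i\cap M_i)=\Psi_{nd+k-m}(C_1\times\cdots\times C_n,M_1\times\cdots\times M_n)$, the last identity being the iterated product rule~\eqref{eq:Psi_m(CxD)=...}. Feeding this into $\psi=\sum_{m\ge1}b_m\Psi_m$ and using the product rule for $\Psi$ once more, $\psi(C_0,M_0)=\Psi_{nd+k}(C_0\times C_1\times\cdots\times C_n,M_0\times M_1\times\cdots\times M_n)$, which is~\eqref{eq:kinform-polyh}. \emph{Lemma.} Let $U_0,\dots,U_r\subseteq\IR^d$ be linear subspaces, $\vct S_0,\dots,\vct S_r\in\Gl_d$, and $A_j\in\hmB(\IR^d)$ with $A_j\subseteq U_j$; if $\vct Q_0,\dots,\vct Q_r\in O(d)$ are iid uniform and $\dim\bigcap_{j=0}^{r}\vct S_j\vct Q_j U_j=\sum_{j}\dim U_j-rd\ge1$ a.s., then
\[
 \Expect\Big[\gamma_{\bigcap_{j=0}^{r}\vct S_j\vct Q_j U_j}\Big(\textstyle\bigcap_{j=0}^{r}\vct S_j\vct Q_j A_j\Big)\Big]=\prod_{j=0}^{r}\gamma_{U_j}(A_j).
\]

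I would prove this lemma by peeling off the conic sets one at a time. Conditioning on all rotations but $\vct Q_r$ and writing $P:=\bigcap_{j<r}\vct S_j\vct Q_j U_j$, $Q:=\bigcap_{j<r}\vct S_j\vct Q_j A_j\subseteq P$, the assignment $A_r\mapsto\Expect_{\vct Q_r}[\gamma_{P\cap\vct S_r\vct Q_r U_r}(Q\cap\vct S_r\vct Q_r A_r)]$ is a conic measure on the conic Borel subsets of $U_r$: it vanishes on $\{\vct 0\}$, it is $O(U_r)$-invariant — any orthogonal self-map of $U_r$, extended to $O(d)$, is absorbed into $\vct Q_r$ because $U_r$ and $A_r$ are fixed and rotated only from the outside — and its total mass is $\Expect_{\vct Q_r}[\gamma_{P\cap\vct S_r\vct Q_r U_r}(Q)]$; hence by Lemma~\ref{lem:orth-inv-conic-meas}, applied inside $U_r$, it equals $\gamma_{U_r}(A_r)\cdot\Expect_{\vct Q_r}[\gamma_{P\cap\vct S_r\vct Q_r U_r}(Q)]$. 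Taking expectations over the remaining rotations splits off the factor $\gamma_{U_r}(A_r)$ and leaves the same expression with $A_r$ deleted (the subspace $U_r$ still entering the intersection that carries the Gaussian). Iterating over $j=r,r-1,\dots,0$ reduces to $\Expect[\gamma_{\bigcap_j\vct S_j\vct Q_j U_j}(\bigcap_j\vct S_j\vct Q_j U_j)]=1$, valid since the intersection is a.s.\ a positive-dimensional subspace.

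The place where the nondegenerate transformations $\vct T_i$ genuinely bite — and hence the step I expect to be the main obstacle — is precisely this lemma. For $\vct T_0=\cdots=\vct T_n=\Id$ one could instead reduce the intersection of subspaces to uniform random subspaces and iterate a Crofton-type identity for Gaussian measures; but for general $\vct T_i$ the subspace $P\cap\vct S_r\vct Q_r U_r$ is no longer uniformly distributed inside $P$, so this shortcut is unavailable. What saves the argument above is that it never tries to ``average out'' a subspace $U_j$: it only ever moves a fixed conic set $A_j$ by $\vct S_j\vct Q_j$ and absorbs orthogonal motions of $A_j$ into $\vct Q_j$, which keeps Lemma~\ref{lem:orth-inv-conic-meas} applicable at each step regardless of the $\vct S_j$. (The genericity inputs used above — the generic face structure of an intersection of randomly rotated cones, and of an intersection with a randomly rotated subspace — are the statements collected in Appendix~\ref{sec:genericity}.)
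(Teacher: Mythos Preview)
Your argument is correct and largely parallels the paper up to the point where the characterization theorem is invoked: verifying that~$\psi$ satisfies (0)--(3) is exactly Lemma~\ref{lem:psi-satisf-assumpts}, and your observations that the $\Lin$-coefficients vanish and that~$\IR^d_*$ may replace~$\IR^d$ for $k>0$ match the paper's proof. The divergence is in how the coefficients $b_m=\psi(L^{(m)},\IR^d)$ are identified. The paper does not unroll the face structure at all; instead it observes that $b_m$ again has the form of the left-hand side of~\eqref{eq:kinform-polyh}, now with one more cone replaced by a subspace, and proceeds by induction on the number of ``genuine'' cones, applying Theorem~\ref{thm:conic-char} at every step and using the elementary product identity of Lemma~\ref{lem:prod-exch} to reassemble the right-hand side. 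You instead apply the characterization once, then expand $\Psi_k$ of the random intersection into its face-wise summands via the generic skeleton decomposition (Lemma~\ref{lem:generic-skeleton-inters}) and reduce each summand to your standalone lemma on Gaussian measures of random intersections of subspaces, which you prove by iterating Lemma~\ref{lem:orth-inv-conic-meas} rather than the full characterization. Your route is more hands-on with the face structure (and a reader should note that tuples~$\mathbf H$ not yielding an actual face contribute zero because $D\cap W_{\mathbf H}$ is then lower-dimensional in~$W_{\mathbf H}$), while the paper's is cleaner at the level of abstraction, never descending below $\Psi_k$; both ultimately rest on Lemma~\ref{lem:orth-inv-conic-meas}, just organized differently.
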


The product formula~\eqref{eq:Psi_m(CxD)=...} is of course just an iteration of~(4) in Proposition~\ref{prop:props-polyh-curv-meas}. For the proof of Theorem~\ref{thm:kinform-polyh} we need a few of lemmas about properties of generic intersections of cones. In fact, at first glance it might not even be clear that the expectation in~\eqref{eq:kinform-polyh} exists (see~(1) in Proposition~\ref{prop:genericity}). As some of these lemmas are of technical nature while their statements are geometrically obvious, we defer their proofs to Appendix~\ref{sec:genericity}.

\begin{lemma}\label{lem:psi-satisf-assumpts}
Let $C_0,\ldots,C_n\in\P(\IR^d)$, $M_0,\ldots,M_n\in\hmB(\IR^d)$, $\vct T_0,\ldots,\vct T_n\in\Gl_d$, and $k>0$. Then for $\vct Q_0,\ldots,\vct Q_n\in O(d)$ iid uniformly at random, the map $\psi\colon\P(\IR^d)\times\hmB(\IR^d)\to\IR$ given by
\begin{equation}\label{eq:kinform-polyh_lem}
   \psi(C_0,M_0) = \Expect\bigg[ \Psi_k\bigg( \bigcap_{i=0}^n \vct T_i \vct Q_i C_i , \bigcap_{i=0}^n \vct T_i \vct Q_i M_i\bigg)\bigg]
\end{equation}
satisfies the assumptions (0)--(3) in Theorem~\ref{thm:conic-char}.
\end{lemma}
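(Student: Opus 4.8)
The plan is to verify the four conditions (0)--(3) of Theorem~\ref{thm:conic-char} for the map $\psi$, abbreviating $E := \bigcap_{i=0}^n\vct T_i\vct Q_i C_i$ and $N := \bigcap_{i=0}^n\vct T_i\vct Q_i M_i$, so that $\psi(C_0,M_0)=\Expect\big[\Psi_k(E,N)\big]$. Conditions~(1) and~(2) are soft, condition~(0) needs only a uniform bound, and essentially all the content sits in condition~(3), which is where the genericity statements of Appendix~\ref{sec:genericity} get invoked.

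For condition~(0) and the mere existence of the expectation: the cone $E$ is cut out by the halfspaces defining $\vct T_i\vct Q_iC_i$, $i=0,\dots,n$, whose total number does not depend on the $\vct Q_i$, so $f_k(E)$ is bounded by a combinatorial constant depending only on $C_0,\dots,C_n$; hence $0\le\Psi_k(E,N)\le\Psi_k(E,\IR^d)=u_k(E)\le f_k(E)$ is uniformly bounded, and, together with the measurability assertion of Proposition~\ref{prop:genericity}, this makes the expectation well defined and finite. For $\sigma$-additivity in the second argument, if $M_0=\Disjunionu{\ell} M_0^{(\ell)}$ in $\hmB(\IR^d)$ then $N$ splits disjointly as $\Disjunionu{\ell} N^{(\ell)}$ with $N^{(\ell)}=\big(\bigcap_{i\ge1}\vct T_i\vct Q_iM_i\big)\cap\vct T_0\vct Q_0M_0^{(\ell)}$ (each $\vct T_0\vct Q_0$ being a bijection), and since $\Psi_k(E,\cdot)$ is a conic measure, $\Psi_k(E,N)=\sum_\ell\Psi_k(E,N^{(\ell)})$; Tonelli's theorem then permits taking the expectation termwise, so $\psi(C_0,\cdot)\in\hmM(\IR^d)$. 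Condition~(1) is immediate: $\Psi_k(E,\cdot)$ is supported on $E\subseteq\vct T_0\vct Q_0C_0$, so $\Psi_k(E,N)=\Psi_k(E,N\cap\vct T_0\vct Q_0C_0)$, and $N\cap\vct T_0\vct Q_0C_0$ is exactly the set obtained from $N$ when $M_0$ is replaced by $M_0\cap C_0$; take expectations. For condition~(2), replacing $(C_0,M_0)$ by $(\vct QC_0,\vct QM_0)$ with $\vct Q\in O(d)$ turns $\vct T_0\vct Q_0$ acting on $C_0,M_0$ into $\vct T_0\vct Q_0\vct Q=\vct T_0\vct Q_0'$ with $\vct Q_0':=\vct Q_0\vct Q$; by invariance of the Haar measure under right translations and independence, $(\vct Q_0',\vct Q_1,\dots,\vct Q_n)$ has the same law as $(\vct Q_0,\dots,\vct Q_n)$, so the expectation is unchanged.

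The main work is condition~(3): assume $\Skel_j(C_0)\cap M_0=\Skel_j(D_0)\cap M_0$ for all $j$, and set $E':=\vct T_0\vct Q_0D_0\cap\bigcap_{i\ge1}\vct T_i\vct Q_iC_i$. I will show that for almost every $(\vct Q_0,\dots,\vct Q_n)$ one has $\Skel_k(E)\cap N=\Skel_k(E')\cap N$; granting this, the locality property of the polyhedral measures (Proposition~\ref{prop:props-polyh-curv-meas}(3)) gives $\Psi_k(E,N)=\Psi_k(E',N)$ almost surely, and taking expectations yields $\psi(C_0,M_0)=\psi(D_0,M_0)$. For the almost-sure identity I invoke the genericity statement (to be proved in Appendix~\ref{sec:genericity}, in the spirit of Lemma~\ref{lem:TQC-alm-sure-generic}) that, for almost all $(\vct Q_0,\dots,\vct Q_n)$, the skeleton of the generic intersection decomposes transversally as
\[ \Skel_k(E)=\Disjunionu{j_0+\cdots+j_n=nd+k}\ \bigcap_{i=0}^n\vct T_i\vct Q_i\,\Skel_{j_i}(C_i) , \]
and likewise for $E'$ with $C_0$ replaced by $D_0$. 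Intersecting with $N=\bigcap_i\vct T_i\vct Q_iM_i$ then gives $\Skel_k(E)\cap N=\Disjunionu{j_0+\cdots+j_n=nd+k}\bigcap_{i=0}^n\vct T_i\vct Q_i\big(\Skel_{j_i}(C_i)\cap M_i\big)$, an expression in which $C_0$ enters only through the sets $\Skel_{j_0}(C_0)\cap M_0$; by hypothesis these coincide with $\Skel_{j_0}(D_0)\cap M_0$, so the right-hand side is unchanged when $C_0$ is swapped for $D_0$, and the identity follows (intersecting the finitely many full-measure sets involved).

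Thus the only nontrivial ingredient is this genericity fact --- that under generic rotations the $k$-skeleton of an intersection of polyhedral cones is the disjoint union of the transversal intersections of the skeletons of the pieces, with dimensions combining as $\sum_i\dim-nd$ --- together with the boundedness and measurability used in~(0). I expect this to be the real obstacle; it is exactly what Appendix~\ref{sec:genericity} is designed to supply, and once it is available the rest of the argument is bookkeeping.
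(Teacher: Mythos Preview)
Your proof is correct and follows essentially the same route as the paper's. The only cosmetic difference is that for condition~(3) you spell out inline the transversal skeleton decomposition and the substitution argument, whereas the paper packages this step as Proposition~\ref{prop:genericity}(2) (which in turn rests on Lemma~\ref{lem:generic-skeleton-inters}, the precise statement of the decomposition you wrote down); your treatment of~(0)--(2) matches the paper's almost verbatim.
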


\begin{proof}
To simplify the notation we abbreviate $\vct T_i\vct Q_i =: \vct U_i$.

(0) \emph{Claim:} $\psi(C_0,\cdot)\in\hmM(\IR^d)$. For pairwise disjoint $N_j\in\hmB(\IR^d)$, $j=1,2,\ldots$, we have that $\vct U_0N_j$, $j=1,2,\ldots$, and $\vct U_0N_j\cap \bigcap_{i=1}^n \vct U_i M_i$, $j=1,2,\ldots$, are pairwise disjoint as well. Therefore,
\begin{align*}
   & \psi\bigg(C_0,\bigcup_{j=1}^\infty N_j\bigg) = \Expect\bigg[ \Psi_k\bigg( \bigcap_{i=0}^n \vct U_i C_i , \bigg(\bigcup_{j=1}^\infty \vct U_0N_j\bigg) \cap \bigcap_{i=1}^n \vct U_i M_i \bigg)\bigg] = \Expect\bigg[ \Psi_k\bigg( \bigcap_{i=0}^n \vct U_i C_i , \bigcup_{j=1}^\infty \bigg(\vct U_0N_j \cap \bigcap_{i=1}^n \vct U_i M_i \bigg)\bigg)\bigg]
\\ &  \;\; = \Expect\bigg[ \sum_{j=1}^\infty \Psi_k\bigg( \bigcap_{i=0}^n \vct U_i C_i , \vct U_0N_j \cap \bigcap_{i=1}^n \vct U_i M_i \bigg) \bigg] \stackrel{(*)}{=} \sum_{j=1}^\infty \Expect\bigg[ \Psi_k\bigg( \bigcap_{i=0}^n \vct U_i C_i , \vct U_0N_j \cap \bigcap_{i=1}^n \vct U_i M_i \bigg) \bigg] = \sum_{j=1}^\infty \psi(C_0,N_j) ,
\end{align*}
where $(*)$ follows from an application of the monotone convergence theorem.

(1) \emph{Claim:} $\psi(C_0,M_0)=\psi(C_0,M_0\cap C_0)$. This follows directly from the locality of~$\Psi_k$,
\begin{align*}
   \psi(C_0,M_0\cap C_0) & = \Expect\bigg[ \Psi_k\bigg( \bigcap_{i=0}^n \vct U_i C_i , \vct U_0 C_0 \cap \bigcap_{i=0}^n \vct U_i M_i \bigg)\bigg] = \Expect\bigg[ \Psi_k\bigg( \bigcap_{i=0}^n \vct U_i C_i , \bigcap_{i=0}^n \vct U_i M_i \bigg)\bigg] = \psi(C_0,M_0) .
\end{align*}

(2) \emph{Claim:} $\psi(\vct Q C_0,\vct Q M_0)=\psi(C_0,M_0)$ for $\vct Q\in O(d)$. This follows from the observation that $\vct Q_0\vct Q$ is uniformly at random in~$O(d)$ and independent of $\vct Q_1,\ldots,\vct Q_n$.

(3) \emph{Claim:} $\psi(C_0,M_0)=\psi(\tilde C_0,M_0)$ if $\Skel_k(C_0)\cap M_0=\Skel_k(\tilde C_0)\cap M_0$ for all $k=0,\ldots,d$. See (2) in Proposition~\ref{prop:genericity}.
\end{proof}

The following simple lemma will be convenient in the proof of Theorem~\ref{thm:kinform-polyh}.

\begin{lemma}\label{lem:prod-exch}
Let $k>0$, $C,D\in\P(\IR^d)$, $M,N\in\hmB(\IR^d)$. Then
  \[ \sum_{j=1}^d \Psi_j(C,M) \, \Psi_{d+k}(L_j\times D,\IR^d\times N) = \Psi_{d+k}(C\times D,M\times N) , \]
where $L_j\subseteq\IR^d$ a $j$-dimensional linear subspace.
\end{lemma}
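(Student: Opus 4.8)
The plan is to reduce the identity to the product rule for polyhedral measures, Proposition~\ref{prop:props-polyh-curv-meas}(4) (i.e.~\eqref{eq:Psi_m(CxD)=...}), after first recording the polyhedral measures of a linear subspace. The key initial observation is that for a $j$-dimensional linear subspace $L_j\subseteq\IR^d$ the only face is $L_j$ itself, so $\mL_a(L_j)=\{L_j\}$ when $a=j$ and $\mL_a(L_j)=\emptyset$ otherwise; hence $\Psi_a(L_j,\IR^d)=\gamma_{L_j}(L_j)=1$ for $a=j$ and $\Psi_a(L_j,\IR^d)=0$ for $a\neq j$.

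Next I would apply the product rule to each summand on the left of the claimed identity, treating $L_j$ as the first factor:
\[
  \Psi_{d+k}(L_j\times D,\IR^d\times N)=\sum_{a+b=d+k}\Psi_a(L_j,\IR^d)\,\Psi_b(D,N)=\Psi_{d+k-j}(D,N),
\]
since the previous step annihilates every term except $a=j$. Substituting this back, the left-hand side of the lemma becomes $\sum_{j=1}^d\Psi_j(C,M)\,\Psi_{d+k-j}(D,N)$.

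It then remains to expand the right-hand side by the same product rule,
\[
  \Psi_{d+k}(C\times D,M\times N)=\sum_{i=0}^{d+k}\Psi_i(C,M)\,\Psi_{d+k-i}(D,N),
\]
and to check that only the indices $1\le i\le d$ actually contribute. Here one uses $C,D\in\P(\IR^d)$: the factor $\Psi_i(C,M)$ vanishes for $i>d$, and $\Psi_{d+k-i}(D,N)$ vanishes whenever $d+k-i>d$, i.e.~for $i<k$; in particular the boundary term $i=0$ is killed because its factor $\Psi_{d+k}(D,N)$ vanishes, as $d+k>d$ when $k>0$. Thus the right-hand side equals $\sum_{j=1}^d\Psi_j(C,M)\,\Psi_{d+k-j}(D,N)$, which is precisely the left-hand side.

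The argument is routine throughout; the only place that calls for a little care is the index bookkeeping in the last step — specifically, noticing that the hypothesis $k>0$ is exactly what removes the spurious $i=0$ term (which would otherwise contribute the "lineality" part $\Psi_0=\Lin_0$ of the product).
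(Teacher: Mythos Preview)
Your proof is correct and follows essentially the same approach as the paper's own proof: both apply the product rule (Proposition~\ref{prop:props-polyh-curv-meas}(4)) twice, first to simplify $\Psi_{d+k}(L_j\times D,\IR^d\times N)=\Psi_{d+k-j}(D,N)$ via the subspace property of the polyhedral measures, and then to identify the resulting sum with $\Psi_{d+k}(C\times D,M\times N)$. Your write-up is simply a bit more explicit about the index bookkeeping (in particular the role of $k>0$ in eliminating the $i=0$ term), which the paper leaves implicit.
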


\begin{proof}
Using the product rule in Proposition~\ref{prop:props-polyh-curv-meas}, we obtain
\begin{align*}
   \Psi_{d+k}(L_j\times D,\IR^d\times N) & = \sum_{\ell+m=d+k} \Psi_\ell(L_j,\IR^d)\,\Psi_m(D,N) = \Psi_{d+k-j}(D,N) ,
\intertext{and therefore, using the product rule one more time,}
   \sum_{j=1}^d \Psi_j(C,M) \, \Psi_{d+k}(L_j\times D,\IR^d\times N) & = \sum_{j=1}^d \Psi_j(C,M) \, \Psi_{d+k-j}(D,N) = \Psi_{d+k}(C\times D,M\times N) . \qedhere
\end{align*}
\end{proof}

\begin{proof}[Proof of Theorem~\ref{thm:kinform-polyh}]
To simplify the notation we abbreviate $\vct T_i\vct Q_i =: \vct U_i$. Define
\begin{align*}
   \LEFT(C_0,\ldots,C_n;M_0,\ldots,M_n) & := \Expect\bigg[ \Psi_k\bigg( \bigcap_{i=0}^n \vct U_i C_i , \bigcap_{i=0}^n \vct U_i M_i\bigg)\bigg] ,
\\ \RIGHT(C_0,\ldots,C_n;M_0,\ldots,M_n) & := \Psi_{nd+k}\big( C_0\times\cdots\times C_n ,\, M_0\times \cdots\times M_n \big) ,
\end{align*}
so that we need to show $\LEFT(C_0,\ldots,C_n;M_0,\ldots,M_n) = \RIGHT(C_0,\ldots,C_n;M_0,\ldots,M_n)$. By induction on~$m$ we will show that
\begin{align*}
   \LEFT\big(C_0,\ldots,C_{m-1},L^{(0)},\ldots,L^{(n-m)};M_0,\ldots,M_{m-1},\IR^d,\ldots,\IR^d\big) &
\\ = \RIGHT\big(C_0,\ldots,C_{m-1},L^{(0)},\ldots,L^{(n-m)};M_0,\ldots,M_{m-1},\IR^d,\ldots,\IR^d\big) &
\end{align*}
where $L^{(0)},\ldots,L^{(n-m)}\subseteq\IR^d$ linear subspaces.

The case $m=0$ is easily established: the intersection $\bigcap_{i=0}^n \vct U_i L^{(i)}$ is almost surely a linear subspace of dimension $\max\{\dim L^{(0)}+\cdots+\dim L^{(n)}-nd,0\}$, and the direct product $L^{(0)}\times\cdots\times L^{(n)}$ is a linear subspace of dimension $\dim L^{(0)}+\cdots+\dim L^{(n)}$, so that
\begin{align*}
   \LEFT\big(L^{(0)},\ldots,L^{(n)};\IR^d,\ldots,\IR^d\big) & = \begin{cases} 1 & \text{if } nd+k = \dim L^{(0)}+\cdots+\dim L^{(n)} \\ 0 & \text{else} \end{cases}
\\ & = \RIGHT\big(L^{(0)},\ldots,L^{(n)};\IR^d,\ldots,\IR^d\big) .
\end{align*}

For the induction step, $m\geq1$, we define $\psi\colon\P(\IR^d)\times\hmB(\IR^d)\to\IR$,
\begin{align*}
   \psi(C,M) & := \LEFT\big(C_0,\ldots,C_{m-2},C,L^{(0)},\ldots,L^{(n-m)};M_0,\ldots,M_{m-2},M,\IR^d,\ldots,\IR^d)
\\ & = \Expect\bigg[ \Psi_k\bigg( \bigcap_{i=0}^{m-2} \vct U_i C_i \cap \vct U C\cap \bigcap_{i=0}^{n-m} \vct U_{m+i} L^{(i)} , \bigcap_{i=0}^{m-2} \vct U_i M_i \cap \vct U M \bigg)\bigg] ,
\end{align*}
where we set $\vct U:=\vct U_{m-1}$ to simplify the notation. Note that $\Psi_k(C,\{\vct0\})=0$, since $k>0$, and thus $\psi(C,\{\vct0\})=0$. By Lemma~\ref{lem:psi-satisf-assumpts}, $\psi$ satisfies the assumptions in Theorem~\ref{thm:conic-char}, so that
\begin{align*}
   & \LEFT\big(C_0,\ldots,C_{m-1},L^{(0)},\ldots,L^{(n-m)};M_0,\ldots,M_{m-1},\IR^d,\ldots,\IR^d\big)
\\ & = \psi(C_{m-1},M_{m-1}) \stackrel{\eqref{eq:exp-psi-Psi}}{=} \sum_{j=0}^d \underbrace{\psi(L_j,\{\vct0\})}_{=0}\,\Lin_j(C_{m-1},M_{m-1}) + \sum_{j=1}^d \underbrace{\psi(L_j,\IR^d_*)}_{=\psi(L_j,\IR^d)}\,\Psi_j(C_{m-1},M_{m-1})
\\ & = \sum_{j=1}^d \LEFT\big(C_0,\ldots,C_{m-2},L_j,L^{(0)},\ldots,L^{(n-m)};M_0,\ldots,M_{m-2},\IR^d,\ldots,\IR^d\big)\,\Psi_j(C_{m-1},M_{m-1})
\\ & \stackrel{\text{(IH)}}{=} \sum_{j=1}^d \Psi_{nd+k}\big( C_0\times\cdots\times C_{m-2}\times L_j\times L^{(0)}\times \cdots\times L^{(n-m)} ,\, M_0\times \cdots\times M_{m-2}\times \IR^d\times \cdots\times \IR^d\big)
\\ & \hspace{15mm} \cdot \Psi_j(C_{m-1},M_{m-1})
\\ & \stackrel{\text{[Lem.~\ref{lem:prod-exch}]}}{=} \RIGHT\big(C_0,\ldots,C_{m-1},L^{(0)},\ldots,L^{(n-m)};M_0,\ldots,M_{m-1},\IR^d,\ldots,\IR^d\big) ,
\end{align*}
which shows the induction step, and thus finishes the proof.
\end{proof}

\section{Biconic localizations}\label{sec:biconic-localizations}

In this section we consider the biconic localizations of the intrinsic volumes, the support measures. As in Section~\ref{sec:conic-localizations} we start with some preliminaries on the (biconic) Borel algebra in Section~\ref{prelim:biconic} and with a discussion of the general properties of the support measures in Section~\ref{sec:suppmeas}. In Section~\ref{sec:char-supp-meas} we show how the support measures can be characterized through concentration, invariance, locality, and in Section~\ref{sec:kinform-suppmeas} we will use this characterization to prove a kinematic formula for the support measures, which generalizes~\eqref{eq:kinform-v}.

\subsection{Preliminaries: biconic Borel sets and measures}\label{prelim:biconic}

Recall that the biconic (Borel) $\sigma$-algebra on~$\IR^{d+d}=\IR^d\times\IR^d$ is defined by
\begin{align*}
  \hmB(\IR^d,\IR^d) & = \{ \M\subseteq\IR^{d+d} \text{ Borel set}\mid (\lambda,\lambda') \M=\M \text{ for all }\lambda,\lambda'>0 \} ,
\end{align*}
where $(\lambda,\lambda')\M=\{(\lambda\vct x,\lambda'\vct x')\mid (\vct x,\vct x')\in\M\}$. We denote the corresponding set of biconic measures by~$\hmM(\IR^d,\IR^d)$. The (biconic) Dirac measure on $\IR^{d+d}$ supported in $(\vct0,\vct0)$ is denoted by~$\mDir$.

Sets of the form $M\times M'$ with $M,M'\in\hmB(\IR^d)$ belong to~$\hmB(\IR^d,\IR^d)$, but not all elements in~$\hmB(\IR^d,\IR^d)$ are of this form. However, direct products \emph{generate} the biconic $\sigma$-algebra, as we will show in Proposition~\ref{prop:bicon-prod} below that the biconic $\sigma$-algebra is the product of the conic $\sigma$-algebras,
  \[ \hmB(\IR^d,\IR^d) = \hmB(\IR^d)\otimes \hmB(\IR^d) . \]

The set of polyhedral cones can be embedded into the biconic $\sigma$-algebra by the map
\begin{align}\label{eq:def-BL(C)}
   \BL & \colon \P(\IR^d) \to \hmB(\IR^d,\IR^d) , & \BL(C) & := \{(\vct x,\vct z)\in C\times C^\polar \mid \langle \vct x,\vct z\rangle = 0\} .
\end{align}
We call $\BL(C)$ the \emph{biconic lift} of~$C$. Note that for a linear subspace $L\subseteq\IR^d$ we obtain $\BL(L)=L\times L^\bot$, but in general $\BL(C)$ is not a direct product. Combining the facial decomposition~\eqref{eq:fac-decomp-C} with the polarity relation~\eqref{eq:L_k(C)--L_(d-k)(C^polar)} yields the following two disjoint decompositions of the biconic lift
\begin{align}\label{eq:decomp-Nor}
  \BL(C) & = \Disjunionu{L\in\mL(C)} \big(\inter_L(C\cap L)\times (C^\polar\cap L^\bot)\big) = \Disjunionu{L\in\mL(C)} \big((C\cap L)\times \inter_{L^\bot}(C^\polar\cap L^\bot)\big) .
\end{align}
These decompositions will be important for the proof of the Characterization Theorem~\ref{thm:biconic-char} in Section~\ref{sec:char-supp-meas}.

Besides lifting the whole cone into the biconic $\sigma$-algebra, it is also convenient to lift the $k$-skeleton via
\begin{align}\label{eq:def-lift-Skel_k(C)}
   \mSkel_k(C) & := \Disjunionu{L\in\mL_k(C)}\big( \inter_L(C\cap L)\times\inter_{L^\bot}(C^\polar\cap L^\bot)\big) , & \BLs(C) & := \Disjunionuo{k=0}{d} \mSkel_k(C) .
\end{align}
Except in the case where $C$ is a linear subspace, $\BLs(C)$ is a proper subset of~$\BL(C)$. But the following simple proposition shows that $\BLs(C)$ makes up the ``essential part'' of~$\BL(C)$.

\begin{proposition}\label{prop:biconic-proj}
Let $C\in\P(\IR^d)$ and let its projection map be denoted by $\Pi_C\colon\IR^d\to C$, $\Pi_C(\vct x)=\argmin\{\|\vct x-\vct y\|\mid \vct y\in C\}$. Then for every $\vct x\in\IR^d$,
\begin{equation}\label{eq:def-biconic-proj}
  \bproj_C(\vct x) := \big( \Pi_C(\vct x),\Pi_{C^\polar}(\vct x)\big) \in \BL(C) .
\end{equation}
Moreover, if $\vct g\in\IR^d$ denotes a Gaussian random vector, then almost surely
  \[ \bproj_C(\vct g) \in \BLs(C) . \]
\end{proposition}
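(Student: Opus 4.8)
The proof has two parts, and I will treat them separately. For the first claim, that $\bproj_C(\vct x) = (\Pi_C(\vct x), \Pi_{C^\polar}(\vct x)) \in \BL(C)$ for \emph{every} $\vct x \in \IR^d$, the key tool is the Moreau decomposition for polar cones: every $\vct x \in \IR^d$ decomposes uniquely as $\vct x = \Pi_C(\vct x) + \Pi_{C^\polar}(\vct x)$ with $\Pi_C(\vct x) \in C$, $\Pi_{C^\polar}(\vct x) \in C^\polar$, and $\langle \Pi_C(\vct x), \Pi_{C^\polar}(\vct x)\rangle = 0$. Granting Moreau, the membership $(\Pi_C(\vct x), \Pi_{C^\polar}(\vct x)) \in \{(\vct x', \vct z) \in C \times C^\polar \mid \langle \vct x', \vct z\rangle = 0\} = \BL(C)$ is immediate from the definition~\eqref{eq:def-BL(C)}. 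I would either cite Moreau's theorem as standard or give the short argument: $\Pi_C$ is characterized by $\vct x - \Pi_C(\vct x) \in N_C(\Pi_C(\vct x))$, the normal cone, which for a cone equals $\{\vct z \in C^\polar \mid \langle \Pi_C(\vct x), \vct z\rangle = 0\}$, and symmetrically $\vct x - \Pi_{C^\polar}(\vct x)$ lies in the analogous normal cone of $C^\polar$; uniqueness of the decomposition then forces $\Pi_C(\vct x) = \vct x - \Pi_{C^\polar}(\vct x)$.

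For the second (almost-sure) claim, I would first observe that by the facial decomposition~\eqref{eq:fac-decomp-C} we always have $\Pi_C(\vct g) \in \Skel_k(C)$ for exactly one $k$, i.e.\ $\Pi_C(\vct g) \in \inter_L(C \cap L)$ for the unique $L = L(\vct g) \in \mL(C)$ with $\Pi_C(\vct g)$ in the relative interior of the face $C \cap L$. By part one, the orthogonal vector $\Pi_{C^\polar}(\vct g) = \vct g - \Pi_C(\vct g)$ lies in $C^\polar \cap L^\bot$ (it is in $C^\polar$ and orthogonal to $\Pi_C(\vct g)$, hence to $\spa(C \cap L) = L$ since the normal cone at a relative interior point is orthogonal to the whole face's span). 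So $\bproj_C(\vct g) \in \inter_L(C \cap L) \times (C^\polar \cap L^\bot)$ always; the only thing that can fail is that $\Pi_{C^\polar}(\vct g)$ might land on the \emph{relative boundary} of the polar face $C^\polar \cap L^\bot$ rather than its relative interior, which is exactly the gap between $\BL(C)$ and $\BLs(C)$ as noted in the excerpt.

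The main obstacle, then, is showing this bad event has probability zero. The clean route is to use the symmetry $\Pi_{C^\polar}(\vct g) = \vct g - \Pi_C(\vct g)$ together with the \emph{dual} facial decomposition: $\Pi_{C^\polar}(\vct g) \in \Skel_{k'}(C^\polar)$ for a unique $k'$, say $\Pi_{C^\polar}(\vct g) \in \inter_{L'}(C^\polar \cap L'^\bot)$ for the appropriate $L' \in \mL(C^\polar)$, using the bijection~\eqref{eq:L_k(C)--L_(d-k)(C^polar)}. Applying part one to $C^\polar$ (whose polar is $C$) gives $\Pi_C(\vct g) \in C \cap L'$, so we are forced into $\Pi_C(\vct g) \in (C\cap L) \cap (C\cap L')$ with $\Pi_C(\vct g) \in \inter_L(C\cap L)$; I claim this forces $L = L'$ on the event that $\Pi_C(\vct g)$ is a relative interior point of its face, because a relative interior point of the face $C\cap L$ cannot also lie in any other face $C\cap L'$ (faces of a polyhedral cone that share a relative interior point coincide). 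Hence $L' = L$, and then $\Pi_{C^\polar}(\vct g) \in \inter_{L^\bot}(C^\polar \cap L^\bot)$, giving $\bproj_C(\vct g) \in \mSkel_k(C) \subseteq \BLs(C)$ — with \emph{no} exceptional event at all. In other words, the "almost surely" can be upgraded to "always", and the probabilistic language is only there because the analogous statement for the curvature measures~\eqref{eq:def-Phi-vect-Prob} lives most naturally in that language. If I have overlooked a subtlety (e.g.\ the face-sharing claim needs $\Pi_C(\vct g) \ne \vct 0$ or a genericity hypothesis on $\vct g$ to rule out coincidences of faces), the fallback is the genuinely probabilistic argument: the relative boundary of $C^\polar \cap L^\bot$ within $L^\bot$ is a finite union of lower-dimensional pieces, each a $\gamma_L$-null set after slicing, and Fubini against the product Gaussian structure on $L \oplus L^\bot$ kills it; summing over the finitely many $L \in \mL(C)$ finishes. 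I expect the cleaner "always" argument to go through, making the face-coincidence lemma the one point worth stating carefully.
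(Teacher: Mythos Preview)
Your treatment of the first claim via Moreau's decomposition matches the paper's proof exactly.

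For the second claim, however, your primary argument contains a genuine error: the assertion that ``a relative interior point of the face $C\cap L$ cannot also lie in any other face $C\cap L'$'' is false. The correct fact is that if a point lies in $\inter_L(C\cap L)$ and also in another face $C\cap L'$, then $C\cap L$ is a \emph{subface} of $C\cap L'$, i.e.\ $L\subseteq L'$, not $L=L'$. Running your argument symmetrically for $C^\polar$ yields the \emph{same} inclusion (from $\Pi_{C^\polar}(\vct g)\in\inter_{(L')^\bot}(C^\polar\cap (L')^\bot)$ and $\Pi_{C^\polar}(\vct g)\in C^\polar\cap L^\bot$ one gets $(L')^\bot\subseteq L^\bot$, hence again $L\subseteq L'$), never the reverse. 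Concretely, take $C=\IR_+\times\IR\subseteq\IR^2$ and $\vct x=(0,1)$: then $\Pi_C(\vct x)=(0,1)$ lies in the relative interior of the boundary line $L=\{0\}\times\IR$, while $\Pi_{C^\polar}(\vct x)=(0,0)$ lies in the relative interior only of the zero face of $C^\polar$, corresponding to $L'=\IR^2\supsetneq L$. Here $\bproj_C(\vct x)\notin\BLs(C)$, so the ``almost surely'' genuinely cannot be upgraded to ``always''.

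Your fallback is essentially correct, and in fact the corrected face argument above already delivers the paper's proof in a cleaner form than your Fubini sketch. Once you know the bad event forces $L\subsetneq L'$, you have
\[
\vct x=\Pi_C(\vct x)+\Pi_{C^\polar}(\vct x)\in L+(L')^\bot,\qquad \dim L+\dim(L')^\bot<\dim L'+\dim(L')^\bot=d,
\]
so the bad set is contained in the finite union of proper subspaces $L+M$ with $L\in\mL_m(C)$, $M\in\mL_n(C^\polar)$, $m+n<d$, which has Gaussian measure zero. This is exactly the paper's argument, and it bypasses any need to condition or slice.
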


\begin{proof}
The first claim is a well-known result by J.-J.~Moreau, cf.~for example~\cite[Sec.~3.2]{HUL:01}: the projection onto the primal cone~$\Pi_C(\vct x)=:\vct y$ and the projection onto the polar cone~$\Pi_{C^\polar}(\vct x)=:\vct y'$ satisfy $\langle \vct y,\vct y'\rangle=0$ (and $\vct y+\vct y'=\vct x$).

As for the second claim, it is easily seen that if $\big( \Pi_C(\vct x),\Pi_{C^\polar}(\vct x)\big)$ does not lie in the union of the $\mSkel_k(C)$, $k=0,\ldots,d$, then $\vct x$ lies in a subspace of the form $L+L'$ with $L\in\mL_m(C)$, $L'\in\mL_n(C^\polar)$, and $m+n<d$. The union of all these (finitely many) subspaces, has Gaussian measure zero, which shows the second claim.
\end{proof}

Proposition~\ref{prop:biconic-proj} implies that, using the notation from~\eqref{eq:def-biconic-proj},  we can write the support measures in the following way
\begin{equation}\label{eq:def-Theta-vect-Prob-Skel}
  \Theta_k(C,\M) = \Prob\big\{\bproj_C(\vct g)\in\mSkel_k(C)\cap\M\big\} ,
\end{equation}
cp.~the analogous characterizations of the intrinsic volumes~\eqref{eq:def-v-vect-Prob} and the curvature measures~\eqref{eq:def-Phi-vect-Prob}. In fact, Moreau's decomposition theorem states that
\begin{equation}\label{eq:addoPi_C=Id}
   \add\circ\bproj_C = \Id_d ,\qquad \add\colon \IR^{d+d}\to\IR^d ,\quad \add(\vct x,\vct x') := \vct x+\vct x' .
\end{equation}
Using this notation, we can write the support measures in the following form in which the projection map is completely eliminated,
\begin{equation}\label{eq:def-Theta-vect-Prob-no-proj}
  \Theta_k(C,\M) = \gamma_d\Big( \add\big( \mSkel_k(C)\cap\M\big) \Big) .
\end{equation}

The biconic structure naturally admits an involution, which we call the \emph{reversal map},
\begin{align}\label{eq:def-rev}
  \rev & \colon\hmB(\IR^d,\IR^d)\to\hmB(\IR^d,\IR^d) , & \rev(\M) & := \big\{(\vct x',\vct x)\mid (\vct x,\vct x')\in\M\big\} .
\end{align}
The reversal of the biconic lift is the biconic lift of the polar, $\rev\big( \BL(C)\big) = \BL(C^\polar)$, and for the lifted $k$-skeletons we obtain $\rev\big( \mSkel_k(C)\big) = \mSkel_{d-k}(C^\polar)$. Composing the reversal map with the biconic projection~\eqref{eq:def-biconic-proj} yields $\rev\circ\bproj_C=\bproj_{C^\polar}$.

Another natural definition is the following action of the general linear group. Recall that $(\vct TC)^\polar = \vct T^\invadj C^\polar$. We define the action of the general linear group on the biconic $\sigma$-algebra via
\begin{equation}\label{eq:def-TM-biconic}
  \vct T\M := \{(\vct{Tx},\vct T^\invadj\vct x')\mid (\vct x,\vct x')\in\M\} .
\end{equation}
This action has the following relations to the other structures we have introduced so far,
\begin{align*}
   \BL(\vct TC) & = \vct T\BL(C) , & \mSkel_k(\vct TC) & = \vct T\mSkel_k(C) , & \rev(\vct T\M) & = \vct T^\invadj\rev(\M) , & \vct T\bproj_C & = \bproj_{\vct TC} .
\end{align*}

When forming the product of biconic sets of the form $M\times M'$ and $N\times N'$ it makes sense to take the product of the first components as the first component and the products of the second componensts as the second component, i.e., take $M\times N\times M'\times N'$. We call the corresponding construction for general biconic sets the \emph{biconic product}: for $\M\in\hmB(\IR^d,\IR^d)$, $\N\in\hmB(\IR^e,\IR^e)$,
\begin{equation}\label{eq:bicon-prod}
  \M \bprod \N := \big\{ ( \vct x,\vct y,\vct x',\vct y') \mid (\vct x,\vct x')\in\M,(\vct y,\vct y')\in\N\big\} \in \hmB(\IR^{d+e},\IR^{d+e}) .
\end{equation}
Note that we indeed have $(M\times M')\bprod(N\times N')=M\times N\times M'\times N'$, and
\begin{align}\label{eq:biconic-prod-idents}
   \BL(C\times D) & = \BL(C)\bprod\BL(D) , & \mSkel_k(C\times D) & = \Disjunionu{i+j=k}\mSkel_i(C)\bprod\mSkel_j(D) , & \bproj_{C\times D} & = \bproj_C\bprod\bproj_D ,
\nonumber
\\ \rev(\M\bprod\N) & = \rev(\M)\bprod\rev(\N) , & \vct T(\M\bprod\N) & = \vct T\M \bprod \vct T\N .
\end{align}

The final structure, which we introduce on $\hmB(\IR^d,\IR^d)$, is the biconic version of intersection, and its associated reverse operation; we use the neutral terms of \emph{conjunction}~$\wedge$ and \emph{disjunction}~$\vee$: for $\M,\N\in\hmB(\IR^d,\IR^d)$,
\begin{align}
   \M\wedge\N & := \big\{(\vct x,\vct x'+\vct y')\mid (\vct x,\vct x')\in\M , (\vct x,\vct y')\in\N\big\} ,
\label{eq:def-conj-biconic}
\\ \M\vee\N & := \big\{(\vct x+\vct y,\vct x')\mid (\vct x,\vct x')\in\M , (\vct y,\vct x')\in\N\big\} .
\label{eq:def-disj-biconic}
\end{align}
For direct products $\M=M\times M'$, $\N=N\times N'$ we obtain
\begin{align*}
   \M\wedge\N & = (M\cap N)\times (M'+N') , & \M\vee\N & = (M+N)\times (M'\cap N') .
\end{align*}
The conjunction and disjunction naturally extend the lattice structure from $\P(\IR^d)$ to the biconic $\sigma$-algebra, since
\begin{align*}
   \BL(C)\wedge \BL(D) & = \BL(C\cap D) , & \BL(C)\vee \BL(D) & = \BL(C+D) .
\end{align*}
However, it should be noted that although the biconic $\sigma$-algebra $\hmB(\IR^d,\IR^d)$ is now endowed with a similar number of operations, $(\rev,\wedge,\vee)$, as the set of polyhedral cones~$\P(\IR^d)$, $(.^\polar,\cap,+)$, the structure of $\hmB(\IR^d,\IR^d)$ is \emph{significantly weaker} than the structure of~$\P(\IR^d)$. In fact, for $d\geq2$ the biconic $\sigma$-algebra even fails to be a lattice, as, for example, the idempotency axiom $\M\wedge\M=\M$ is in general not satisfied. The following proposition lists some further important properties of this structure, like De Morgan's Law.

\begin{proposition}\label{prop:biconic-conj}
Let $\M,\N,\M_0,\M_1\in\hmB(\IR^d,\IR^d)$. Then
\begin{align}
  \rev(\M\wedge \N) & = \rev(\M)\vee \rev(\N) , & (\M_0\cup\M_1) \wedge \N & = (\M_0\wedge\N) \cup (\M_1\wedge\N) ,
\label{eq:De-Morg-biconic}
\\ & & (\M_0\cap\M_1) \wedge \N & \subseteq (\M_0\wedge\N) \cap (\M_1\wedge\N) .
\nonumber
\end{align}
If $d\geq2$, then in general, $(\M_0\cap\M_1) \wedge \M \neq (\M_0\wedge\N) \cap (\M_1\wedge\N)$, and $\M_0\cap\M_1=\emptyset$ does not imply $(\M_0\wedge\N) \cap (\M_1\wedge\N)=\emptyset$.
\end{proposition}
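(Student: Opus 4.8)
The strategy is to verify each identity directly from the definitions \eqref{eq:def-conj-biconic} and \eqref{eq:def-disj-biconic}, reducing everything to elementwise manipulations of the biconic sets. For the De Morgan law $\rev(\M\wedge\N)=\rev(\M)\vee\rev(\N)$, I would unravel both sides: a point $(\vct u,\vct v)$ lies in $\rev(\M\wedge\N)$ iff $(\vct v,\vct u)\in\M\wedge\N$, i.e.\ iff $\vct v$ is a first coordinate and $\vct u=\vct x'+\vct y'$ with $(\vct v,\vct x')\in\M$, $(\vct v,\vct y')\in\N$; on the other side, $(\vct u,\vct v)\in\rev(\M)\vee\rev(\N)$ iff $\vct u=\vct a+\vct b$ with $(\vct a,\vct v)\in\rev(\M)$ and $(\vct b,\vct v)\in\rev(\N)$, i.e.\ $(\vct v,\vct a)\in\M$, $(\vct v,\vct b)\in\N$. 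Matching $\vct x'\leftrightarrow\vct a$, $\vct y'\leftrightarrow\vct b$ gives the equality. The distributivity of $\wedge$ over unions, $(\M_0\cup\M_1)\wedge\N=(\M_0\wedge\N)\cup(\M_1\wedge\N)$, follows because the defining condition ``$(\vct x,\vct x')\in\M_0\cup\M_1$ and $(\vct x,\vct y')\in\N$'' is equivalent to the disjunction of the two conditions with $\M_0$ and $\M_1$ separately, the second conjunct $(\vct x,\vct y')\in\N$ being shared. For the one-sided inclusion $(\M_0\cap\M_1)\wedge\N\subseteq(\M_0\wedge\N)\cap(\M_1\wedge\N)$: if $(\vct x,\vct x'+\vct y')$ arises from $(\vct x,\vct x')\in\M_0\cap\M_1$ and $(\vct x,\vct y')\in\N$, then the same witnesses show membership in $\M_0\wedge\N$ and in $\M_1\wedge\N$.

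The genuinely interesting part—and what I expect to be the main obstacle—is exhibiting the counterexamples for $d\geq 2$ showing that the inclusion is strict and that disjointness is not preserved. Here the asymmetry in \eqref{eq:def-conj-biconic} is crucial: in $\M\wedge\N$ the first coordinates of the two points are \emph{forced to agree} while the second coordinates are \emph{added}, so two disjoint sets $\M_0,\M_1$ can still share a first coordinate $\vct x$ for which the corresponding second-coordinate fibers, once summed with the $\N$-fiber over $\vct x$, overlap. Concretely, I would work in $d=2$ and pick $\M_0,\M_1,\N$ to be direct products of rays or half-planes: e.g.\ take $\N=\IR^2\times\IR^2$ so that $\M_i\wedge\N=M_i\times\IR^2$ where $\M_i=M_i\times M_i'$, and then choose $M_0,M_1\subseteq\IR^2$ to be \emph{distinct} conic Borel sets (say two different rays) with $M_0\cap M_1=\{\vct 0\}$ but $M_i'$ chosen so that $\M_0\cap\M_1=\emptyset$ (possible since the full intersection must vanish in \emph{both} factors—pick $M_0',M_1'$ disjoint rays as well). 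Since $M_0\times\IR^2$ and $M_1\times\IR^2$ both contain $\{\vct 0\}\times\IR^2$, they are not disjoint, giving the second assertion; and a similar bookkeeping with $M_0\cap M_1\neq M_0$ gives strictness of the inclusion. One should double-check that these are legitimate elements of $\hmB(\IR^d,\IR^d)$ (scaling-invariance in both arguments separately), which is immediate for products of rays.

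A cleaner way to package the counterexamples, which I would prefer if it compiles tidily, is to use biconic lifts of cones: since $\BL(C)\wedge\BL(D)=\BL(C\cap D)$, one can take $\M_0=\BL(C_0)$, $\M_1=\BL(C_1)$ with $C_0\neq C_1$ but $C_0\cap C_1=\{\vct 0\}$ (two distinct rays through the origin in $\IR^2$), and $\N=\BL(D)$ for a suitable $D$; then $\M_0\cap\M_1$ consists only of points of the form $(\vct 0,\vct z)$, which one arranges to be empty by a dimension count, while $\M_0\wedge\N$ and $\M_1\wedge\N$ each contain $(\vct 0,\vct z)$ for $\vct z$ in a common large set. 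I would present whichever version is shortest; the elementwise product-of-rays example is almost certainly the most transparent, so I would lead with that and omit the lift-based variant. The rest of the proposition is purely formal and should take only a few lines.
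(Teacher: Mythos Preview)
Your approach matches the paper's: direct elementwise verification of the De Morgan law, of distributivity over~$\cup$, and of the one-sided inclusion over~$\cap$. For the counterexamples the paper also works with direct products $\M_i=M_i\times M_i'$, $\N=N\times N'$, but makes the opposite choice from yours: it fixes the first factors $M_0=M_1=N$ and lets $M_0',M_1',N'$ be three pairwise distinct lines in a common $2$-plane~$L$, so that $(M_0'\cap M_1')+N'=N'\neq L=(M_0'+N')\cap(M_1'+N')$; deleting the origin from the primed sets then yields the disjointness example. Your choice $\N=\IR^2\times\IR^2$ also works and is arguably simpler for disjointness. Two small remarks: your disjointness example already gives strict inclusion (since $\emptyset\wedge\N=\emptyset$), so no separate ``bookkeeping'' is needed; and the $\BL$-based alternative you sketch cannot achieve $\M_0\cap\M_1=\emptyset$, because $\BL(C_0)\cap\BL(C_1)$ always contains $(\vct0,\vct0)$---you are right to lead with the product-of-rays construction.
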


\begin{proof}
De Morgan's Law follows directly from the definitions of~$\wedge,\vee,\rev$. Furthermore, we have
\begin{align*}
   (\M_0\cup\M_1) \wedge \N & = \big\{(\vct x,\vct x'+\vct y')\mid (\vct x,\vct x')\in\M_0\cup\M_1 , (\vct x,\vct y')\in\N\big\}
\\ & = \big\{(\vct x,\vct x'+\vct y')\mid (\vct x,\vct x')\in\M_0 , (\vct x,\vct y')\in\N\big\}\cup \big\{(\vct x,\vct x'+\vct y')\mid (\vct x,\vct x')\in\M_1 , (\vct x,\vct y')\in\N\big\}
\\ & = (\M_0\wedge\N) \cup (\M_1\wedge\N) ,
\\ (\M_0\cap\M_1) \wedge \N & = \big\{(\vct x,\vct x'+\vct y')\mid (\vct x,\vct x')\in\M_0\cap\M_1 , (\vct x,\vct y')\in\N\big\}
\\ & \subseteq \big\{(\vct x,\vct x'+\vct y')\mid (\vct x,\vct x')\in\M_0 , (\vct x,\vct y')\in\N\big\}\cap \big\{(\vct x,\vct x'+\vct y')\mid (\vct x,\vct x')\in\M_1 , (\vct x,\vct y')\in\N\big\}
\\ & = (\M_0\wedge\N) \cap (\M_1\wedge\N) .
\end{align*}
If $\M_0=M_0\times M_0'$, $\M_1=M_1\times M_1'$, $\N=N\times N'$, then
\begin{align*}
   (\M_0\cap\M_1) \wedge \N & = (M_0\cap M_1\cap N)\times \big((M_0'\cap M_1')+N\big)
\\ & \stackrel{(*)}{\neq} (M_0\cap M_1\cap N)\times \big((M_0'+N') \cap (M_1'+N')\big) = (\M_0 \wedge \N)\cap(\M_1 \wedge\N) ,
\end{align*}
where the inequality $(*)$ holds for example in the case where $M_0=N_1=N\neq\emptyset$ and $M_0',M_1',N'$ are pairwise linear independent lines lying in a $2$-dimensional linear space $L$, in which case $(M_0'\cap M_1')+N'=N'\neq L=(M_0'+N') \cap (M_1'+N')$. Replacing in the above example $M_0',M_1',N'$ by $M_0'\setminus\{\vct0\},M_1'\setminus\{\vct0\},N'\setminus\{\vct0\}$ yields $\M_0\cap\M_1=\emptyset$ but $(\M_0\wedge\N) \cap (\M_1\wedge\N)=M\times (L\setminus\{\vct0\})\neq\emptyset$.
\end{proof}

We finish this section with the announced result about the product structure of the biconic $\sigma$-algebra.

\begin{proposition}\label{prop:bicon-prod}
The biconic $\sigma$-algebra $\hmB(\IR^d,\IR^d)$ is the product algebra
\begin{equation}\label{eq:B(E,E')=B(E)xB(E')}
  \hmB(\IR^d,\IR^d)=\hmB(\IR^d)\otimes\hmB(\IR^d) .
\end{equation}
\end{proposition}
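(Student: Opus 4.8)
The plan is to realise both $\hmB(\IR^d,\IR^d)$ and $\hmB(\IR^d)\otimes\hmB(\IR^d)$ as the pullback, along one and the same map, of a Borel $\sigma$-algebra to which the classical product theorem applies. The easy half is quickly dispatched: for $M,M'\in\hmB(\IR^d)$ the rectangle $M\times M'$ is Borel in $\IR^{d+d}$ and satisfies $(\lambda,\lambda')(M\times M')=(\lambda M)\times(\lambda'M')=M\times M'$ for all $\lambda,\lambda'>0$, so $M\times M'\in\hmB(\IR^d,\IR^d)$; as $\hmB(\IR^d,\IR^d)$ is a $\sigma$-algebra it then contains the $\sigma$-algebra generated by all such rectangles, i.e.\ $\hmB(\IR^d)\otimes\hmB(\IR^d)\subseteq\hmB(\IR^d,\IR^d)$. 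Everything below is aimed at the reverse inclusion.

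First I would set up the \emph{radial coordinate map}. Adjoin to the sphere an isolated point $\ast$, put $\hat S:=S^{d-1}\sqcup\{\ast\}$, and define $\nu\colon\IR^d\to\hat S$ by $\nu(\vct x):=\vct x/\|\vct x\|$ for $\vct x\neq\vct 0$ and $\nu(\vct 0):=\ast$. One checks immediately that $\nu$ is Borel measurable ($\nu^{-1}(\{\ast\})=\{\vct 0\}$, and on the open set $\IR^d_*$ the map $\vct x\mapsto\vct x/\|\vct x\|$ is continuous), that $\nu(\lambda\vct x)=\nu(\vct x)$ for every $\lambda>0$, and, by the same token, that $\nu\times\nu\colon\IR^{d+d}\to\hat S\times\hat S$ is Borel measurable and constant on $(\lambda,\lambda')$-orbits. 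The first real step is then to prove the two identities
\[
  \hmB(\IR^d)=\nu^{-1}\big(\mB(\hat S)\big) ,
  \qquad
  \hmB(\IR^d,\IR^d)=(\nu\times\nu)^{-1}\big(\mB(\hat S\times\hat S)\big) .
\]
For ``$\supseteq$'' in either identity one only needs Borel measurability of $\nu$ (resp.\ $\nu\times\nu$) together with its invariance along scaling orbits, which forces the preimage to be (bi)conic. For ``$\subseteq$'' one observes that, by scaling invariance, a (bi)conic set is a union of fibres of $\nu$ (resp.\ $\nu\times\nu$) --- the fibres being the open rays together with the singleton $\{\vct 0\}$ in each coordinate --- whence $M=\nu^{-1}(\nu(M))$, resp.\ $\M=(\nu\times\nu)^{-1}\big((\nu\times\nu)(\M)\big)$; it remains to see that the images $\nu(M)$ and $(\nu\times\nu)(\M)$ are Borel.

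Computing those images is the step I expect to require actual care, and I would do it by stratifying $\IR^{d+d}$ according to which of the two coordinates is the origin, i.e.\ along $\IR^{d+d}=\big(\{\vct 0\}\sqcup\IR^d_*\big)\times\big(\{\vct 0\}\sqcup\IR^d_*\big)$. On the top stratum $\IR^d_*\times\IR^d_*$ the image of $\M$ under $\nu\times\nu$ is simply $\M\cap(S^{d-1}\times S^{d-1})$, which is Borel in $S^{d-1}\times S^{d-1}$. On a stratum in which one coordinate is pinned to $\vct 0$, the relevant part of $\M$ is determined by a section of $\M$, which is Borel because sections of Borel sets are Borel, and which is moreover a conic Borel set by scaling invariance; intersecting that section with $S^{d-1}$ gives a Borel subset of the sphere, so again the image is Borel. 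The stratum $\{\vct 0\}\times\{\vct 0\}$ contributes either $\emptyset$ or $\{(\ast,\ast)\}$. Assembling the four (disjoint, relatively Borel) pieces yields $(\nu\times\nu)(\M)\in\mB(\hat S\times\hat S)$, and the one-coordinate version of the same bookkeeping (where $\nu(M)=(M\cap S^{d-1})\cup(\{\ast\}$ or $\emptyset)$) yields $\nu(M)\in\mB(\hat S)$; this completes the two identities above.

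The proof then finishes formally. Since $\hat S$ --- a second-countable space with one extra isolated point --- is again second countable, the standard product theorem gives $\mB(\hat S\times\hat S)=\mB(\hat S)\otimes\mB(\hat S)$, so $\mB(\hat S\times\hat S)$ is generated by the rectangles $E\times E'$ with $E,E'\in\mB(\hat S)$. Taking preimages commutes with generating $\sigma$-algebras, and $(\nu\times\nu)^{-1}(E\times E')=\nu^{-1}(E)\times\nu^{-1}(E')$ is a product of two conic Borel sets by the first identity; hence
\[
  \hmB(\IR^d,\IR^d)=(\nu\times\nu)^{-1}\big(\mB(\hat S\times\hat S)\big)
  \subseteq\sigma\big\{\,M\times M'\bigm| M,M'\in\hmB(\IR^d)\,\big\}=\hmB(\IR^d)\otimes\hmB(\IR^d) ,
\]
which is the inclusion that was missing. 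One could just as well skip the auxiliary space $\hat S$ and run the stratification argument directly, invoking $\mB(S^{d-1}\times S^{d-1})=\mB(S^{d-1})\otimes\mB(S^{d-1})$ on the top stratum and treating the origin by hand; the mathematical content, and the one genuinely imported fact --- that Borel $\sigma$-algebras of second-countable spaces multiply --- are unchanged.
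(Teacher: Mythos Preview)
Your proof is correct and follows essentially the same route as the paper's: both reduce the nontrivial inclusion to the classical identity $\mB(S^{d-1}\times S^{d-1})=\mB(S^{d-1})\otimes\mB(S^{d-1})$ for second-countable spaces, after stratifying $\IR^{d+d}$ according to which coordinates vanish. The paper carries out that stratification directly---exactly the alternative you describe in your last paragraph---while you wrap it in the one-point extension $\hat S=S^{d-1}\sqcup\{\ast\}$; the mathematical content is the same.
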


From this proposition we can deduce the following useful lemma about biconic measures.

\begin{lemma}\label{lem:coinc-dir-prod}
If $\mu_1,\mu_2\in\hmM(\IR^d,\IR^d)$ are such that $\mu_1(M\times M')=\mu_2(M\times M')$ for all $M,M'\in\hmB(\IR^d)$, then $\mu_1=\mu_2$.
\end{lemma}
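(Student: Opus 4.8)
The plan is to deduce the lemma from the standard uniqueness of finite signed measures agreeing on a generating $\pi$-system, with Proposition~\ref{prop:bicon-prod} supplying exactly such a $\pi$-system. First I would introduce the collection of biconic direct products
\[ \mathcal{R} := \big\{ M\times M' \mid M,M'\in\hmB(\IR^d)\big\}\subseteq\hmB(\IR^d,\IR^d) . \]
This $\mathcal{R}$ is a $\pi$-system that contains the whole space: since $\hmB(\IR^d)$ is a $\sigma$-algebra (hence closed under finite intersections), the identity $(M_1\times M_1')\cap(M_2\times M_2')=(M_1\cap M_2)\times(M_1'\cap M_2')$ shows that $\mathcal{R}$ is closed under finite intersections, and $\IR^{d+d}=\IR^d\times\IR^d\in\mathcal{R}$. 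By Proposition~\ref{prop:bicon-prod} we have $\sigma(\mathcal{R})=\hmB(\IR^d)\otimes\hmB(\IR^d)=\hmB(\IR^d,\IR^d)$.

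Next I would pass to the difference $\nu:=\mu_1-\mu_2\in\hmM(\IR^d,\IR^d)$, a real-valued $\sigma$-additive set function on $\hmB(\IR^d,\IR^d)$, which vanishes on every member of $\mathcal{R}$ by hypothesis. Set $\mathcal{D}:=\{\M\in\hmB(\IR^d,\IR^d)\mid\nu(\M)=0\}$, and check that $\mathcal{D}$ is a Dynkin ($\lambda$-)system: $\IR^{d+d}\in\mathcal{D}$ because $\IR^{d+d}\in\mathcal{R}$; if $\M\subseteq\N$ with $\M,\N\in\mathcal{D}$ then $\nu(\N\setminus\M)=\nu(\N)-\nu(\M)=0$, where real-valuedness of $\nu$ excludes any $\infty-\infty$ ambiguity, so $\N\setminus\M\in\mathcal{D}$; and if $\M_1\subseteq\M_2\subseteq\cdots$ all lie in $\mathcal{D}$, then telescoping and countable additivity give $\nu\big(\bigcup_j\M_j\big)=\nu(\M_1)+\sum_{j\geq1}\big(\nu(\M_{j+1})-\nu(\M_j)\big)=\lim_j\nu(\M_j)=0$, so $\bigcup_j\M_j\in\mathcal{D}$.

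Since $\mathcal{R}\subseteq\mathcal{D}$ and $\mathcal{R}$ is a $\pi$-system, Dynkin's $\pi$--$\lambda$ theorem yields $\hmB(\IR^d,\IR^d)=\sigma(\mathcal{R})\subseteq\mathcal{D}$, i.e.\ $\nu\equiv0$ and hence $\mu_1=\mu_2$. The argument is entirely routine; the only mild point requiring care — and the reason for running a Dynkin-system argument rather than quoting a monotone-class theorem for positive measures — is that $\hmM(\IR^d,\IR^d)$ consists of \emph{signed} measures, so that stability of $\mathcal{D}$ under proper differences must be argued using finiteness (real-valuedness) of $\nu$. The one genuinely substantial ingredient is Proposition~\ref{prop:bicon-prod}, guaranteeing that the biconic direct products generate all of $\hmB(\IR^d,\IR^d)$.
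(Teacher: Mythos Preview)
Your proof is correct and follows essentially the same approach as the paper: both identify the class of biconic direct products as a $\pi$-system generating $\hmB(\IR^d,\IR^d)$ (via Proposition~\ref{prop:bicon-prod}) and then invoke the standard uniqueness of (signed) measures agreeing on such a system. The only difference is cosmetic: the paper cites~\cite[Lem.~1.9.4]{Bog:07} for the uniqueness step, whereas you unpack the Dynkin $\pi$--$\lambda$ argument by hand, taking appropriate care with the signed-measure case.
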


\begin{proof}
The class $\{M\times M'\mid M,M'\in\hmB(\IR^d)\}$ is closed under finite intersections. If two measures $\mu_1,\mu_2\in\hmM(\IR^d,\IR^d)$ coincide on such a class, then they also coincide on the $\sigma$-algebra generated by it, cf.~\cite[Lem.~1.9.4]{Bog:07}, which is $\hmB(\IR^d,\IR^d)$ by Proposition~\ref{prop:bicon-prod}.
\end{proof}

\begin{proof}[Proof of Proposition~\ref{prop:bicon-prod}]
The inclusion $\supseteq$ in~\eqref{eq:B(E,E')=B(E)xB(E')} follows from the fact that $\hmB(\IR^d,\IR^d)$ is a $\sigma$-algebra, that contains all sets of the form $M\times M'$ with $M,M'\in\hmB(\IR^d)$. For the other inclusion let $\M\in\hmB(\IR^d,\IR^d)$ and decompose it into
\begin{align*}
   \M & = \M_* \;\uplus\; \M_1 \;\uplus\; \M_2 , &  \M_* & := \M\cap \big(\IR^d_*\times\IR^d_*\big) , 
\\ \M_1 & := \M\cap\big(\IR^d\times\{\vct0\}\big) , & \M_2 & := \M\cap\big(\{\vct0\}\times\IR^d_*\big) .
\end{align*}
Clearly, $\M_1,\M_2\in \hmB(\IR^d)\otimes\hmB(\IR^d)$, as both are of the form $M\times M'$ with $M,M'\in\hmB(\IR^d)$.
As for the remaining set $\M_*$, we make the following definition
\begin{align}
   \hmB^*(\IR^d) & := \{M\in\hmB(\IR^d)\mid \M\subseteq\IR^d_*\} ,
 & \hmB^*(\IR^d,\IR^d) & := \{\M\in\hmB(\IR^d,\IR^d)\mid \M\subseteq\IR^d_*\times\IR^d_*\}
\label{eq:def-B^*(E,E)}
\end{align}
Using the functions $\IR^d_*\to S^{d-1}$, $\vct x\mapsto\frac{\vct x}{\|\vct x\|}$, and $\IR^d_*\times\IR^d_*\to S^{d-1}\times S^{d-1}$, $(\vct x,\vct x')\mapsto(\frac{\vct x}{\|\vct x\|},\frac{\vct x'}{\|\vct x'\|})$, we see that
  \[ \hmB^*(\IR^d)\equiv \mB(S^{d-1}) ,\qquad \hmB^*(\IR^d,\IR^d)\equiv \mB(S^{d-1}\times S^{d-1}) . \]
We have $\mB(S^{d-1}\times S^{d-1})=\mB(S^{d-1})\otimes\mB(S^{d-1})$, cf.~\cite[Lem.~6.4.2]{Bog:07}, so that 
 \[ \hmB^*(\IR^d,\IR^d)\equiv \mB(S^{d-1}\times S^{d-1})=\mB(S^{d-1})\otimes\mB(S^{d-1})\equiv \hmB^*(\IR^d)\otimes\hmB^*(\IR^d)\subset \hmB(\IR^d)\otimes\hmB(\IR^d), \]
This shows that $\M_*\in\hmB(\IR^d)\otimes\hmB(\IR^d)$, which finishes the proof.
\end{proof}

\subsection{Support measures}\label{sec:suppmeas}

Recall that the support measures localize both the $\vct v$-vector and the curvature measures, and are given by $\Theta_k(C,\M) = \Prob\big\{\bproj_C(\vct g)\in\mSkel_k(C)\cap\M\big\}$ where $\vct g\in\IR^d$ is a standard Gaussian vector, cf.~\eqref{eq:def-Theta-vect-Prob-Skel}. By~\eqref{eq:def-Theta_k-dirprod} and Lemma~\ref{lem:coinc-dir-prod} we can write the support measures in the form
\begin{align*}
   \Theta_k(C,\cdot) & = \sum_{L\in\mL_k(C)} \gamma_L(C\cap L\cap \cdot)\otimes\gamma_{L^\bot}(C^\polar\cap L^\bot\cap\cdot) = \sum_{L\in\mL_k(C)} \Psi_k(C\cap L,\cdot)\otimes\Psi_{d-k}(C^\polar\cap L^\bot,\cdot) .
\end{align*}
In particular, we have (note that $\Psi_d=\Phi_d$ and $\lin(C^\polar)=0$ iff $\gamma_d(C)\neq0$)
\begin{align*}
   \Theta_d(C,\cdot) & = \Phi_d(C,\cdot) \otimes \Dir , & \Theta_0(C,\cdot) & = \Dir \otimes \Phi_d(C^\polar,\cdot) .
\end{align*}
The following proposition lists further properties of the support measures, similar to Proposition~\ref{prop:props-polyh-curv-meas}.

\begin{proposition}\label{prop:props-supp-meas}
Let $C,D\in\P(\IR^d)$ and $0\leq k\leq d$.
\begin{enumerate}
  \item \emph{concentration:} for all $\M\in\hmB(\IR^d,\IR^d)$,
  \begin{align*}
     \Theta_k(C,\M) & = \Theta_k\big(C,\M\cap \mSkel_k(C)\big) .
  \end{align*}
        In particular, $\Theta_k(C,\M) = \Theta_k\big(C,\M\cap \BL(C)\big)$.
  \item \emph{orthogonal invariance:} for all $\vct Q\in O(d)$ and all $\M\in\hmB(\IR^d,\IR^d)$,
  \begin{align*}
     \Theta_k(\vct Q C,\vct Q \M) & = \Theta_k(C,\M) .
  \end{align*}
  \item \emph{locality:} for all $\M\in\hmB(\IR^d,\IR^d)$,
  \begin{align}\label{eq:loc-supp-meas-sharp}
     \Theta_k(C,\M) & \leq \Theta_k(D,\M) , & \text{if } \;\; \mSkel_k(C)\cap\M & \subseteq \BL(D)\cap \M .
  \end{align}
        In particular,
  \begin{align*}
     \Theta_k(C,\M) & = \Theta_k(D,\M) , & \text{if } \;\; \mSkel_k(C)\cap\M & =\mSkel_k(D)\cap \M
  \\ & & \text{or } \;\; \BLs(C)\cap\M & = \BLs(D)\cap \M
  \\ & & \text{or } \;\; \BL(C)\cap\M & = \BL(D)\cap \M .
  \end{align*}
  \item \emph{product rule:} for all $\M,\N\in\hmB(\IR^d,\IR^d)$,
  \begin{align*}
     \Theta_k(C\times D,\M\bprod\N) & = \sum_{i+j=k} \Theta_i(C,\M)\;\Theta_j(D,\N) .
  \end{align*}
  \item \emph{polarity:} for all $\M\in\hmB(\IR^d,\IR^d)$,
  \begin{align}\label{eq:polarity-Theta_k}
     \Theta_k(C^\polar,\M) & = \Theta_{d-k}\big(C,\rev(\M)\big) .
  \end{align}
  \item \emph{additivity:} for all $\M\in\hmB(\IR^d)$,
  \begin{align*}
     \Theta_k(C+D,\M) + \Theta_k(C\cap D,\M) & = \Theta_k(C,\M) + \Theta_k(D,\M) , & \text{if } \;\; C+D & = C\cup D .
  \end{align*}
  \item \emph{weak continuity:} for all open sets $\mO\in\hmB(\IR^d,\IR^d)$, 
  \begin{align*}
     \liminf_i \; \Theta_k(C_i,\mO) & \geq \Theta_k(C,\mO) , & \text{if } \;\; \big\{C_i\mid i\in\IN\big\}\subseteq\P(\IR^d) \text{ such that } \lim_{i\to\infty}C_i & = C .
  \end{align*}
\end{enumerate}
\end{proposition}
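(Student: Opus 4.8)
The plan is to prove the seven properties in order, leveraging the probabilistic representation $\Theta_k(C,\M) = \Prob\{\bproj_C(\vct g)\in\mSkel_k(C)\cap\M\}$ from~\eqref{eq:def-Theta-vect-Prob-Skel} together with the algebraic identities relating $\bproj_C$, $\mSkel_k$, $\BL$, $\rev$, $\bprod$, and the $\Gl_d$-action collected in Proposition~\ref{prop:biconic-proj} and equations~\eqref{eq:biconic-prod-idents}. Properties~(1) (concentration) and~(2) (orthogonal invariance) are immediate from this representation: for~(1) one intersects the event with $\mSkel_k(C)$, which is harmless since $\bproj_C(\vct g)\in\mSkel_k(C)$ is part of the defining event; the ``in particular'' follows since $\mSkel_k(C)\subseteq\BL(C)$. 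For~(2) one uses $\vct Q\bproj_C=\bproj_{\vct QC}$, $\vct Q\mSkel_k(C)=\mSkel_k(\vct QC)$, and the rotation invariance of the Gaussian measure (i.e. $\vct Q\vct g$ has the same distribution as $\vct g$). Property~(5) (polarity) likewise follows by composing with $\rev$: since $\rev\circ\bproj_C=\bproj_{C^\polar}$ and $\rev(\mSkel_k(C))=\mSkel_{d-k}(C^\polar)$, we get $\Prob\{\bproj_{C^\polar}(\vct g)\in\mSkel_{d-k}(C^\polar)\cap\M\} = \Prob\{\bproj_C(\vct g)\in\mSkel_k(C)\cap\rev(\M)\}$, which is exactly~\eqref{eq:polarity-Theta_k} after relabeling $k\mapsto d-k$.

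Next I would handle~(3) (locality). The key observation is that if $\bproj_C(\vct x)\in\BL(D)$, then $\bproj_C(\vct x)=\bproj_D(\vct x)$: indeed, writing $\bproj_C(\vct x)=(\vct y,\vct y')$ with $\vct y\in D$, $\vct y'\in D^\polar$, $\langle\vct y,\vct y'\rangle=0$, $\vct y+\vct y'=\vct x$, Moreau's decomposition theorem for $D$ forces $(\vct y,\vct y')=\bproj_D(\vct x)$ by uniqueness. Hence on the event $\{\bproj_C(\vct g)\in\mSkel_k(C)\cap\M\subseteq\BL(D)\cap\M\}$ we have $\bproj_C(\vct g)=\bproj_D(\vct g)$, so this event is contained in $\{\bproj_D(\vct g)\in\mSkel_k(C)\cap\M\}$; but we also need the point to lie in $\mSkel_k(D)$. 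Here one uses that $\bproj_D(\vct g)\in\BLs(D)$ almost surely (Proposition~\ref{prop:biconic-proj}), so it lies in some $\mSkel_j(D)$; and if it equals a point of $\mSkel_k(C)$ then, since the first component lies in $\inter_L(C\cap L)$ for some $L\in\mL_k(C)$ and must also lie in the relative interior of a face of $D$ of matching span, one concludes $j=k$. This gives the inequality~\eqref{eq:loc-supp-meas-sharp}; the three ``in particular'' equalities follow by applying it symmetrically in $C,D$, noting $\mSkel_k\subseteq\BLs\subseteq\BL$ and that equality on the larger sets restricts to equality on the smaller ones. Property~(4) (product rule) follows from the identities $\bproj_{C\times D}=\bproj_C\bprod\bproj_D$, $\mSkel_k(C\times D)=\Disjunionu{i+j=k}\mSkel_i(C)\bprod\mSkel_j(D)$ in~\eqref{eq:biconic-prod-idents}, together with the independence of the two Gaussian blocks: $\Theta_k(C\times D,\M\bprod\N)=\Prob\{\bproj_C(\vct g)\bprod\bproj_D(\vct h)\in(\Disjunionu{i+j=k}\mSkel_i(C)\bprod\mSkel_j(D))\cap(\M\bprod\N)\}$, and since the product event factors and the union is disjoint, this splits as $\sum_{i+j=k}\Theta_i(C,\M)\Theta_j(D,\N)$.

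Property~(6) (additivity) is the main obstacle. For this I would use the representation~\eqref{eq:def-Theta-vect-Prob-Skel}: for any $\vct x$, one wants to show that the indicators satisfy $\mathbf{1}_{\mSkel_k(C+D)\cap\M}(\bproj_{C+D}(\vct x)) + \mathbf{1}_{\mSkel_k(C\cap D)\cap\M}(\bproj_{C\cap D}(\vct x)) = \mathbf{1}_{\mSkel_k(C)\cap\M}(\bproj_C(\vct x)) + \mathbf{1}_{\mSkel_k(D)\cap\M}(\bproj_D(\vct x))$ for $\gamma_d$-almost all $\vct x$, then integrate. When $C+D=C\cup D$, the four projection points $\bproj_C(\vct x),\bproj_D(\vct x),\bproj_{C\cap D}(\vct x),\bproj_{C+D}(\vct x)$ are intimately related: one shows (using the polar identities $(C\cup D)^\polar=C^\polar\cap D^\polar$, $(C\cap D)^\polar=C^\polar+D^\polar$ and Moreau's theorem) that $\bproj_{C+D}(\vct x)$ equals whichever of $\bproj_C(\vct x),\bproj_D(\vct x)$ has its primal part in $C\cup D$, and $\bproj_{C\cap D}(\vct x)$ is governed by the dual picture. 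A careful case analysis according to which face-interiors $\Pi_{C+D}(\vct x)$ and $\Pi_{C\cap D}(\vct x)$ land in — combined with the fact that the faces of $C\cap D$ and $C+D$ are obtained from those of $C$ and $D$ in a controlled way when $C\cup D$ is convex — yields the pointwise inclusion–exclusion identity, up to a $\gamma_d$-null set (the ``bad'' set where $\bproj$ lands outside $\BLs$, handled by Proposition~\ref{prop:biconic-proj}). Integrating gives~(6), and setting $\M=\IR^{d+d}$ recovers the additivity of the curvature measures (and of $\vct v$) cited in Proposition~\ref{prop:props-polyh-curv-meas} and equation~\eqref{eq:additivity-v}. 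Finally,~(7) (weak continuity) follows from a standard argument: for an open biconic set $\mO$, the set $\{\vct x\mid\bproj_C(\vct x)\in\mO\}$ is open up to a null set, and since $\bproj$ depends continuously on the cone (in the conic Hausdorff metric) off a null set, the portmanteau/Fatou-type lemma gives $\liminf_i\gamma_d(\{\bproj_{C_i}(\vct g)\in\mO\})\geq\gamma_d(\{\bproj_C(\vct g)\in\mO\})$; one must only check that the limit point, lying almost surely in $\BLs(C)$, is counted in the correct skeleton index $k$, which again follows from the relative-interior structure. I expect the bookkeeping in~(6) — tracking how faces of $C,D,C\cap D,C+D$ correspond under the hypothesis $C\cup D\in\P(\IR^d)$ — to be where most of the care is needed.
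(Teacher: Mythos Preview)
Your treatment of (1), (2), (4), and (5) is correct and matches the paper's proof essentially verbatim: all four follow from the probabilistic representation~\eqref{eq:def-Theta-vect-Prob-Skel} together with the identities around~\eqref{eq:biconic-prod-idents}.

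For (3), (6), (7) you take a genuinely different route from the paper. The paper does \emph{not} argue directly; it invokes the Steiner formula~\eqref{eq:Steiner-Theta} via Lemma~\ref{lem:loc-suppmeas}: if $\M\subseteq\BL(C)\cap\BL(D)$ then the left-hand side of~\eqref{eq:Steiner-Theta} is identical for $C$ and $D$ (by Moreau's uniqueness, exactly your observation), and since the functions $r\mapsto\Prob\{\chi_k^2\geq r\}$ are linearly independent, the full vector $(\Theta_0,\ldots,\Theta_d)$ agrees. This sidesteps the need to match skeleton indices, and the same Steiner machinery is what the paper cites for (6) and (7), deferring details to~\cite[Sec.~6.5]{SW:08}.

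Your direct approach to (3) has a gap precisely at the skeleton-matching step. You write that the first component ``must also lie in the relative interior of a face of $D$ of matching span, one concludes $j=k$''. This is not true pointwise: take $C=\IR^2$, $D$ the nonnegative $x$-axis; then $((1,0),(0,0))\in\mSkel_2(C)\cap\mSkel_1(D)\cap\BL(D)$, so the spans do not match. What \emph{is} true is that the mismatch set is $\Theta_k(D,\cdot)$-null: for $L\in\mL_k(C)$ and $L'\in\mL_j(D)$, the product measure $\gamma_{L'}\otimes\gamma_{L'^\bot}$ gives zero mass to $(L\cap L')\times(L^\bot\cap L'^\bot)$ unless both $L'\subseteq L$ and $L\subseteq L'$, forcing $L=L'$ and hence $j=k$. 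With this null-set argument inserted, your route does go through and is arguably more elementary than the paper's Steiner detour. But as written, the justification you give for $j=k$ is the wrong one.

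For (6), your pointwise inclusion--exclusion plan is plausible but substantially harder than you indicate: the ``controlled way'' in which faces of $C\cap D$ and $C\cup D$ arise from those of $C$ and $D$ does not immediately yield the indicator identity you need, and the case analysis is delicate. The paper avoids all of this by reading additivity off the Steiner formula (where it reduces to additivity of the map $C\mapsto\Prob\{\bproj_C(\vct g)\in\M,\ \|\Pi_C(\vct g)\|^2\geq r\}$), which is considerably cleaner.
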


\begin{proof}
Properties~(1) and~(2) follow, for example, from the formula~\eqref{eq:def-Theta-vect-Prob-Skel} for the support measures. For the locality property~\eqref{eq:loc-supp-meas-sharp} we will rely on a lemma that follows trivially from the Steiner formulas, which we discuss in Appendix~\ref{sec:Steiner-form}: according to Lemma~\ref{lem:loc-suppmeas}, $\Theta_k(C,\M)=\Theta_k(D,\M)$ if $\M\subseteq\BL(C)\cap\BL(D)$. Using this, we obtain from $\mSkel_k(C)\cap\M \subseteq \BL(D)\cap \M$,
\begin{align*}
   \Theta_k(C,\M) & = \Theta_k\big(C,\mSkel_k(C)\cap\M\big) \leq \Theta_k\big(C,\BL(D)\cap\M\big) = \Theta_k\big(C,\BL(D)\cap\M\cap\mSkel_k(C)\big)
\\ & \hspace{-5.3mm}\stackrel{\text{[Lem.~\ref{lem:loc-suppmeas}]}}{=} \Theta_k\big(D,\BL(D)\cap\M\cap\mSkel_k(C)\big) \leq \Theta_k\big(D,\BL(D)\cap\M\big) = \Theta_k(D,\M) ,
\end{align*}
which shows~\eqref{eq:loc-supp-meas-sharp}. The product rule follows for example from the formula~\eqref{eq:def-Theta-vect-Prob-no-proj} for the support measures and from the formula in~\eqref{eq:biconic-prod-idents} for the lifted $k$-skeleton $\mSkel_k(C\times D)$. The polarity formula follows from~\eqref{eq:def-Theta-vect-Prob-no-proj} and $\rev\big( \mSkel_k(C)\big) = \mSkel_{d-k}(C^\polar)$.

The easiest way to show the additivity and weak continuity of the support measures is to use the Steiner formulas, which we discuss in Appendix~\ref{sec:Steiner-form}. As we will use neither the additivity nor the weak continuity property, we will content ourselves with a description of the general idea in Appendix~\ref{sec:Steiner-form}; for more details we refer to~\cite[Sec.~6.5]{SW:08}.
\end{proof}

We will need the following lemma in the proof of the kinematic formula.

\begin{lemma}\label{lem:decomp-Theta_k(C,M)}
Let $C\in\P(\IR^d)$ and $\M\in\hmB(\IR^d,\IR^d)$ such that $\M\subseteq\BL(C)$. Then for every $0\leq k\leq d$,
  \[ \Theta_k(C,\M) = \sum_{L\in\mL_k(C)} \Theta_k\big(L,\M\cap \BL(L)\big) . \]
\end{lemma}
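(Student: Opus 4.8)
The plan is to expand $\Theta_k(C,\M)$ using the representation of the support measures as a sum over $\mL_k(C)$ and then observe that, because $\M\subseteq\BL(C)$, the inner Gaussian integrals involving the polar cone collapse to full Gaussian volumes on the orthogonal complements. Recall from the discussion preceding the lemma (see the display following \eqref{eq:def-Theta_k-dirprod}, combined with Lemma~\ref{lem:coinc-dir-prod}) that
\begin{equation*}
  \Theta_k(C,\M) = \sum_{L\in\mL_k(C)} \big(\gamma_L(C\cap L\cap\,\cdot\,)\otimes\gamma_{L^\bot}(C^\polar\cap L^\bot\cap\,\cdot\,)\big)(\M) .
\end{equation*}
The key structural fact I would invoke is the facial decomposition of the biconic lift, \eqref{eq:decomp-Nor}: $\BL(C) = \Disjunionu{L\in\mL(C)}\big(\inter_L(C\cap L)\times(C^\polar\cap L^\bot)\big)$. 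Since $\M\subseteq\BL(C)$, for a fixed $L\in\mL_k(C)$ the part of $\M$ that contributes to the $L$-summand (i.e. with first coordinate in $\inter_L(C\cap L)$) automatically has its second coordinate in $C^\polar\cap L^\bot$; moreover, first coordinates in $\inter_L(C\cap L)$ pair in $\BL(C)$ only with the facial block indexed by that same $L$. This is exactly what makes the restriction to $\BL(L) = L\times L^\bot$ lossless.

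Concretely, for each $L\in\mL_k(C)$ I would compare $\Theta_k\big(L,\M\cap\BL(L)\big)$ with the $L$-th summand above. Since $L$ is a linear subspace, $\mL_k(L)=\{L\}$, $L^\polar = L^\bot$, and $\mSkel_k(L) = \inter_L(L)\times\inter_{L^\bot}(L^\bot) = \BL(L) = L\times L^\bot$ (up to the origin components, which carry no $\gamma$-mass when $k\notin\{0,d\}$ and are handled directly in the extreme cases). Hence
\begin{equation*}
  \Theta_k\big(L,\M\cap\BL(L)\big) = \big(\gamma_L(L\cap\,\cdot\,)\otimes\gamma_{L^\bot}(L^\bot\cap\,\cdot\,)\big)(\M\cap\BL(L))
  = \big(\gamma_L\otimes\gamma_{L^\bot}\big)(\M\cap\BL(L)) = \big(\gamma_L\otimes\gamma_{L^\bot}\big)(\M) ,
\end{equation*}
where the last equality uses $\M\subseteq\BL(C)\subseteq$ (after slicing by $L$) $L\times L^\bot$ on the relevant block, so intersecting with $\BL(L)$ removes nothing from the part of $\M$ that the product measure $\gamma_L\otimes\gamma_{L^\bot}$ sees. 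It then remains to check that $\big(\gamma_L(C\cap L\cap\,\cdot\,)\otimes\gamma_{L^\bot}(C^\polar\cap L^\bot\cap\,\cdot\,)\big)(\M)$ equals $\big(\gamma_L\otimes\gamma_{L^\bot}\big)(\M)$: this is where $\M\subseteq\BL(C)$ is used a second time, via \eqref{eq:decomp-Nor}, to conclude that the first coordinate of any point of $\M$ lying over the block of $L$ is in $C\cap L$ and its second coordinate is in $C^\polar\cap L^\bot$, so inserting the indicators of $C\cap L$ and $C^\polar\cap L^\bot$ changes nothing. Summing over $L\in\mL_k(C)$ gives the claim. I would formalize the measure-theoretic step using Lemma~\ref{lem:coinc-dir-prod}, checking the identity on direct products $M\times M'$ and extending.

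The main obstacle is bookkeeping rather than depth: one must be careful that the decomposition \eqref{eq:decomp-Nor} is indexed by \emph{all} $L\in\mL(C)$, not just those of dimension $k$, so a point of $\M$ whose first coordinate lies in $\inter_{L'}(C\cap L')$ for some $L'\in\mL_j(C)$ with $j\neq k$ contributes $0$ to \emph{both} sides (to the left because $\Theta_k$ only sees the block of dimension $k$ through $\mSkel_k(C)$, after noting $\M\subseteq\BL(C)$ forces the pairing; to the right because such a point is not in $\BL(L)$ for any $L\in\mL_k(C)$ unless it lies on a lower-dimensional boundary, which is $\gamma$-null). Handling this cleanly — in particular making precise that for $L\in\mL_k(C)$ the set $\M\cap\BL(L)$ and the $L$-summand of $\Theta_k(C,\cdot)$ "see" the same mass — is the part that needs the concentration property (1) of Proposition~\ref{prop:props-supp-meas} and a short argument that the overlaps among the blocks $\{L\times L^\bot : L\in\mL_k(C)\}$ are contained in proper subspaces and hence Gaussian-null on each factor. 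The extreme cases $k=0$ and $k=d$, where Dirac components appear, I would dispatch separately using $\Theta_0(C,\cdot) = \Dir\otimes\Phi_d(C^\polar,\cdot)$ and $\Theta_d(C,\cdot) = \Phi_d(C,\cdot)\otimes\Dir$ together with $\M\subseteq\BL(C)$.
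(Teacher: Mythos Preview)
Your approach is correct, but it differs from the paper's. The paper argues as follows: since $\M\subseteq\BL(C)\subseteq\bigcup_{L\in\mL(C)}\BL(L)$, one writes $\M=\bigcup_{L\in\mL(C)}(\M\cap\BL(L))$; the pairwise overlaps $\BL(L)\cap\BL(L')$ for $L\neq L'$ have dimension $<d$ and hence $\Theta_k(C,\cdot)$-measure zero, so by inclusion--exclusion $\Theta_k(C,\M)=\sum_{L\in\mL(C)}\Theta_k(C,\M\cap\BL(L))$. Then the locality property (Proposition~\ref{prop:props-supp-meas}(3)) replaces $C$ by $L$ in each summand, and finally $\Theta_k(L,\cdot)=0$ unless $\dim L=k$ prunes the sum to $\mL_k(C)$.

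By contrast, you expand both sides via the explicit product-measure formula $\Theta_k(C,\cdot)=\sum_{L\in\mL_k(C)}\gamma_L(C\cap L\cap\cdot)\otimes\gamma_{L^\bot}(C^\polar\cap L^\bot\cap\cdot)$ and match term by term, using the set identity $\BL(C)\cap(L\times L^\bot)=(C\cap L)\times(C^\polar\cap L^\bot)$ (which follows since $\vct x\in L$, $\vct z\in L^\bot$ already forces $\langle\vct x,\vct z\rangle=0$). This is a perfectly valid alternative and is in some sense more elementary: it avoids invoking the locality property, whose proof in the paper routes through the Steiner formula (Lemma~\ref{lem:loc-suppmeas}). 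Your concerns about bookkeeping, about points landing in blocks of the wrong dimension, and about the extreme cases $k\in\{0,d\}$ are overstated: once you note that $\gamma_L\otimes\gamma_{L^\bot}$ is supported on $L\times L^\bot=\BL(L)$ and that $\M\cap(L\times L^\bot)\subseteq(C\cap L)\times(C^\polar\cap L^\bot)$, the identity of the $L$-summands holds exactly and uniformly in $k$, with no separate case analysis or null-set argument required.
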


\begin{proof}
Since $\M\subseteq\BL(C)$ and $\BL(C)\subseteq\bigcup_{L\in\mL(C)}\BL(L)$, we have $\M=\bigcup_{L\in\mL(C)} (\M\cap \BL(L))$. Furthermore, for any two distinct linear subspaces $L\neq L'$, we have $\dim(\BL(L)\cap\BL(L'))<d$ so that $\Theta_k(C,\BL(L)\cap \BL(L'))=0$. Using the inclusion-exclusion principle, we thus obtain
\begin{align*}
   \Theta_k(C,\M) & = \Theta_k\bigg(C,\bigcup_{L\in\mL(C)}\big(\BL(L)\cap \M\big)\bigg) = \sum_{L\in\mL(C)} \Theta_k\big(C,\M\cap \BL(L)\big) .
\end{align*}
The locality property of the support measures implies $\Theta_k(C,\M\cap \BL(L)) = \Theta_k(L,\M\cap \BL(L))$, and of course $\Theta_k(L,\cdot)=0$ if $\dim L\neq k$, which finishes the proof.
\end{proof}

\subsection{Characterization}\label{sec:char-supp-meas}

The characterization theorem for the support measures enjoys a similar simplicity as the characterization for the polyhedral measures given in Theorem~\ref{thm:conic-char}. The locality assumption is this time expressed in terms of the biconic lift.

\begin{theorem}[Characterization of support measures]\label{thm:biconic-char}
Let $\psi\colon\P(\IR^d)\times\hmB(\IR^d,\IR^d)\to\IR$ be such that for all $C,D\in\P(\IR^d)$, $\M\in\hmB(\IR^d,\IR^d)$, $\vct Q\in O(d)$:
\begin{enumerate}\setcounter{enumi}{-1}
  \item $\psi(C,\cdot)\in\hmM(\IR^d,\IR^d)$,
  \item $\psi(C,\M)=\psi(C,\M\cap \BL(C))$,
  \item $\psi(\vct Q C,\vct Q \M)=\psi(C,\M)$,
  \item $\psi(C,\M)=\psi(D,\M)$ if $\BL(C)\cap \M=\BL(D)\cap \M$.
\end{enumerate}
Then $\psi$ is a linear combination of~$\mDir$ and $\Theta_0,\ldots,\Theta_d$. In this case
\begin{equation}\label{eq:exp-psi-ThetaII}
   \psi = \psi\big(\IR^d,\{\vct0\} \big)\;\mDir + \sum_{k=0}^d \psi\big(\Lk,\IR^{d+d}_*\big)\;\Theta_k ,
\end{equation}
where $\Lk\subseteq\IR^d$ denotes a $k$-dimensional subspace.
\end{theorem}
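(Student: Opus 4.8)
The plan is to mirror the proof of Theorem~\ref{thm:conic-char} as closely as possible, replacing the conic facial decomposition~\eqref{eq:fac-decomp-C} by the biconic decomposition~\eqref{eq:decomp-Nor} of the lift $\BL(C)$, and replacing Lemma~\ref{lem:orth-inv-conic-meas} by a biconic analogue. First I would analyze $\psi$ on a linear subspace $L\subseteq\IR^d$ of dimension $k$. By properties~(0) and~(2), $\psi(L,\cdot)$ is a biconic measure supported on $\BL(L)=L\times L^\bot$. Restricted to $L\times L^\bot$, orthogonal invariance (property~(2) of the theorem, using the subgroup of $O(d)$ preserving $L$, hence $L^\bot$) forces $\psi(L,\cdot)$ to be an $O(L)\times O(L^\bot)$-invariant biconic measure on $L\times L^\bot\cong\IR^k\times\IR^{d-k}$. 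The decomposition of the biconic $\sigma$-algebra into the four corners ($\vct0\times\vct0$, $L_*\times\vct0$, $\vct0\times L^\bot_*$, $L_*\times L^\bot_*$), together with Lemma~\ref{lem:orth-inv-conic-meas} applied on each sphere factor and the fact that $\mB(S\times S')=\mB(S)\otimes\mB(S')$, should pin this down: a scaled biconic Dirac $\mDir$ plus a scaled $\gamma_L\otimes\gamma_{L^\bot}$, with the two mixed corners forced to vanish by orthogonal invariance on the nontrivial sphere factor (there is no invariant measure distinguishing, say, $S^{k-1}\times\{\vct0\}$ unless $k=0$ or $k=d$, and those contributions get absorbed). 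Concretely I expect
\[
  \psi(L,\cdot) = \psi\big(L,\{\vct0\}\big)\,\mDir + \psi\big(L,\IR^{d+d}_*\big)\,\big(\gamma_L\otimes\gamma_{L^\bot}\big),
\]
where $\gamma_L\otimes\gamma_{L^\bot}$ is exactly $\Theta_k(L,\cdot)$ since $L$ has a single $k$-face. By orthogonal invariance the coefficients depend only on $k$, so I may write them as $\psi(\Lk,\{\vct0\})$ (which I then identify with $\psi(\IR^d,\{\vct0\})$, as the origin-corner value is the same for all subspaces by invariance and additivity of $\mDir$) and $\psi(\Lk,\IR^{d+d}_*)$.

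Next, for general $C\in\P(\IR^d)$ I would run the facial decomposition. By~(0) and~(2), $\psi(C,\M)=\psi(C,\M\cap\BL(C))$, and by the first decomposition in~\eqref{eq:decomp-Nor}, $\BL(C)$ is the disjoint union over $L\in\mL(C)$ of $\inter_L(C\cap L)\times(C^\polar\cap L^\bot)$. Since $\psi(C,\cdot)$ is $\sigma$-additive (property~(0)), I can split $\psi(C,\M)=\sum_{L\in\mL(C)}\psi\big(C,\M\cap(\inter_L(C\cap L)\times(C^\polar\cap L^\bot))\big)$. The locality property~(3) is then invoked: for a fixed $L\in\mL_k(C)$ and $\M' := \M\cap(\inter_L(C\cap L)\times(C^\polar\cap L^\bot))$, I need $\BL(C)\cap\M' = \BL(L)\cap\M'$ so that $\psi(C,\M')=\psi(L,\M')$. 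This requires the geometric fact that on the relatively open cell $\inter_L(C\cap L)\times(C^\polar\cap L^\bot)$ the lifts $\BL(C)$ and $\BL(L)=L\times L^\bot$ agree — which is essentially the content of~\eqref{eq:decomp-Nor} read the right way, since the $L$-cell of $\BL(C)$ is contained in $L\times L^\bot$. Substituting the subspace formula for each $\psi(L,\cdot)$ and summing, the $\mDir$-terms collect to $\psi(\IR^d,\{\vct0\})\,\mDir$ (only the vertex cell, or rather the origin, contributes to the Dirac part, and the lineality bookkeeping works out because $\mDir$ is supported at a single point), while the Gaussian-product terms reassemble by the very definition~\eqref{eq:def-Theta_k-dirprod} of $\Theta_k$ into $\sum_k \psi(\Lk,\IR^{d+d}_*)\,\Theta_k(C,\M)$, using Lemma~\ref{lem:coinc-dir-prod} to upgrade from direct products to all of $\hmB(\IR^d,\IR^d)$. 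This yields~\eqref{eq:exp-psi-ThetaII}.

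The main obstacle I anticipate is the careful handling of the mixed and boundary corners of the biconic $\sigma$-algebra on a subspace $L$ — precisely, showing that an $O(k)\times O(d-k)$-invariant biconic measure on $\IR^k\times\IR^{d-k}$ has no mass on $L_*\times\{\vct0\}$ or $\{\vct0\}\times L^\bot_*$ beyond what is already counted, and that the two ``edge'' coefficients are forced to agree across dimensions so that a single biconic $\mDir$ and the family $\Theta_0,\dots,\Theta_d$ suffice. This is where the interplay between Lemma~\ref{lem:orth-inv-conic-meas}, the product structure of Proposition~\ref{prop:bicon-prod}, and the concentration property must be used most delicately; everything else is a fairly direct transcription of the proof of Theorem~\ref{thm:conic-char}. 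A secondary point to get right is verifying the locality hypothesis $\BL(C)\cap\M'=\BL(L)\cap\M'$ cell-by-cell, which again follows from~\eqref{eq:decomp-Nor} but deserves an explicit sentence.
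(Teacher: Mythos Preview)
Your overall architecture is right, and the subspace analysis correctly produces four coefficients $r_k,s_k,t_k,u_k$ corresponding to the four corners $\{\vct0\}\times\{\vct0\}$, $L_*\times\{\vct0\}$, $\{\vct0\}\times L^\bot_*$, $L_*\times L^\bot_*$. But the step you flag as ``the main obstacle'' is a genuine gap, and your proposed resolution does not work. An $O(L)\times O(L^\bot)$-invariant biconic measure on $L\times L^\bot$ can perfectly well have mass on $L_*\times\{\vct0\}$: the measure $\gamma_L\otimes\Dir$ is invariant and is supported exactly there (plus the origin). So orthogonal invariance alone does \emph{not} kill $s_k$ and $t_k$ for $1\le k\le d-1$, and they are not absorbed by any $\Theta_j$ either. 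If you carry them through the facial decomposition using only the first partition in~\eqref{eq:decomp-Nor}, you end up with extra terms of the form $s_k\,\Psi_k(C,\cdot)\otimes\Dir$ and $t_k\,\Lin_k(C,\cdot)\otimes\Psi_{d-k}(C^\polar,\cdot)$, which are not linear combinations of $\mDir,\Theta_0,\dots,\Theta_d$.

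The paper closes this gap by using \emph{both} decompositions in~\eqref{eq:decomp-Nor}. Running the argument with the second partition (relative interiors on the polar side) produces a second expression for $\psi(C,\cdot)$ in which the roles of the $s$- and $t$-terms are different: instead of $\Psi_k(C,\cdot)\otimes\Dir$ one gets $\Psi_k(C,\cdot)\otimes\Lin_{d-k}(C^\polar,\cdot)$, and similarly for the $t$-terms. Evaluating both expressions on test sets of the form $L_*\times\{\vct0\}$ for $L\in\mL_m(C)$ with $1\le m\le d-1$ and $C$ full-dimensional forces $s_1=\cdots=s_{d-1}=0$; symmetrically one gets $t_1=\cdots=t_{d-1}=0$. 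The surviving $s_d$ and $t_0$ are then identified with the coefficients of $\Theta_d$ and $\Theta_0$. A smaller point: your identification $\psi(L,\{\vct0\})=\psi(\IR^d,\{\vct0\})$ comes from the \emph{locality} property~(3) (since $\BL(L)\cap\{(\vct0,\vct0)\}=\BL(\IR^d)\cap\{(\vct0,\vct0)\}$), not from invariance or any additivity of~$\mDir$.
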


\begin{proof}
The proof follows broadly the same line of arguments as the proof of the characterization of the polyhedral measures given in Theorem~\ref{thm:conic-char}. An important tool is again the facial decomposition of~$C$, which yields two partitions of the biconic lift, cf.~\eqref{eq:decomp-Nor}.

Let $\psi\colon\P(\IR^d)\times\hmB(\IR^d,\IR^d)\to\IR$ satisfy the assumptions (0)--(3), i.e., $\psi(C,\cdot)$ is a biconic measure, which is concentrated on~$\BL(C)$, $\psi$~is invariant under (simultaneous) orthogonal transformations, and $\psi(C,\M)=\psi(D,\M)$, if $\BL(C)\cap\M=\BL(D)\cap\M$.

Again, we first consider the case of a linear subspace $L\subseteq\IR^d$. For $M\in\hmB(L)$ define
  \[ \psi_{L,M} \colon \hmB(L^\bot)\to\IR ,\qquad \psi_{L,M}(M') := \psi(L,M\times M') . \]
Orthogonal invariance of~$\psi$ implies that~$\psi_{L,M}$ is an orthogonal invariant conic measure on~$L^\bot$, so that by Lemma~\ref{lem:orth-inv-conic-meas},
\begin{equation}\label{eq:psi(L,MxM')=...-1}
  \psi(L,M\times M') = \psi(L,M\times \{\vct0\})\,\Dir(M') + \psi(L,M\times L^\bot_*)\,\gamma_{L^\bot}(M')
\end{equation}
for every $M'\in\hmB(L^\bot)$. Consider now the measures
\begin{align*}
   \psi_L^{(0)},\psi_L^{(1)} & \colon\hmB(L) \to\IR , & \psi_L^{(0)}(M) & := \psi(L,M\times \{\vct0\}) , & \psi_L^{(1)}(M) & := \psi(L,M\times L^\bot_*) .
\end{align*}
By the same reasoning as above we arrive at
\begin{align*}
   \psi(L,M\times \{\vct0\}) & = \psi(L,\{\vct0\}\times \{\vct0\})\,\Dir(M) + \psi(L,L_*\times \{\vct0\})\,\gamma_L(M) ,
\\ \psi(L,M\times L^\bot_*) & = \psi(L,\{\vct0\}\times L^\bot_*)\,\Dir(M) + \psi(L,L_*\times L^\bot_*)\,\gamma_L(M) .
\end{align*}
Orthogonal invariance of~$\psi$ implies that the constants in the above expressions only depend on~$\dim(L)=:k$, which we denote for simplicity by (using the concentration property~(1) of $\psi$)
\begin{align*}
   r_k & := \psi(L,\{\vct0\}\times \{\vct0\}) , & s_k & := \psi(L,\IR^d_*\times \{\vct0\}) , & t_k & := \psi(L,\{\vct0\}\times \IR^d_*) , & u_k & := \psi(L,\IR^d_*\times \IR^d_*) .
\end{align*}
In fact, the locality property~(3) of~$\psi$ implies
  \[ r_0 = r_1 = \cdots = r_d = \psi(\IR^d,\{\vct0\}\times \{\vct0\}) =: r . \]
We thus obtain from~\eqref{eq:psi(L,MxM')=...-1}, for $M\in\hmB(L)$, $M'\in\hmB(L^\bot)$,
\begin{align}\label{eq:psi(L,MxM')=...-2}
   \psi( L, M\times M') & = r\,\Dir(M)\,\Dir(M') + s_k\,\gamma_L(M)\,\Dir(M') + t_k\,\Dir(M)\,\gamma_{L^\bot}(M') + u_k\,\gamma_L(M)\,\gamma_{L^\bot}(M') .
\end{align}
Note that $s_0=t_d=u_0=u_d=0$.

Consider now a general polyhedral cone $C\in\P(\IR^d)$. The biconic lift of~$C$ has two disjoint decompositions, cf.~\eqref{eq:decomp-Nor}, which we recall here for convenience:
\begin{align}\label{eq:decomp-Nor-recall}
  \BL(C) & = \Disjunionu{L\in\mL(C)} \big(\inter_L(C\cap L)\times (C^\polar\cap L^\bot)\big) = \Disjunionu{L\in\mL(C)} \big((C\cap L)\times \inter_{L^\bot}(C^\polar\cap L^\bot)\big) .
\end{align}
Using the assumptions (0)--(3) and the first decomposition of $\BL(C)$ in~\eqref{eq:decomp-Nor-recall}, we obtain for $M,M'\in\hmB(\IR^d)$
\begin{align*}
   & \psi(C,M\times M') \stackrel{(1)}{=} \psi(C,\BL(C)\cap (M\times M')) \stackrel{\eqref{eq:decomp-Nor-recall}}{=} \sum_{L\in\mL(C)} \psi\big(C,(\inter_L(C\cap L)\cap M)\times (C^\polar\cap L^\bot\cap M')\big)
\\ & \stackrel{(3)}{=} \sum_{L\in\mL(C)} \psi\big(L,(\inter_L(C\cap L)\cap M)\times (C^\polar\cap L^\bot\cap M')\big)
\\ & \stackrel{\eqref{eq:psi(L,MxM')=...-2}}{=} r\,\Dir(M') \sum_{L\in\mL(C)} \Dir(\inter_L(C\cap L)\cap M) + \Dir(M')\sum_{k=1}^d s_k \sum_{L\in\mL_k(C)} \gamma_L(C\cap L\cap M)
\\ & + \sum_{k=0}^{d-1} t_k \!\!\sum_{L\in\mL_k(C)}\!\! \Dir(\inter_L(C\cap L)\cap M)\,\gamma_{L^\bot}(C^\polar\cap L^\bot\cap M') + \sum_{k=1}^{d-1} u_k \!\!\sum_{L\in\mL_k(C)}\!\! \gamma_L(C\cap L\cap M)\,\gamma_{L^\bot}(C^\polar\cap L^\bot\cap M') .
\end{align*}
Note that the origin lies in the relative interior of a unique face of~$C$ of dimension~$\lin(C)$. Using this, we obtain
\begin{align*}
   \sum_{L\in\mL(C)} \Dir(\inter_L(C\cap L)\cap M)\, \Dir(M') & = \Dir(M)\,\Dir(M') = \mDir(M\times M') \quad \text{ and}
\\ \sum_{k=0}^d t_k\sum_{L\in\mL_k(C)} \Dir(\inter_L(C\cap L)\cap M)\,\gamma_{L^\bot}(C^\polar\cap L^\bot\cap M') & = t_{\lin(C)} \Dir(M)\sum_{L\in\mL_{\lin(C)}(C)}\gamma_{L^\bot}(C^\polar\cap L^\bot\cap M') .
\end{align*}
Furthermore, note that
\begin{align*}
   \sum_{L\in\mL_k(C)} \gamma_L(C\cap L\cap M) & =\Psi_k(C,M) , & \sum_{L\in\mL_k(C)} \gamma_{L^\bot}(C^\polar\cap L^\bot\cap M') & =\Psi_{d-k}(C^\polar,M') ,
\\ & \hspace{-33.3mm}\sum_{L\in\mL_k(C)} \gamma_L(C\cap L\cap M)\,\gamma_{L^\bot}(C^\polar\cap L^\bot\cap M') = \Theta_k(C,M\times M') . \hspace{-5cm}
\end{align*}
Using Lemma~\ref{lem:coinc-dir-prod}, we arrive at
\begin{equation}\label{eq:psi=lin-comb-1}
   \psi(C,\cdot) = r\mDir + \sum_{k=1}^d s_k \Psi_k(C,\cdot)\otimes \Dir + \sum_{k=0}^{d-1} t_k \Lin_k(C,\cdot)\otimes \Psi_{d-k}(C^\polar,\cdot) + \sum_{k=1}^{d-1} u_k \Theta_k(C,\cdot) .
\end{equation}
It remains to show that $s_1=\cdots=s_{d-1}=t_1=\cdots=t_{d-1}=0$. This is achieved by using the second decomposition of $\BL(C)$ in~\eqref{eq:decomp-Nor-recall}. Analogously to the above,
\begin{align*}
   \psi(C,M\times M') & = \sum_{k=0}^d \; \sum_{L\in\mL_k(C)} \psi\big(L,(C\cap L\cap M)\times (\inter_{L^\bot}(C^\polar\cap L^\bot)\cap M')\big) ,
\end{align*}
and from this it follows
\begin{equation}\label{eq:psi=lin-comb-2}
   \psi(C,\cdot) = r\mDir + \sum_{k=1}^d s_k \Psi_k(C,\cdot)\otimes \Lin_{d-k}(C^\polar,\cdot) + \sum_{k=0}^{d-1} t_k  \Dir\otimes \Psi_{d-k}(C^\polar,\cdot) + \sum_{k=1}^{d-1} u_k \Theta_k(C,\cdot) .
\end{equation}

Now, let $C\in\P(\IR^d)$ such that $\inter(C)\neq\emptyset$ (equivalently, $\lin(C^\polar)=0$). Then for every $L\in\mL_m(C)$ with $1\leq m\leq d-1$, we have $\Psi_k(C,L)=0$ if $k>m$, and overall
\begin{align*}
   \psi(C,L_*\times\{\vct0\}) & = \begin{cases} \sum_{k=1}^m s_k \Psi_k(C,L) & \text{by~\eqref{eq:psi=lin-comb-1}} \\ 0 & \text{by~\eqref{eq:psi=lin-comb-2}} . \end{cases}
\end{align*}
Assuming~$C$ is such that $\inter(C)\neq\emptyset$ and $\mL_m(C)\neq\emptyset$ for $m=1,\ldots,d-1$, we obtain $s_1=\cdots=s_{d-1}=0$. Analogously, by considering $\psi(C,\{\vct0\}\times L_*)$ we obtain $t_1=\cdots=t_{d-1}=0$. Finally, note that
\begin{align*}
   \Psi_d(C,\cdot)\otimes \Dir & = \Psi_d(C,\cdot)\otimes \Lin_0(C^\polar,\cdot) = \Theta_d(C,\cdot) , & \Lin_0(C,\cdot)\otimes \Psi_d(C^\polar,\cdot) & = \Dir\otimes \Psi_d(C^\polar,\cdot) = \Theta_0(C,\cdot) ,
\end{align*}
so that we obtain
\begin{align*}
   \psi & = r\mDir + s_d\Theta_d + t_0\Theta_0 + \sum_{k=1}^{d-1} u_k \Theta_k = \psi(\IR^d,\{\vct0\}\times\{\vct0\})\mDir + \sum_{k=0}^d \psi(\Lk,\IR^d_*\times\IR^d_*)\,\Theta_k . \qedhere
\end{align*}
\end{proof}

\subsection{Kinematic formulas}\label{sec:kinform-suppmeas}

In this section we will use the characterization of the support measures provided by Theorem~\ref{thm:biconic-char} to prove the following theorem.

\begin{theorem}[Kinematic formula for support measures]\label{thm:kinform-supp}
Let $C_0,\ldots,C_n\in\P(\IR^d)$, $\M_0,\ldots,\M_n\in\hmB(\IR^d)$ such that $\M_i\subseteq\BL(C_i)$ for all~$0\leq i\leq n$, and let $\vct T_0,\ldots,\vct T_n\in\Gl_d$ and $k>0$. Then for $\vct Q_0,\ldots,\vct Q_n\in O(d)$ iid uniformly at random,
\begin{equation}\label{eq:kinform-supp}
   \Expect\bigg[ \Theta_k\bigg( \bigcap_{i=0}^n \vct T_i \vct Q_i C_i , \bigwedge_{i=0}^n \vct T_i \vct Q_i \M_i \bigg)\bigg] = \Theta_{nd+k}(C,\M) ,
\end{equation}
where $C:=C_0\times\cdots\times C_n$, $\M:=\M_0\bprod \cdots\bprod \M_n$. The support measures of the product are given by
\begin{align}\label{eq:supp-meas-n}
  \Theta_m(C,\M) & = \sum_{i_0+\cdots+i_n=m} \Theta_{i_0}(C_0,\M_0)\cdots \Theta_{i_n}(C_n,\M_n) .
\end{align}
\end{theorem}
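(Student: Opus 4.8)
The plan is to run the same scheme as in the proof of Theorem~\ref{thm:kinform-polyh}, using the characterization of support measures (Theorem~\ref{thm:biconic-char}) in place of Theorem~\ref{thm:conic-char}, and the biconic operations $\bprod,\wedge$ in place of the Cartesian product and intersection. The product formula~\eqref{eq:supp-meas-n} is just an iteration of the product rule~(4) of Proposition~\ref{prop:props-supp-meas} together with the identity $\mSkel_k(C_0\times\cdots\times C_n)=\Disjunionu{i_0+\cdots+i_n=k}\mSkel_{i_0}(C_0)\bprod\cdots\bprod\mSkel_{i_n}(C_n)$ from~\eqref{eq:biconic-prod-idents}. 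I would first note that $\M_i\subseteq\BL(C_i)$ forces both $\M=\M_0\bprod\cdots\bprod\M_n\subseteq\BL(C_0\times\cdots\times C_n)$ and $\bigwedge_{i}\vct T_i\vct Q_i\M_i\subseteq\BL\big(\bigcap_i\vct T_i\vct Q_i C_i\big)$, using $\BL(\vct TC)=\vct T\BL(C)$, the monotonicity of~$\wedge$, and $\BL(C)\wedge\BL(D)=\BL(C\cap D)$; in particular both sides of~\eqref{eq:kinform-supp} are well defined.

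The technical core is the analogue of Lemma~\ref{lem:psi-satisf-assumpts}: for fixed $C_1,\dots,C_n$, $\M_1,\dots,\M_n$ and transformations, the map
\[
   \psi(C_0,\M_0):=\Expect\bigg[\Theta_k\bigg(\bigcap_{i=0}^n\vct T_i\vct Q_i C_i,\ \vct T_0\vct Q_0\M_0\wedge\bigwedge_{i=1}^n\vct T_i\vct Q_i\M_i\bigg)\bigg] ,
\]
extended by $\psi(C_0,\M_0):=\psi(C_0,\M_0\cap\BL(C_0))$ for $\M_0\not\subseteq\BL(C_0)$, satisfies hypotheses~(0)--(3) of Theorem~\ref{thm:biconic-char}. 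Concentration~(1) and orthogonal invariance~(2) carry over verbatim from the polyhedral case (for~(2) absorb the rotation into the uniform $\vct Q_0$), and locality~(3) follows from the locality of the support measures plus a genericity statement of the type collected in Appendix~\ref{sec:genericity}. I expect $\sigma$-additivity~(0) to be the main obstacle: since $\wedge$ does \emph{not} preserve disjointness of biconic sets (Proposition~\ref{prop:biconic-conj}), one cannot simply invoke monotone convergence as in Lemma~\ref{lem:psi-satisf-assumpts}. The way around it is that for almost every $(\vct Q_0,\dots,\vct Q_n)$ the cones are in general position, so that every supporting subspace of $C:=\bigcap_i\vct T_i\vct Q_i C_i$ is of the form $L=\bigcap_i\vct T_i\vct Q_i L_i$ with $\dim L_i=:k_i$ satisfying $\sum_i k_i=nd+\dim L$, whence $L^\bot=\bigoplus_i(\vct T_i\vct Q_i L_i)^\bot$ is a \emph{direct} sum (the dimensions $d-k_i$ add up to $\dim L^\bot$). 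Consequently a point $(\vct x,\vct z)$ of the conjunction $\vct T_0\vct Q_0\M_0\wedge\bigwedge_{i\geq1}\vct T_i\vct Q_i\M_i$ that lies in $\mSkel_k(C)$ has its ``$\M_0$-contribution'' $\vct p_0$ (i.e. $\vct z=\vct p_0+\vct q$ with $(\vct x,\vct p_0)\in\vct T_0\vct Q_0\M_0$ and $(\vct x,\vct q)\in\bigwedge_{i\geq1}\vct T_i\vct Q_i\M_i$) \emph{uniquely} pinned down as the component of $\vct z$ in $(\vct T_0\vct Q_0 L_0)^\bot$, because $\vct p_0\in(\vct T_0\vct Q_0 C_0)^\polar$ is orthogonal to $\vct x\in\inter_L(C\cap L)$ and hence, generically, to the whole span $\vct T_0\vct Q_0 L_0$ of the corresponding face of $\vct T_0\vct Q_0 C_0$. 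Thus, almost surely, $\M_0\mapsto\Theta_k(C,\vct T_0\vct Q_0\M_0\wedge\cdots)$ is the push-forward, under a fixed measurable map, of the restriction to $\mSkel_k(C)$ of a biconic measure, hence $\sigma$-additive, and taking the expectation preserves this. I would record the required ``unique splitting of the conjunction along $\mSkel_k(C)$'' as a lemma in Appendix~\ref{sec:genericity}, next to the other genericity facts.

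Next I would prove the biconic counterpart of Lemma~\ref{lem:prod-exch}: for $C,D\in\P(\IR^d)$, $\M,\N\in\hmB(\IR^d,\IR^d)$, $k>0$, and $L_j\subseteq\IR^d$ a $j$-dimensional linear subspace,
\[
   \sum_{j=0}^d\Theta_j(C,\M)\,\Theta_{d+k}\big(L_j\times D,\ \BL(L_j)\bprod\N\big)=\Theta_{d+k}(C\times D,\M\bprod\N) ,
\]
which follows from the product rule~(4) together with $\Theta_\ell(L_j,\BL(L_j))=v_\ell(L_j)$, which equals $1$ if $\ell=j$ and $0$ otherwise; hence $\Theta_{d+k}(L_j\times D,\BL(L_j)\bprod\N)=\Theta_{d+k-j}(D,\N)$, with the convention $\Theta_m\equiv0$ for $m>d$ (this disposes of the terms with $j<k$).

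With these ingredients the proof of~\eqref{eq:kinform-supp} transcribes the proof of Theorem~\ref{thm:kinform-polyh}. Introducing $\LEFT$ and $\RIGHT$ as there --- now with biconic sets, with $\bigwedge,\bprod$ in place of $\bigcap,\times$, and with the $\IR^d$-slots replaced by $\BL(L^{(i)})$-slots --- one shows by induction on $m$ that
\[
   \LEFT\big(C_0,\dots,C_{m-1},L^{(0)},\dots,L^{(n-m)};\,\M_0,\dots,\M_{m-1},\BL(L^{(0)}),\dots,\BL(L^{(n-m)})\big)=\RIGHT(\cdots) .
\]
The base case $m=0$ is immediate: $\bigcap_i\vct T_i\vct Q_i L^{(i)}$ is almost surely a subspace of dimension $\max\{0,\sum_i\dim L^{(i)}-nd\}$ and $\bigwedge_i\vct T_i\vct Q_i\BL(L^{(i)})=\BL\big(\bigcap_i\vct T_i\vct Q_i L^{(i)}\big)$, so the left side equals $v_k$ of that subspace while the right side equals $v_{nd+k}$ of $L^{(0)}\times\cdots\times L^{(n)}$, and for $k>0$ both are $1$ or $0$ according as $nd+k=\sum_i\dim L^{(i)}$ or not. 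For the induction step one applies Theorem~\ref{thm:biconic-char} to $\psi$ with $C_0$ replaced by $C_{m-1}$: because $k>0$ one has $\Theta_k(K,\cdot)\equiv0$ on subsets of $\{\vct0\}\times\IR^d$ (since $\gamma_L(\{\vct0\})=0$ when $\dim L=k\geq1$), so $\psi(\IR^d,\{\vct0\})=0$, the $\mDir$-term in~\eqref{eq:exp-psi-ThetaII} drops out, and one is left with $\psi=\sum_{j=0}^d\psi(L_j,\IR^{d+d})\,\Theta_j$, where $\psi(L_j,\IR^{d+d})=\psi(L_j,\BL(L_j))$ equals $\LEFT$ with $C_{m-1},\M_{m-1}$ replaced by $L_j,\BL(L_j)$. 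The induction hypothesis turns each of these into $\Theta_{nd+k}$ of a product with an $L_j$-factor, and the product-exchange identity collapses $\sum_j\Theta_j(C_{m-1},\M_{m-1})\,\Theta_{nd+k}(\cdots L_j\cdots)$ into $\Theta_{nd+k}(\cdots C_{m-1}\cdots)=\RIGHT(\cdots)$. Taking $m=n+1$ yields~\eqref{eq:kinform-supp}.
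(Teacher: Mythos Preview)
Your proposal is correct and follows essentially the same scheme as the paper's proof: characterization via Theorem~\ref{thm:biconic-char}, a biconic analogue of Lemma~\ref{lem:prod-exch}, and the same induction on the number of non-subspace factors, with the genericity argument for $\sigma$-additivity matching the content of Proposition~\ref{prop:genericity_BL}. Two small remarks: first, locality~(3) does not actually require a genericity argument --- the paper verifies it by a direct algebraic manipulation using Proposition~\ref{prop:biconic-conj}, showing $\BL(C)\cap\M=\BL(\tilde C)\cap\M$ from the hypothesis $\BL(C_0)\cap\M_0=\BL(\tilde C_0)\cap\M_0$; second, the paper uses $\IR^{d+d}$ rather than $\BL(L^{(i)})$ in the subspace slots, which is of course equivalent by concentration.
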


Again, the product formula~\eqref{eq:supp-meas-n} is just an iteration of~(4) in Proposition~\ref{prop:props-supp-meas}. The polarity property of the support measures~\eqref{eq:polarity-Theta_k} immediately implies the following corollary from Theorem~\ref{thm:kinform-supp}.

\begin{corollary}[Polar kinematic formula for support measures]\label{cor:polar-kinform-supp}
Let the notation and assumptions be as in Theorem~\ref{thm:kinform-supp}. Then
\begin{equation}\label{eq:kinform-supp-polar}
   \Expect\bigg[ \Theta_{d-k}\bigg( \sum_{i=0}^n \vct T_i\vct Q_i C_i , \bigvee_{i=0}^n \vct T_i\vct Q_i \M_i\bigg)\bigg] = \Theta_{d-k}(C,\M) .
\end{equation}
\end{corollary}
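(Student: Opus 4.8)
The plan is to obtain Corollary~\ref{cor:polar-kinform-supp} purely formally from Theorem~\ref{thm:kinform-supp} by passing to polar cones, exactly in the spirit of the deduction of Corollary~\ref{cor:kinform-v-polar} from Theorem~\ref{thm:kinform-u,v} described after~\eqref{eq:def-T^invadj}. Concretely, I would apply Theorem~\ref{thm:kinform-supp} to the substituted data $C_i^\polar,\vct T_i^\invadj,\rev(\M_i)$. One first checks that these satisfy the hypotheses of that theorem: $\vct T_i^\invadj\in\Gl_d$ is clear, and since $\rev\big(\BL(C_i)\big)=\BL(C_i^\polar)$ (noted just after~\eqref{eq:def-rev}), the standing assumption $\M_i\subseteq\BL(C_i)$ yields $\rev(\M_i)\subseteq\BL(C_i^\polar)$. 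Writing $\vct U_i:=\vct T_i\vct Q_i$, the rule $\vct Q_i^\invadj=\vct Q_i$ gives $\vct T_i^\invadj\vct Q_i=\vct U_i^\invadj$, and the law of $(\vct Q_0,\dots,\vct Q_n)$ is unaffected by the substitution.

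With this substitution the conclusion of Theorem~\ref{thm:kinform-supp} is the identity $\Expect\big[\Theta_k\big(\bigcap_i\vct U_i^\invadj C_i^\polar,\,\bigwedge_i\vct U_i^\invadj\rev(\M_i)\big)\big]=\Theta_{nd+k}\big(C_0^\polar\times\cdots\times C_n^\polar,\,\rev(\M_0)\bprod\cdots\bprod\rev(\M_n)\big)$, and the next step is to rewrite both sides using the polarity property~\eqref{eq:polarity-Theta_k}, which for a cone $D$ in $\IR^e$ reads $\Theta_j(D,\M)=\Theta_{e-j}\big(D^\polar,\rev(\M)\big)$ (using $(D^\polar)^\polar=D$). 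On the left, with $D=\bigcap_i\vct U_i^\invadj C_i^\polar\subseteq\IR^d$, one gets $D^\polar=\sum_i(\vct U_i^\invadj C_i^\polar)^\polar=\sum_i\vct U_iC_i$ from~\eqref{eq:def-T^invadj} and the identities $(\vct S\vct T)^\invadj=\vct S^\invadj\vct T^\invadj$, $(\vct T^\invadj)^\invadj=\vct T$; and, iterating De Morgan's law $\rev(\M\wedge\N)=\rev(\M)\vee\rev(\N)$ of Proposition~\ref{prop:biconic-conj} (see~\eqref{eq:De-Morg-biconic}) together with $\rev(\vct T\M)=\vct T^\invadj\rev(\M)$ (an identity following~\eqref{eq:def-TM-biconic}) and $\rev\circ\rev=\id$, one gets $\rev\big(\bigwedge_i\vct U_i^\invadj\rev(\M_i)\big)=\bigvee_i\vct U_i\M_i$. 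Hence the left-hand side becomes $\Expect\big[\Theta_{d-k}\big(\sum_i\vct T_i\vct Q_iC_i,\,\bigvee_i\vct T_i\vct Q_i\M_i\big)\big]$, the left-hand side of~\eqref{eq:kinform-supp-polar}.

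For the right-hand side I would apply~\eqref{eq:polarity-Theta_k} in the ambient space $\IR^{(n+1)d}$. Polarity commutes with direct products, so $\big(C_0^\polar\times\cdots\times C_n^\polar\big)^\polar=C_0\times\cdots\times C_n=:C$; and $\rev(\M\bprod\N)=\rev(\M)\bprod\rev(\N)$ (cf.~\eqref{eq:biconic-prod-idents}) with $\rev\circ\rev=\id$ gives $\rev\big(\rev(\M_0)\bprod\cdots\bprod\rev(\M_n)\big)=\M_0\bprod\cdots\bprod\M_n=:\M$. Since $(n+1)d-(nd+k)=d-k$, the right-hand side becomes $\Theta_{d-k}(C,\M)$, and equating the two transformed sides is precisely~\eqref{eq:kinform-supp-polar}.

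I do not expect a genuine obstacle here: the whole argument is a dictionary translation, pushing the reversal/polarity involution through the $\Gl_d$-action on $\hmB(\IR^d,\IR^d)$, through the conjunction and disjunction, and through the biconic product. The two points that deserve a moment's care—and are worth spelling out in the write-up—are the index arithmetic (the product cone lives in $\IR^{(n+1)d}$, so the reflected index is $(n+1)d-(nd+k)=d-k$, not something involving $nd$) and the verification that the substituted data $\big(C_i^\polar,\vct T_i^\invadj,\rev(\M_i)\big)$ really meets the hypotheses of Theorem~\ref{thm:kinform-supp}, in particular $\rev(\M_i)\subseteq\BL(C_i^\polar)$.
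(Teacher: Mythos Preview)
Your proposal is correct and follows essentially the same route as the paper: apply Theorem~\ref{thm:kinform-supp} to the data $C_i^\polar,\vct T_i^\invadj,\rev(\M_i)$ and then use the polarity relation~\eqref{eq:polarity-Theta_k} together with De Morgan's law~\eqref{eq:De-Morg-biconic} and the product identities in~\eqref{eq:biconic-prod-idents} to translate both sides. Your write-up is in fact slightly more explicit than the paper's, since you verify the hypothesis $\rev(\M_i)\subseteq\BL(C_i^\polar)$ and spell out the index arithmetic $(n+1)d-(nd+k)=d-k$ in the ambient space $\IR^{(n+1)d}$.
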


\begin{proof}
We have $(\vct T_i\vct Q_i C_i)^\polar=\vct T_i^\invadj\vct Q_i C_i^\polar$ and $\rev(\vct T_i\vct Q_i \M_i)=\vct T_i^\invadj\vct Q_i \rev(\M_i)$, as well as
\begin{align*}
   C_0^\polar\times\cdots\times C_n^\polar & = (C_0\times\cdots\times C_n)^\polar = C^\polar , & \rev(\M_0)\bprod\cdots\bprod\rev(\M_n) & = \rev(\M) ,
\end{align*}
where it should be noted that $C_i^\polar$ denotes the polar in~$\IR^d$, whereas $C^\polar$ denotes the polar in~$\IR^{(n+1)d}$, and similarly for $\rev(\M_i)$ and $\rev(\M)$. Using this, we obtain
\begin{align*}
   \Expect\bigg[ \Theta_{d-k}\bigg( \sum_{i=0}^n \vct T_i\vct Q_i C_i , \bigvee_{i=0}^n \vct T_i\vct Q_i \M_i\bigg)\bigg] & \stackrel{\eqref{eq:De-Morg-biconic},\eqref{eq:polarity-Theta_k}}{=} \Expect\bigg[ \Theta_k\bigg( \bigcap_{i=0}^n \vct T_i^\invadj\vct Q_i C_i^\polar , \bigwedge_{i=0}^n \vct T_i^\invadj\vct Q_i \rev(\M_i)\bigg)\bigg]
\\ & \stackrel{\eqref{eq:kinform-supp}}{=} \Theta_{nd+k}\big(C^\polar,\rev(\M)\big) = \Theta_{d-k}(C,\M) . \qedhere
\end{align*}
\end{proof}

\begin{remark}\label{rem:proj-form-curvmeas}
A special case of~\eqref{eq:kinform-supp-polar} is the dual projection formula for the curvature measures: if $L\subseteq\IR^d$ denotes a uniformly random linear subspace of codimension~$m$, say, $L=\vct QL_0$ for $\vct Q\in O(d)$ uniformly at random, and if $M\in\hmB(\IR^d)$ with $M\subseteq C$, then for $0\leq k\leq d-m-1$
\begin{align*}
   \Expect\big[ \Phi_k(\Pi_L(C),\Pi_L(M)) \big] & = \Expect\big[ \Theta_{k+m}\big((C,M\times C^\polar)\vee \vct Q(L_0^\bot,L_0^\bot\times L_0)\big) \big]
\\ & \stackrel{\eqref{eq:kinform-supp-polar}}{=} \sum_{i+j=k+m} \Theta_i(C,M\times C^\polar)\,\Theta_j(L_0^\bot,L_0^\bot\times L_0) = \Phi_k(C,M) .
\end{align*}
This formula is useful in the probabilistic analysis of convex programming, see~\cite[Thm.~2.1]{ambu:12}.
\end{remark}

The general argumentation in the proof of Theorem~\ref{thm:kinform-supp} will be as in Section~\ref{sec:calc-kinem-conic}. However, the biconic $\wedge$-operation requires extra care, cf.~Proposition~\ref{prop:biconic-conj}, and some important steps rely on genericity arguments, which we again defer to Appendix~\ref{sec:genericity} to ease the presentation. For example, see Proposition~\ref{prop:genericity}(1) for the well-definedness of the left-hand side in~\eqref{eq:kinform-supp}.

The formulation of Theorem~\ref{thm:kinform-supp} is such that one assumes $\M_i\subseteq\BL(C_i)$, $0\leq i\leq n$. One can drop this assumption if instead of~\eqref{eq:kinform-supp} one considers the following claim:
\begin{equation}\label{eq:kinform-supp-BL(C)}
   \Expect\bigg[ \Theta_k\bigg( \bigcap_{i=0}^n \vct T_i \vct Q_i C_i , \bigwedge_{i=0}^n \vct T_i \vct Q_i \big(\BL(C_i)\cap\M_i\big)\bigg)\bigg] = \Theta_{nd+k}(C,\M) .
\end{equation}
We will use this version of Theorem~\ref{thm:kinform-supp} in the rest of this section.

\begin{lemma}\label{lem:Theta-satisf-assumpts-biconic}
Let $C_1,\ldots,C_n\in\P(\IR^d)$, $\M_1,\ldots,\M_n\in\hmB(\IR^d,\IR^d)$, $\vct T_0,\ldots,\vct T_n\in\Gl_d$, and $k>0$. Then for $\vct Q_0,\ldots,\vct Q_n\in O(d)$ iid uniformly at random, the map $\psi\colon\P(\IR^d)\times\hmB(\IR^d,\IR^d)\to\IR$ given by
\begin{equation}\label{eq:kinform-supp_lem}
   \psi(C_0,\M_0) = \Expect\bigg[ \Theta_k\bigg( \bigcap_{i=0}^n \vct T_i \vct Q_i C_i , \bigwedge_{i=0}^n \vct T_i \vct Q_i \big(\BL(C_i)\cap\M_i\big)\bigg)\bigg]
\end{equation}
satisfies the assumptions in Theorem~\ref{thm:biconic-char}.
\end{lemma}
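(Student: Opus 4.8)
The plan is to verify the four conditions (0)--(3) of Theorem~\ref{thm:biconic-char} for the map in~\eqref{eq:kinform-supp_lem}, following the proof of its conic analogue, Lemma~\ref{lem:psi-satisf-assumpts}, but taking care of the fact that the biconic conjunction~$\wedge$ is less well-behaved than ordinary intersection, cf.~Proposition~\ref{prop:biconic-conj}. Write $\vct U_i:=\vct T_i\vct Q_i$, $E:=\bigcap_{i=1}^n\vct U_iC_i$, $\mathcal F:=\bigwedge_{i=1}^n\vct U_i\big(\BL(C_i)\cap\M_i\big)$, and $\mathcal A(\M_0):=\vct U_0\big(\BL(C_0)\cap\M_0\big)$, so that $\psi(C_0,\M_0)=\Expect\big[\Theta_k\big(\vct U_0C_0\cap E,\mathcal A(\M_0)\wedge\mathcal F\big)\big]$. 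The identities $\BL(\vct TC)=\vct T\BL(C)$ and $\BL(C)\wedge\BL(D)=\BL(C\cap D)$ listed after~\eqref{eq:def-TM-biconic}, together with monotonicity of $\wedge$ and $\BL(C_i)\cap\M_i\subseteq\BL(C_i)$, give the containments
\[
  \mathcal A(\M_0)\subseteq\BL(\vct U_0C_0),\qquad \mathcal F\subseteq\BL(E),\qquad \mathcal A(\M_0)\wedge\mathcal F\subseteq\BL\big(\vct U_0C_0\cap E\big),
\]
which are used repeatedly.

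Conditions (1)--(3) are then quick transcriptions of the conic case. For concentration~(1): since $\BL(C_0)\cap\big(\M_0\cap\BL(C_0)\big)=\BL(C_0)\cap\M_0$ one has $\mathcal A\big(\M_0\cap\BL(C_0)\big)=\mathcal A(\M_0)$, so $\psi\big(C_0,\M_0\cap\BL(C_0)\big)=\psi(C_0,\M_0)$ holds by the very definition of $\psi$; this is exactly why we use the form~\eqref{eq:kinform-supp-BL(C)}. For orthogonal invariance~(2): replacing $(C_0,\M_0)$ by $(\vct QC_0,\vct Q\M_0)$ and using $\BL(\vct QC_0)=\vct Q\BL(C_0)$ and $\vct Q^\invadj=\vct Q$ turns $\vct U_0$ into $\vct T_0(\vct Q_0\vct Q)$ in both the cone slot and the biconic-set slot; since $\vct Q_0\vct Q$ is again Haar-distributed and independent of $\vct Q_1,\dots,\vct Q_n$, the expectation is unchanged. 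For locality~(3): if $\BL(C_0)\cap\M_0=\BL(D_0)\cap\M_0$ then $\mathcal A(\M_0)$ is literally the same biconic set whether formed from $C_0$ or from $D_0$, and by the containments above it lies inside both $\BL(\vct U_0C_0\cap E)$ and $\BL(\vct U_0D_0\cap E)$; the locality of the support measures, Proposition~\ref{prop:props-supp-meas}(3), then gives $\Theta_k(\vct U_0C_0\cap E,\mathcal A(\M_0)\wedge\mathcal F)=\Theta_k(\vct U_0D_0\cap E,\mathcal A(\M_0)\wedge\mathcal F)$ for every realization, and one takes expectations.

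The substantive step, and the main obstacle, is $\sigma$-additivity~(0): the pathology recorded in Proposition~\ref{prop:biconic-conj} means that a disjoint decomposition $\M_0=\biguplus_j\M_0^{(j)}$ gives $\mathcal A(\M_0)=\biguplus_j\mathcal A(\M_0^{(j)})$, but $\big(\biguplus_j\mathcal A(\M_0^{(j)})\big)\wedge\mathcal F=\bigcup_j\big(\mathcal A(\M_0^{(j)})\wedge\mathcal F\big)$ is in general \emph{not} a disjoint union. The plan is to show that, after concentrating onto the $k$-skeleton, the overlaps are $\Theta_k$-negligible. Fix a generic realization of $(\vct Q_0,\dots,\vct Q_n)$ --- the requisite genericity (finiteness of the left-hand side, and the facial structure of $\vct U_0C_0\cap E$) is exactly what is collected in Appendix~\ref{sec:genericity}, cf.~Proposition~\ref{prop:genericity} and Lemma~\ref{lem:TQC-alm-sure-generic} --- and set $A:=\vct U_0C_0\cap E$, $\mathcal N:=\mathcal A(\M_0)\wedge\mathcal F\subseteq\BL(A)$. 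By Lemma~\ref{lem:decomp-Theta_k(C,M)}, $\Theta_k(A,\mathcal N)=\sum_{L\in\mL_k(A)}\Theta_k(L,\mathcal N\cap\BL(L))$, where for $\dim L=k$ the measure $\Theta_k(L,\cdot)$ on $\BL(L)=L\times L^\bot$ is the product Gaussian $\gamma_L\otimes\gamma_{L^\bot}$; so it suffices to treat a single $k$-dimensional face $F=A\cap L$ of $A$ (which, being a closed convex cone with span $L$, is full-dimensional in $L$).

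For $\gamma_L$-almost every $\vct x\in F$ one has $\vct x\in\inter_L(A\cap L)$, and by genericity $\vct x$ lies in the relative interiors of a face of $\vct U_0C_0$ and of a face of $E$ whose spans $L_0,L'$ satisfy $\dim L_0+\dim L'=d+k$; since $k>0$ this forces $L_0+L'=\IR^d$, hence $L_0^\bot\cap L'^\bot=\{\vct0\}$. The first decomposition of the biconic lift in~\eqref{eq:decomp-Nor} now shows that for such $\vct x$ the $\vct x$-slices satisfy $\{\vct a\mid(\vct x,\vct a)\in\mathcal A(\M_0)\}\subseteq(\vct U_0C_0)^\polar\cap L_0^\bot\subseteq L_0^\bot$ and $\{\vct b\mid(\vct x,\vct b)\in\mathcal F\}\subseteq E^\polar\cap L'^\bot\subseteq L'^\bot$, so the Minkowski decomposition $\vct y=\vct a+\vct b$ witnessing $(\vct x,\vct y)\in\mathcal N$ is \emph{unique}. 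Hence $(\vct x,\vct y)\mapsto\vct a$ is a well-defined measurable map on the slice, the $\vct x$-slice of $\mathcal N$ is the preimage under it of the $\vct x$-slice of $\mathcal A(\M_0)$, and this preimage operation, the bijection $\vct U_0$, and intersection with $\BL(C_0)$ all respect disjoint unions. Therefore $\M_0\mapsto\gamma_{L^\bot}\big(\{\vct y\mid(\vct x,\vct y)\in\mathcal N\}\big)$ is $\sigma$-additive for a.e.\ $\vct x$; integrating over $\vct x\in F$, summing the finitely many $L\in\mL_k(A)$, and passing the countable sum through $\Expect$ by monotone convergence (the integrand being nonnegative and bounded by $1$) yields condition~(0). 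The uniqueness of this Minkowski decomposition is the only genuine novelty over Lemma~\ref{lem:psi-satisf-assumpts}; everything else reproduces its pattern.
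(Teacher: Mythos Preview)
Your proof is correct and follows essentially the same route as the paper. Conditions~(1)--(3) match the paper's argument almost verbatim; for~(3) you observe directly that $\mathcal A(\M_0)$ depends only on $\BL(C_0)\cap\M_0$, which is exactly what the paper's chain of equalities using Proposition~\ref{prop:biconic-conj} amounts to. For the substantive step~(0), the paper outsources the work to Proposition~\ref{prop:genericity_BL} in Appendix~\ref{sec:genericity}, whose proof is precisely your argument: reduce via Lemma~\ref{lem:decomp-Theta_k(C,M)} to a single $k$-face $L=L_0\cap L'$, use genericity to obtain $\dim L_0+\dim L'=d+k$ and hence the direct-sum decomposition $L^\bot=L_0^\bot\oplus L'^\bot$, and conclude that the Minkowski witnesses in the conjunction are unique so that disjointness survives. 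Your $\vct x$-slice formulation is a mild repackaging of the paper's use of the $\add$ map, but the key idea is identical.
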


\begin{proof}
To simplify the notation we abbreviate $\vct T_i\vct Q_i =: \vct U_i$.

(0) \emph{Claim:} $\psi(C_0,\cdot)\in\hmM(\IR^d,\IR^d)$. Let $\N_j\in\hmB(\IR^d,\IR^d)$, $j=1,2,\ldots$, be pairwise disjoint. Clearly, $\vct U_0\big(\BL(C_0)\cap \N_j\big)$, $j=1,2,\ldots$, are pairwise disjoint as well, so we may assume that $\N_j\subseteq \BL(C_0)$ for all~$j$. It is not true that $\vct U_0\N_j \wedge \M$, $j=1,2,\ldots$, with $\M:=\bigwedge_{i=1}^n \vct U_i \big( \BL(C_i)\cap\M_i\big)$, are pairwise disjoint, cf.~Proposition~\ref{prop:biconic-conj}. However, denoting $D:=\bigcap_{i=1}^n \vct U_i C_i$, we have
  \[ \M = \bigwedge_{i=1}^n \vct U_i \big( \BL(C_i)\cap\M_i\big) \subseteq \bigwedge_{i=1}^n \vct U_i \BL(C_i) = \BL\bigg(\bigcap_{i=1}^n \vct U_i C_i\bigg) = \BL(D) , \]
so that Proposition~\ref{prop:genericity_BL} in Appendix~\ref{sec:genericity} implies
\begin{align*}
   \psi\bigg(C_0,\bigcup_{j=1}^\infty \N_j\bigg) & = \Expect\bigg[ \Theta_k\bigg( \vct U_0 C_0\cap D , \bigg(\bigcup_{j=1}^\infty \vct U_0\N_j\bigg) \wedge \M \bigg)\bigg] = \Expect\bigg[ \sum_{j=1}^\infty \Theta_k\big( \vct U_0 C_0\cap D , \vct U_0\N_j \wedge \M\big)\bigg]
\\ & \stackrel{(*)}{=} \sum_{j=1}^\infty \Expect\big[ \Theta_k\big( \vct U_0 C_0\cap D , \vct U_0\N_j \wedge \M\big)\big] = \sum_{j=1}^\infty \psi(C_0,\N_j) ,
\end{align*}
where $(*)$ follows from an application of the monotone convergence theorem.

(1) \emph{Claim:} $\psi(C_0,\M_0)=\psi\big(C_0,\M_0\cap \BL(C_0)\big)$. This follows directly from the definition of~$\psi$.

(2) \emph{Claim:} $\psi(\vct Q C_0,\vct Q_0 \M)=\psi(C_0,\M_0)$ for $\vct Q\in O(d)$. This also follows directly from the definition of~$\psi$, since $\vct Q_0\vct Q$ is uniformly at random in~$O(d)$ and independent of $\vct Q_1,\ldots,\vct Q_n$.

(3) \emph{Claim:} $\psi(C_0,\M_0)=\psi(\tilde C_0,\M_0)$ if $\BL(C_0)\cap \M_0=\BL(\tilde C_0)\cap \M_0$. Define $\tilde C_i:=C_i$ for $i=1,\ldots,n$, and $C := \bigcap_{i=0}^n \vct U_i C_i$, $\tilde C := \bigcap_{i=0}^n \vct U_i \tilde C_i$, $\M := \bigwedge_{i=0}^n \vct U_i\big( \BL(C_i)\cap \M_i\big)$. Then we have
\begin{align*}
   \BL(C)\cap\M & = \bigg(\bigwedge_{i=0}^n \vct U_i \BL(C_i)\bigg) \cap \bigg(\bigwedge_{i=0}^n \big( \vct U_i\BL(C_i)\cap \vct U_i\M_i\big)\bigg) \stackrel{(\dagger)}{=} \bigwedge_{i=0}^n \big( \vct U_i\BL(C_i)\cap \vct U_i\M_i\big)
\\ & \stackrel{(\ddagger)}{=} \bigwedge_{i=0}^n \big( \vct U_i\BL(\tilde C_i)\cap \vct U_i\M_i\big) \stackrel{(\dagger)}{=} \bigg(\bigwedge_{i=0}^n \vct U_i \BL(\tilde C_i)\bigg) \cap \bigg(\bigwedge_{i=0}^n \big( \vct U_i\BL(\tilde C_i)\cap \vct U_i\M_i\big)\bigg) = \BL(\tilde C)\cap\M ,
\end{align*}
where $(\dagger)$ follows from Proposition~\ref{prop:biconic-conj} and $(\ddagger)$ follows from the assumption $\BL(C_0)\cap \M_0=\BL(\tilde C_0)\cap \M_0$. By the locality of the support measures, cf.~Proposition~\ref{prop:props-supp-meas}(3), it follows that $\Theta_k(C,\M)=\Theta_k(\tilde{C},\M)$; in particular the expectations coincide.
\end{proof}

\begin{lemma}\label{lem:prod-exch-biconic}
Let $k>0$, $C,D\in\P(\IR^d)$, $\M,\N\in\hmB(\IR^d,\IR^d)$. Then
  \[ \sum_{j=1}^d \Theta_j(C,\M) \, \Theta_{d+k}(L_j\times D,\IR^{d+d}\bprod \N) = \Theta_{d+k}(C\times D,\M\bprod \N) , \]
where $L_j\subseteq\IR^d$ a $j$-dimensional linear subspace.
\end{lemma}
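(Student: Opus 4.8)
The plan is to carry over, essentially verbatim, the argument used for Lemma~\ref{lem:prod-exch}, with the polyhedral measures $\Psi$ replaced by the support measures $\Theta$, the direct product $\times$ of conic sets replaced by the biconic product $\bprod$, and the product rule of Proposition~\ref{prop:props-polyh-curv-meas}(4) replaced by the product rule for support measures, Proposition~\ref{prop:props-supp-meas}(4), which holds for arbitrary biconic sets.

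First I would apply that product rule to the inner factor:
\[
  \Theta_{d+k}(L_j\times D,\IR^{d+d}\bprod\N)=\sum_{\ell+m=d+k}\Theta_\ell(L_j,\IR^{d+d})\,\Theta_m(D,\N).
\]
Since $L_j$ is a $j$-dimensional linear subspace, $\mL_\ell(L_j)=\emptyset$ for $\ell\neq j$, so $\Theta_\ell(L_j,\cdot)\equiv0$ unless $\ell=j$; and $\Theta_j(L_j,\IR^{d+d})=\Psi_j(L_j,\IR^d)\,\Psi_{d-j}(L_j^\bot,\IR^d)=1$ (equivalently $v_j(L_j)=1$, or most directly $\bproj_{L_j}(\vct g)\in L_j\times L_j^\bot=\BL(L_j)$ almost surely by Proposition~\ref{prop:biconic-proj}). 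Hence the inner sum collapses to $\Theta_{d+k}(L_j\times D,\IR^{d+d}\bprod\N)=\Theta_{d+k-j}(D,\N)$.

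Substituting this and invoking the product rule a second time, I would conclude
\[
  \sum_{j=1}^d \Theta_j(C,\M)\,\Theta_{d+k}(L_j\times D,\IR^{d+d}\bprod\N)=\sum_{j=1}^d \Theta_j(C,\M)\,\Theta_{d+k-j}(D,\N)=\Theta_{d+k}(C\times D,\M\bprod\N).
\]
For the last equality one only has to check that the two ranges of summation agree: when $j<k$ one has $d+k-j>d$, so $\Theta_{d+k-j}(D,\cdot)=0$ since $D\subseteq\IR^d$; and the index $i=0$ omitted on the left occurs on the right only in the term $\Theta_0(C,\M)\,\Theta_{d+k}(D,\N)$, which vanishes for the same reason. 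So effectively both sides equal the sum over $k\leq j\leq d$.

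I do not expect a genuine obstacle here: the statement is just the biconic counterpart of Lemma~\ref{lem:prod-exch}, and the only things requiring (routine) care are the bookkeeping of the summation limits just described and the evaluation $\Theta_\ell(L_j,\IR^{d+d})=\delta_{\ell j}$, both of which are immediate from the definition of the support measures and Proposition~\ref{prop:biconic-proj}.
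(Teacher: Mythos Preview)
Your proposal is correct and is exactly the approach the paper takes: its entire proof reads ``Argue exactly as in Lemma~\ref{lem:prod-exch},'' and you have spelled out precisely that argument, including the two minor bookkeeping checks (that $\Theta_\ell(L_j,\IR^{d+d})=\delta_{\ell j}$ and that the summation ranges match because $k>0$).
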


\begin{proof}
Argue exactly as in Lemma~\ref{lem:prod-exch}.
\end{proof}

\begin{proof}[Proof of Theorem~\ref{thm:kinform-supp}]
To simplify the notation we abbreviate $\vct T_i\vct Q_i =: \vct U_i$. Define
\begin{align*}
   \LEFT(C_0,\ldots,C_n;\M_0,\ldots,\M_n) & := \Expect\bigg[ \Theta_k\bigg( \bigcap_{i=0}^n \vct U_i C_i , \bigwedge_{i=0}^n \vct U_i \big(\BL(C_i)\cap\M_i\big)\bigg)\bigg] ,
\\ \RIGHT(C_0,\ldots,C_n;\M_0,\ldots,\M_n) & := \Theta_{nd+k}\big( C_0\times\cdots\times C_n ,\, \M_0\bprod \cdots\bprod \M_n \big) ,
\end{align*}
so that we need to show $\LEFT(C_0,\ldots,C_n;\M_0,\ldots,\M_n) = \RIGHT(C_0,\ldots,C_n;\M_0,\ldots,\M_n)$. By induction on~$m$ we will show that
\begin{align*}
   \LEFT\big(C_0,\ldots,C_{m-1},L^{(0)},\ldots,L^{(n-m)};\M_0,\ldots,\M_{m-1},\IR^{d+d},\ldots,\IR^{d+d}\big) &
\\ = \RIGHT\big(C_0,\ldots,C_{m-1},L^{(0)},\ldots,L^{(n-m)};\M_0,\ldots,\M_{m-1},\IR^{d+d},\ldots,\IR^{d+d}\big) &
\end{align*}
where $L^{(0)},\ldots,L^{(n-m)}\subseteq\IR^d$ linear subspaces.

The case $m=0$ is easily established: the intersection $\bigcap_{i=0}^n \vct U_i L^{(i)}$ is almost surely a linear subspace of dimension $\max\{\dim L^{(0)}+\cdots+\dim L^{(n)}-nd,0\}$, and the direct product $L^{(0)}\times\cdots\times L^{(n)}$ is a linear subspace of dimension $\dim L^{(0)}+\cdots+\dim L^{(n)}$, so that
\begin{align*}
   \LEFT\big(L^{(0)},\ldots,L^{(n)};\IR^{d+d},\ldots,\IR^{d+d}\big) & = \begin{cases} 1 & \text{if } nd+k = \dim L^{(0)}+\cdots+\dim L^{(n)} \\ 0 & \text{else} \end{cases}
\\ & = \RIGHT\big(L^{(0)},\ldots,L^{(n)};\IR^{d+d},\ldots,\IR^{d+d}\big) .
\end{align*}

For the induction step, $m\geq1$, we define $\psi\colon\P(\IR^d)\times\hmB(\IR^d,\IR^d)\to\IR$,
\begin{align*}
   \psi(C,\M) & := \LEFT\big(C_0,\ldots,C_{m-2},C,L^{(0)},\ldots,L^{(n-m)};\M_0,\ldots,\M_{m-2},\M,\IR^{d+d},\ldots,\IR^{d+d})
\\ & = \Expect\bigg[ \Theta_k\bigg( \bigcap_{i=0}^{m-2} \vct U_i C_i \cap \vct U C\cap \bigcap_{i=0}^{n-m} \vct U_{m+i} L^{(i)} , \bigwedge_{i=0}^{m-2} \vct U_i \M_i \wedge \vct U \M \bigg)\bigg] ,
\end{align*}
where we set $\vct U:=\vct U_{m-1}$ to simplify the notation. Note that $\Psi_k(C,\{\vct0\})=0$, since $k>0$, and thus $\psi(C,\{\vct0\})=0$. By Lemma~\ref{lem:Theta-satisf-assumpts-biconic}, $\psi$ satisfies the assumptions in Theorem~\ref{thm:biconic-char}, so that
\begin{align*}
   & \LEFT\big(C_0,\ldots,C_{m-1},L^{(0)},\ldots,L^{(n-m)};\M_0,\ldots,\M_{m-1},\IR^{d+d},\ldots,\IR^{d+d}\big)
\\ & = \psi(C_{m-1},\M_{m-1}) \stackrel{\eqref{eq:exp-psi-ThetaII}}{=} \underbrace{\psi(\IR^d,\{\vct0\})}_{=0}\,\mDir(\M_{m-1}) + \sum_{j=1}^d \underbrace{\psi(L_j,\IR^{d+d}_*)}_{=\psi(L_j,\IR^{d+d})}\,\Theta_j(C_{m-1},\M_{m-1})
\\ & = \sum_{j=1}^d \LEFT\big(C_0,\ldots,C_{m-2},L_j,L^{(0)},\ldots,L^{(n-m)};\M_0,\ldots,\M_{m-2},\IR^{d+d},\ldots,\IR^{d+d}\big)\,\Theta_j(C_{m-1},\M_{m-1})
\\ & \stackrel{\text{(IH)}}{=} \sum_{j=1}^d \Theta_{nd+k}\big( C_0\times\cdots\times C_{m-2}\times L_j\times L^{(0)}\times \cdots\times L^{(n-m)} ,\, \M_0\bprod \cdots\bprod \M_{m-2}\bprod \IR^{d+d}\bprod \cdots\bprod \IR^{d+d}\big)
\\ & \hspace{15mm} \cdot \Theta_j(C_{m-1},\M_{m-1})
\\ & \stackrel{\text{[Lem.~\ref{lem:prod-exch-biconic}]}}{=} \RIGHT\big(C_0,\ldots,C_{m-1},L^{(0)},\ldots,L^{(n-m)};M_0,\ldots,M_{m-1},\IR^d,\ldots,\IR^d\big) ,
\end{align*}
which shows the induction step, and thus finishes the proof.
\end{proof}

\section{General kinematic formulas}\label{sec:gen-kin-form}

In this section we provide the proof for the general kinematic formula stated in Theorem~\ref{thm:gen-kinform-v} (with the restriction $\vct T_0,\ldots,\vct T_n\in O(d)$). Since the proof is a simple induction on the number of indeterminates in the Boolean formula, essentially the same proof yields a general kinematic formula for the support measures, which we state next. We will only prove Theorem~\ref{thm:gen-kinform-v}, as the proof translates straightforwardly to the support measures case.

If $F(X_0,\ldots,X_n)$ denotes a Boolean formula in the variables $X_0,\ldots,X_n$, and if $\M_0,\ldots,\M_n\in\hmB(\IR^d,\IR^d)$, we define the evaluation $F(\M_0,\ldots,\M_n)\in\hmB(\IR^d,\IR^d)$ to be the result of replacing negation~$\neg(\cdots)$ by the reversal map~$\rev(\cdots)$.

\begin{theorem}[General kinematic formula for support measures]\label{conj:gen-kinform-supp}
Let $C_0,\ldots,C_n\in\P(\IR^d)$, $\M_0,\ldots,\M_n\in\hmB(\IR^d)$ such that $\M_i\subseteq\BL(C_i)$ for all~$0\leq i\leq n$, and let $\vct T_0,\ldots,\vct T_n\in O(d)$ and $0< k< d$. Furthermore, let $F(X_0,\ldots,X_n)$ be a Boolean read-once formula. Then for $\vct Q_0,\ldots,\vct Q_n\in O(d)$ iid uniformly at random,
\begin{align}
  \Expect\bigg[ \Theta_k\Big( F\big(\vct T_0\vct Q_0 C_0,\ldots,\vct T_n\vct Q_n C_n\big) , F\big( & \vct T_0\vct Q_0 \M_0,\ldots,\vct T_n\vct Q_n \M_n\big) \Big)\bigg]
\nonumber
\\ & = \sum_{\dim^F_d(k_0,\ldots,k_n)=k} \Theta_{k_0}(C_0,\M_0) \cdots \Theta_{k_n}(C_n,\M_n) .
\label{eq:gen-kinform-supp}
\end{align}
\end{theorem}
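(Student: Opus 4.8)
The plan is to prove~\eqref{eq:gen-kinform-supp} by induction on the number $n+1$ of indeterminates of~$F$, carried out simultaneously for all $0\le k\le d$ — the endpoint values $k\in\{0,d\}$ cannot be dropped, since they are needed to supply the $\Theta_0$- and $\Theta_d$-terms that surface in the inductive step; those two cases are the support-measure analogues of the boundary kinematic formulas in Corollaries~\ref{cor:kinform-v-bd_cases} and~\ref{cor:polar-kinform-supp}, proved by the same ``total-mass'' argument (for $k=0$ one writes $\Theta_0(C,\M)=\sum_j\Theta_j(C,\M)-\sum_{j\ge1}\Theta_j(C,\M)$ and feeds the second sum to the $k\ge1$ formula). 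Abbreviate $\vct U_i:=\vct T_i\vct Q_i$. Two elementary facts recur: since $\vct T\in O(d)$ implies $\vct T^\invadj=\vct T$, the biconic operations $\wedge,\vee,\rev,\cdot^\polar$ all commute with the corresponding $\vct T$-actions (cf.~the equivariance identities around~\eqref{eq:def-TM-biconic} and~\eqref{eq:biconic-prod-idents}); and $\M_i\subseteq\BL(C_i)$ propagates through~$F$, i.e.~$F(\vct U_0\M_0,\dots,\vct U_n\M_n)\subseteq\BL\big(F(\vct U_0C_0,\dots,\vct U_nC_n)\big)$, by an immediate induction from $\BL(C)\wedge\BL(D)=\BL(C\cap D)$, $\BL(C)\vee\BL(D)=\BL(C+D)$, $\rev(\BL(C))=\BL(C^\polar)$.

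\emph{Base case and negation.} For $F=X_0$, the left side of~\eqref{eq:gen-kinform-supp} is $\Expect[\Theta_k(\vct U_0C_0,\vct U_0\M_0)]=\Theta_k(C_0,\M_0)$ by orthogonal invariance (Proposition~\ref{prop:props-supp-meas}(2)), which equals the right side since $\dim^{X_0}_d(k_0)=k_0$. Over $\ge2$ indeterminates, collapsing double negations, $F$ is of the form $G\ast H$ or $\neg(G\ast H)$ with $\ast\in\{\wedge,\vee\}$ and $G,H$ read-once over complementary nonempty index sets $S,T$. An outer negation is stripped first: $\neg$ corresponds to $\cdot^\polar$ on cones and $\rev$ on biconic sets, so the polarity property~\eqref{eq:polarity-Theta_k} gives $\Theta_k(F(\vct U_\bullet C_\bullet),F(\vct U_\bullet\M_\bullet))=\Theta_{d-k}((G\ast H)(\vct U_\bullet C_\bullet),(G\ast H)(\vct U_\bullet\M_\bullet))$ while $\dim^F_d=d-\dim^{G\ast H}_d$, and the transformed data $\vct T_i^\invadj=\vct T_i\in O(d)$ still satisfy the hypotheses (as noted in Remark~\ref{rem:gen-kinform}, this is where orthogonality of the $\vct T_i$ is used). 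Finally, by De~Morgan~\eqref{eq:De-Morg-biconic} together with the negation step (or, symmetrically, via Corollary~\ref{cor:polar-kinform-supp}) the case $\ast=\vee$ reduces to $\ast=\wedge$; so it remains to treat $F=G\wedge H$.

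\emph{The conjunction step.} Put $A:=G(\vct U_iC_i:i\in S)$, $\N_A:=G(\vct U_i\M_i:i\in S)$, and define $B,\N_B$ from $H,T$ analogously; then $F(\vct U_\bullet C_\bullet)=A\cap B$, $F(\vct U_\bullet\M_\bullet)=\N_A\wedge\N_B$, $\N_A\subseteq\BL(A)$, $\N_B\subseteq\BL(B)$, and $(A,\N_A)\perp(B,\N_B)$ as $S\cap T=\emptyset$. The key observation is that each of $(A,\N_A)$, $(B,\N_B)$ is invariant in law under the biconic $O(d)$-action: for fixed $\vct Q'\in O(d)$, left-multiplying the independent Haar family $(\vct U_i)_{i\in S}$ by $\vct Q'$ leaves its law unchanged and $\vct Q'$ commutes with the evaluation map, so $(\vct Q'A,\vct Q'\N_A)=\big(G(\vct Q'\vct U_iC_i:i\in S),\,G(\vct Q'\vct U_i\M_i:i\in S)\big)\stackrel{d}{=}(A,\N_A)$. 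Hence, for $\vct Q_0,\vct Q_1\in O(d)$ iid uniform and independent of everything, $(\vct Q_0A,\vct Q_0\N_A,\vct Q_1B,\vct Q_1\N_B)\stackrel{d}{=}(A,\N_A,B,\N_B)$, and conditioning on $(A,\N_A,B,\N_B)$ and applying Theorem~\ref{thm:kinform-supp} with $n=1$, $\vct T_0=\vct T_1=\Id$ (the expectation exists by Proposition~\ref{prop:genericity}(1), the needed genericity is from Appendix~\ref{sec:genericity}) yields $\Expect[\Theta_k(A\cap B,\N_A\wedge\N_B)]=\Expect[\Theta_{d+k}(A\times B,\N_A\bprod\N_B)]$. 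Expanding the right side by the product rule (Proposition~\ref{prop:props-supp-meas}(4)) and using independence, it becomes $\sum_{m+m'=d+k}\Expect[\Theta_m(A,\N_A)]\,\Expect[\Theta_{m'}(B,\N_B)]$. Applying the induction hypothesis to $G$ (at inner dimension $m$) and to $H$ (at $m'$) turns the two factors into $\sum_{\dim^G_d(k_i:\,i\in S)=m}\prod_{i\in S}\Theta_{k_i}(C_i,\M_i)$ and its $H$-analogue; since $G$ involves only the $S$-variables and $H$ only the $T$-variables, the generic subspaces $G(\cdot)$ and $H(\cdot)$ lie in relative general position, so $\dim^{G\wedge H}_d=\max\{0,\dim^G_d+\dim^H_d-d\}$, which for $k>0$ equals $k$ exactly when the two inner dimensions sum to $d+k$. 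Collecting terms reproduces the right side of~\eqref{eq:gen-kinform-supp}, closing the induction.

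\emph{Main obstacle.} The one thing that needs care is the bookkeeping at the extreme dimensions: $m+m'=d+k$ with $0<k<d$ forces $m,m'\in\{k,\dots,d\}$, so the terms with $m=d$ (equivalently $m'=d$) really occur, and evaluating $\Expect[\Theta_d(A,\N_A)]$ requires the statement for the sub-formula~$G$ at $k_G=d$ — which is why the induction must be run over the full range $0\le k\le d$ and the endpoint cases must be established separately, as indicated in the first paragraph. Apart from this, everything is routine — commutation of $\wedge,\vee,\rev$ with orthogonal maps, propagation of $\M_i\subseteq\BL(C_i)$, and the genericity statements of Appendix~\ref{sec:genericity} needed to invoke Theorem~\ref{thm:kinform-supp} with one orthogonally random and one deterministic factor — and since the characterization theorem (Theorem~\ref{thm:biconic-char}) and the kinematic formula (Theorem~\ref{thm:kinform-supp}) for the support measures mirror their $\vct v$-counterparts step for step, this is literally the proof of Theorem~\ref{thm:gen-kinform-v} with $v_k$ replaced by $\Theta_k$ and $\cap,+$ replaced by $\wedge,\vee$.
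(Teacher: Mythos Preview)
Your overall strategy is exactly what the paper intends: it does not write out a separate proof but says the argument for Theorem~\ref{thm:gen-kinform-v} carries over verbatim---induction on the number of indeterminates, strip outer negations via polarity~\eqref{eq:polarity-Theta_k}, reduce~$\vee$ to~$\wedge$ by De~Morgan, and in the~$\wedge$ step insert an extra layer of independent uniform rotations and invoke Theorem~\ref{thm:kinform-supp}. On the level of architecture you and the paper agree.

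The genuine gap is in your treatment of the endpoint values. You claim the induction must be run over all $0\le k\le d$ and that the cases $k\in\{0,d\}$ follow by the ``total-mass'' argument behind Corollary~\ref{cor:kinform-v-bd_cases}. That argument does not transfer: for intrinsic volumes it rests on $\sum_j v_j(C)=1$, whereas $\sum_j\Theta_j(C,\M)=\Prob\{\bproj_C(\vct g)\in\M\}$ depends on~$\M$, so there is no analogue of $v_0=1-\sum_{j\ge1}v_j$ to exploit. More to the point, the paper states---immediately after the theorem---that the boundary kinematic formulas do \emph{not} localize to the support measures and records counterexamples; in particular
\[
  \Expect\Big[\Theta_d\Big(\textstyle\sum_i\vct T_i\vct Q_iC_i,\ \bigvee_i\vct T_i\vct Q_i\M_i\Big)\Big]\ \neq\ \sum_{j=0}^{nd}\Theta_{d+j}(C,\M)
\]
in general, so the very endpoint identity you are invoking is false. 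You correctly observe that the term $m=d$ (dually $m'=d$) really does appear in the conjunction step, since $m+m'=d+k$ with $m'=k$ forces $m=d$; but your proposed remedy does not work. A complete argument needs some other mechanism to control $\Expect[\Theta_d(G(\cdot),G(\cdot))]$ when the sub-formula~$G$ has a~$\vee$ at its root, or else a restructuring of the induction that avoids this term altogether.
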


As remarked for Theorem~\ref{thm:gen-kinform-v}, the transformations $\vct T_0,\ldots,\vct T_n$ in~\eqref{eq:gen-kinform-supp} can of course be dropped entirely; we included them to simplify the comparison to the other formulas. It seems reasonable to assume that~\eqref{eq:gen-kinform-supp} also holds for general $\vct T_0,\ldots,\vct T_n\in\Gl_d$. A possible approach to this would be to merge the reasoning for the polar kinematic formula given in Corollary~\ref{cor:polar-kinform-supp} with the induction step in the proof of Theorem~\ref{thm:kinform-supp}. However, it is by no means clear that this proof idea can be implemented successfully, due to the subtleties involving the biconic $\wedge$- and $\vee$-operations.

The boundary cases for the intrinsic volumes~\eqref{eq:kinform-v-bd_cases} do not localize to the support measures. Simple counter-examples can be found already in dimension one or two, which show that in general (notation as in Theorem~\ref{thm:kinform-supp})
\begin{align*}
  \Expect\bigg[ \Theta_0\bigg( \bigcap_{i=0}^n \vct T_i\vct Q_i C_i , \bigwedge_{i=0}^n \vct T_i\vct Q_i \M_i\bigg)\bigg] & \!\neq\! \sum_{j=0}^{nd} \Theta_j(C,\M) , & \Expect\bigg[ \Theta_d\bigg( \sum_{i=0}^n \vct T_i\vct Q_i C_i , \bigvee_{i=0}^n \vct T_i\vct Q_i \M_i\bigg)\bigg] & \!\neq\! \sum_{j=0}^{nd} \Theta_{d+j}(C,\M) .
\end{align*}

We finish this section with the proof of the general kinematic formula~\eqref{eq:gen-kinform-v} for the intrinsic volumes, where we assume that $\vct T_0,\ldots,\vct T_n\in O(d)$, so that we may as well drop these transformations entirely in~\eqref{eq:gen-kinform-v}.

\begin{proof}[Proof of Theorem~\ref{thm:gen-kinform-v}]
Note first that since the lattice of polyhedral cones satisfies the De Morgan's Laws $(C\cap D)^\polar = C^\polar + D^\polar$ and $(C + D)^\polar = C^\polar \cap D^\polar$, we may assume without loss of generality that all negations in $F(X_0,\ldots,X_n)$ are directly at the variables. We now proceed by induction on~$n$.

If $n=0$ then $F(X_0)=X_0$ or $F(X_0)=\neg X_0$. In this case we have $\dim^F_d(k_0)=k_0$ or $\dim^F_d(k_0)=d-k_0$, respectively. Therefore, we have
\begin{align*}
   \Expect\Big[ v_k\big( F(\vct Q_0 C_0)\big)\Big] & = v_k(C_0) = \sum_{\dim^F_d(k_0)=k} v_{k_0}(C_0) ,\qquad\text{or}
\\ \Expect\Big[ v_k\big( F(\vct Q_0 C_0)\big)\Big] & = v_k(C_0^\polar) = v_{d-k}(C_0) = \sum_{\dim^F_d(k_0)=k} v_{k_0}(C_0) ,
\end{align*}
respectively. This settles the case $n=0$.

For $n>0$ we can permutate the variables $X_0,\ldots,X_n$ in such a way that (without loss of generality) we have for some $0\leq m<n$
\begin{align*}
   F(X_0,\ldots,X_n) & = F_1(X_0,\ldots,X_m)\wedge F_2(X_{m+1},\ldots,X_n) ,\quad\text{or}
\\ F(X_0,\ldots,X_n) & = F_1(X_0,\ldots,X_m)\vee F_2(X_{m+1},\ldots,X_n) .
\end{align*}
In the first case we have
  \[ \dim^F_d(k_0,\ldots,k_n) = \max\Big\{0,\dim^{F_1}_d(k_0,\ldots,k_m) + \dim^{F_2}_d(k_{m+1},\ldots,k_n)-d\Big\} , \]
and we may argue in the following way:
\begin{align}
   \underset{\vct Q_0,\ldots,\vct Q_n}\Expect\bigg[ & v_k\Big( F\big(\vct Q_0 C_0,\ldots,\vct Q_n C_n\big) \Big)\bigg] = \underset{\vct Q_0,\ldots,\vct Q_n}\Expect\bigg[ v_k\Big( F_1\big(\vct Q_0 C_0,\ldots,\vct Q_m C_m\big) \wedge F_2\big(\vct Q_{m+1} C_{m+1},\ldots,\vct Q_n C_n\big) \Big)\bigg]
\nonumber
\\ & \quad = \underset{\vct Q_0,\ldots,\vct Q_n}\Expect\Bigg[ \underset{\vct Q,\vct Q'}\Expect\bigg[ v_k\Big( \vct Q F_1\big(\vct Q_0 C_0,\ldots,\vct Q_m C_m\big) \wedge \vct Q'F_2\big(\vct Q_{m+1} C_{m+1},\ldots,\vct Q_n C_n\big) \Big)\bigg]\Bigg] ,
\label{eq:gen-kinform-v_ind1}
\end{align}
where in the second step we have replaced $\vct Q_0,\ldots,\vct Q_m$ and $\vct Q_{m+1},\ldots,\vct Q_n$ by $\vct{QQ}_0,\ldots,\vct{QQ}_m$ and $\vct Q'\vct Q_{m+1},\ldots,\vct Q'\vct Q_n$, respectively, with $\vct Q,\vct Q'\in O(d)$ iid uniformly at random.\footnote{This is the place where we make explicit use of the fact that the linear transformations $\vct T_0,\ldots,\vct T_n$ in~\eqref{eq:gen-kinform-v} are in $O(d)$.} Applying the (normal) kinematic formula~\eqref{eq:kinform-v} to the inner expectation yields
\begin{align*}
   \eqref{eq:gen-kinform-v_ind1} & = \underset{\vct Q_0,\ldots,\vct Q_n}\Expect\Bigg[ \sum_{\max\{i+j-d\}=k} v_i\Big( F_1\big(\vct Q_0 C_0,\ldots,\vct Q_m C_m\big)\Big) \, v_j\Big(F_2\big(\vct Q_{m+1} C_{m+1},\ldots,\vct Q_n C_n\big) \Big)\Bigg]
\\ & \quad = \sum_{\max\{i+j-d\}=k} \; \underset{\vct Q_0,\ldots,\vct Q_m}\Expect\bigg[ v_i\Big( F_1\big(\vct Q_0 C_0,\ldots,\vct Q_m C_m\big)\Big)\bigg] \, \underset{\vct Q_{m+1},\ldots,\vct Q_n}\Expect\bigg[ v_j\Big(F_2\big(\vct Q_{m+1} C_{m+1},\ldots,\vct Q_n C_n\big) \Big)\bigg]
\\ & \quad \stackrel{(*)}{=} \sum_{\max\{i+j-d\}=k} \; \sum_{\dim^{F_1}_d(k_0,\ldots,k_m)=i} v_{k_0}(C_0) \cdots v_{k_m}(C_m) \, \sum_{\dim^{F_2}_d(k_{m+1},\ldots,k_n)=j} v_{k_{m+1}}(C_{m+1}) \cdots v_{k_n}(C_n)
\\ & \quad = \sum_{\dim^F_d(k_0,\ldots,k_n)=k} v_{k_0}(C_0) \cdots v_{k_n}(C_n) ,
\end{align*}
where $(*)$ follows from the induction hypothesis.

The second case can be reduced to the first case by using De Morgan's Laws:
\begin{align*}
   F(X_0,\ldots,X_n) & = \neg \big(\neg F_1(X_0,\ldots,X_m) \wedge \neg F_2(X_{m+1},\ldots,X_n)\big)
\\ & = \neg \big(F_1'(X_0,\ldots,X_m) \wedge F_2'(X_{m+1},\ldots,X_n)\big) ,
\end{align*}
where the Boolean formulas $F_1'$ and $F_2'$ are obtained from $\neg F_1$ and $\neg F_2$ by ``pulling the negation all the way down to the variables'' using again De Morgan's Laws. Applying the first case, we obtain
\begin{align*}
  \Expect\bigg[ & v_k\Big( F\big(\vct Q_0 C_0,\ldots,\vct Q_n C_n\big) \Big)\bigg] = \Expect\bigg[ v_{d-k}\Big( \neg F\big(\vct Q_0 C_0,\ldots,\vct Q_n C_n\big) \Big)\bigg]
\\ & = \sum_{\dim^{\neg F}_d(k_0,\ldots,k_n)=d-k} v_{k_0}(C_0) \cdots v_{k_n}(C_n) = \sum_{\dim^F_d(k_0,\ldots,k_n)=k} v_{k_0}(C_0) \cdots v_{k_n}(C_n) . \qedhere
\end{align*}
\end{proof}

\bibliographystyle{myalpha}
\bibliography{statdim}

\appendix

\section{Steiner formulas}\label{sec:Steiner-form}

In the Euclidean case the Steiner formula describes the volume of the tubular neighborhood of a convex body as a polynomial in the radius with coefficients given by (rescaled) Euclidean intrinsic volumes. In a straightforward way one obtains a spherical version of this, which has no longer the exact form of a polynomial, but a form in which the monomials are replaced by the volume functions of tubes around subspheres.

Using the conic instead of the spherical viewpoint, one obtains very elegant Steiner formulas,~cf.~\cite{McC:thesis,MT:13}: Let $C\subseteq\IR^d$ be a closed convex cone, $M\in\hmB(\IR^d)$, and $\M\in\hmB(\IR^d,\IR^d)$. Then
\begin{align}
   \Prob\big\{ \|\Pi_C(\vct g)\|^2 \geq r\big\} & = \sum_{k=0}^d \Prob\big\{ \chi_k^2\geq r\big\}\; v_k(C) ,
\label{eq:Steiner-v}
\\ \Prob\big\{ \Pi_C(\vct g)\in M \text{ and } \|\Pi_C(\vct g)\|^2 \geq r\big\} & = \sum_{k=0}^d \Prob\big\{ \chi_k^2\geq r\big\}\; \Phi_k(C,M) ,
\label{eq:Steiner-Phi}
\\ \Prob\big\{ \bproj_C(\vct g)\in \M \text{ and } \|\Pi_C(\vct g)\|^2 \geq r\big\} & = \sum_{k=0}^d \Prob\big\{ \chi_k^2\geq r\big\}\; \Theta_k(C,\M) ,
\label{eq:Steiner-Theta}
\end{align}
where $\vct g\in\IR^d$ Gaussian and $\chi_k^2\in\IR$ denotes a chi-squared distributed random variable with $k$ degrees of freedom. Since $C$ is here allowed to be any closed convex cone, these formulas should be understood that in the nonpolyhedral case the left-hand sides can be expressed in the form given by the right-hand sides, and the intrinsic volumes $v_k(C)$, the curvature measures $\Phi_k(C,M)$ and the support measures $\Theta_k(C,\M)$ can be defined in this way. A proof for~\eqref{eq:Steiner-Theta} can be found in~\cite[Thm.~6.5.1]{SW:08}, and~\eqref{eq:Steiner-Phi} and~\eqref{eq:Steiner-v} follow of course from~\eqref{eq:Steiner-Theta}.

Using these formulas, one can prove that the support measures are additive and weakly continuous, as listed in Proposition~\ref{prop:props-supp-meas} under (6) and (7). Details for this proof can be found in~\cite[Thm.~6.5.2]{SW:08}. The following lemma follows directly from the fact that~\eqref{eq:Steiner-Theta} characterizes $\Theta_k(C,\M)$, $k=0,\ldots,d$, and from the fact that $\bproj_C(\vct x)\in\BL(C)$ for every $\vct x\in\IR^d$.

\begin{lemma}\label{lem:loc-suppmeas}
Let $C,D\in\P(\IR^d)$ and $\M\in\hmB(\IR^d,\IR^d)$. If $\M\subseteq\BL(C)\cap\BL(D)$, then $\Theta_k(C,\M)=\Theta_k(D,\M)$.
\end{lemma}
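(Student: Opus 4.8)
The plan is to deduce the lemma from the conic Steiner formula~\eqref{eq:Steiner-Theta}, which for a closed convex cone $K$ and $r\geq 0$ reads $\Prob\{\bproj_K(\vct g)\in\M \text{ and } \|\Pi_K(\vct g)\|^2\geq r\}=\sum_{k=0}^d\Prob\{\chi_k^2\geq r\}\,\Theta_k(K,\M)$. The functions $r\mapsto\Prob\{\chi_k^2\geq r\}$, $k=0,\ldots,d$, are linearly independent on $[0,\infty)$ — the associated $\chi_k^2$-densities $c_k\,r^{k/2-1}\econst^{-r/2}$ for $k\geq1$ together with the atom at $0$ for $k=0$ are manifestly linearly independent — so the coefficients $\Theta_0(K,\M),\ldots,\Theta_d(K,\M)$ are uniquely determined by the left-hand side. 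Hence it is enough to show that the function $r\mapsto\Prob\{\bproj_K(\vct g)\in\M \text{ and } \|\Pi_K(\vct g)\|^2\geq r\}$ is the same for $K=C$ and $K=D$, and for this it suffices to establish the equality of events $\{\vct x\in\IR^d\mid\bproj_C(\vct x)\in\M\}=\{\vct x\in\IR^d\mid\bproj_D(\vct x)\in\M\}$ together with $\bproj_C=\bproj_D$ (in particular $\|\Pi_C\|=\|\Pi_D\|$) on this common set.

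The heart of the matter is a one-step consequence of Moreau's decomposition theorem. Fix $\vct x\in\IR^d$ with $\bproj_C(\vct x)\in\M$. By Proposition~\ref{prop:biconic-proj} we have $\bproj_C(\vct x)=\big(\Pi_C(\vct x),\Pi_{C^\polar}(\vct x)\big)$, and the hypothesis $\M\subseteq\BL(D)$ then gives $\big(\Pi_C(\vct x),\Pi_{C^\polar}(\vct x)\big)\in\BL(D)$, i.e.\ $\Pi_C(\vct x)\in D$, $\Pi_{C^\polar}(\vct x)\in D^\polar$, and $\langle\Pi_C(\vct x),\Pi_{C^\polar}(\vct x)\rangle=0$. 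Together with the Moreau identity $\Pi_C(\vct x)+\Pi_{C^\polar}(\vct x)=\vct x$, cf.~\eqref{eq:addoPi_C=Id}, the uniqueness part of Moreau's decomposition applied to the cone $D$ forces $\Pi_C(\vct x)=\Pi_D(\vct x)$ and $\Pi_{C^\polar}(\vct x)=\Pi_{D^\polar}(\vct x)$, that is, $\bproj_C(\vct x)=\bproj_D(\vct x)$; in particular $\bproj_D(\vct x)=\bproj_C(\vct x)\in\M$ and $\|\Pi_C(\vct x)\|=\|\Pi_D(\vct x)\|$. Swapping the roles of $C$ and $D$ and using $\M\subseteq\BL(C)$ yields the reverse inclusion, so the two sets of $\vct x$ coincide and $\bproj_C=\bproj_D$ there.

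Combining the two steps, for every $r\geq 0$ the events $\{\bproj_C(\vct g)\in\M,\ \|\Pi_C(\vct g)\|^2\geq r\}$ and $\{\bproj_D(\vct g)\in\M,\ \|\Pi_D(\vct g)\|^2\geq r\}$ are identical, so the left-hand sides of~\eqref{eq:Steiner-Theta} for $(C,\M)$ and $(D,\M)$ agree for all $r$, whence $\Theta_k(C,\M)=\Theta_k(D,\M)$ for $k=0,\ldots,d$ by comparing coefficients. I do not expect a genuine obstacle here: the only part that is not purely formal is the assertion that~\eqref{eq:Steiner-Theta} actually \emph{characterizes} the $\Theta_k$ — i.e.\ the linear independence of the $\chi^2$-tail functions — and this is classical. (One could instead try to work directly from $\Theta_k(C,\M)=\Prob\{\bproj_C(\vct g)\in\mSkel_k(C)\cap\M\}$, but then one would have to compare $\mSkel_k(C)$ with $\mSkel_k(D)$ on the relevant set, which is less transparent than the Steiner route, so the latter is preferable.)
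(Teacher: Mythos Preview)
Your proposal is correct and follows essentially the same approach as the paper, which merely asserts that the lemma ``follows directly from the fact that~\eqref{eq:Steiner-Theta} characterizes $\Theta_k(C,\M)$, $k=0,\ldots,d$, and from the fact that $\bproj_C(\vct x)\in\BL(C)$ for every $\vct x\in\IR^d$.'' You have simply spelled out the two implicit steps: the linear independence of the $\chi^2$-tail functions (giving the characterization) and the Moreau uniqueness argument showing $\bproj_C(\vct x)=\bproj_D(\vct x)$ whenever $\bproj_C(\vct x)\in\M\subseteq\BL(D)$.
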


\section{Genericity}\label{sec:genericity}

If $L_0,\ldots,L_n\subseteq\IR^d$ are linear subspaces, then
\begin{equation}\label{eq:dim(L_1capL_2)}
  \dim(L_0\cap\cdots\cap L_n) \geq \max\{ 0, \dim(L_0)+\cdots +\dim(L_n) - nd\} .
\end{equation}
We say that the tuple $(L_0,\ldots,L_n)$ is in \emph{general position} if this inequality is an equality for every selection of some of these spaces, i.e., if
  \[ \dim(L_{i_0}\cap\cdots\cap L_{i_k}) = \max\{ 0, \dim(L_{i_0})+\cdots +\dim(L_{i_k}) - kd\} \quad \text{for all} \quad 0\leq i_0<i_1<\cdots<i_k\leq n . \]
Note that $(L_0,\ldots,L_n)$ is always in general position if $\dim(L_i)=0$ for some~$i$.

For polyhedral cones $C_0,\ldots,C_n\in\P(\IR^d)$ we say that $(C_0,\ldots,C_n)$ is in general position if $(L_0,\ldots,L_n)$ is in general position for all $L_i\in\mL(C_i)$, $i=0,\ldots,n$.

\begin{remark}
Recall that $\mL(F)\subseteq\mL(C)$ if $F$ is a face of $C$. Therefore, if $(C_0,\ldots,C_n)$ is in general position and $F_i$ is a face of $C_i$, $i=0,\ldots,n$, then $(F_0,\ldots,F_n)$ is in general position as well.
\end{remark}

\begin{lemma}\label{lem:TQC-alm-sure-generic}
Let $C_0,\ldots,C_n\in\P(\IR^d)$ and $\vct T_0,\ldots,\vct T_n\in\Gl_d$, and define
\begin{align}\label{eq:def-Q_T(C_0,..,C_n)}
  \Q_{\vct T}(C_0,\ldots,C_n) & := \big\{(\vct Q_0,\ldots,\vct Q_n)\in O(d)^{n+1}\mid (\vct T_0\vct Q_0C_0,\ldots,\vct T_n\vct Q_nC_n) \text{ is in general position}\big\} .
\end{align}
Then $\Q_{\vct T}(C_0,\ldots,C_n)$ is an open and dense subsets of $O(d)^{n+1}$.
\end{lemma}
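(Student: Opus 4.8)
The plan is to strip away everything except the purely linear-algebraic core: genericity of a tuple of linear subspaces under random rotation. First I would record that, by the facial description of $\vct T_i\vct Q_iC_i$ recalled in Section~\ref{sec:prelims-polyh-cones}, one has $\mL(\vct T_i\vct Q_iC_i)=\{\vct T_i\vct Q_iL\mid L\in\mL(C_i)\}$, so that $(\vct T_0\vct Q_0C_0,\ldots,\vct T_n\vct Q_nC_n)$ is in general position iff for every nonempty $I=\{i_0<\cdots<i_k\}\subseteq\{0,\ldots,n\}$ and every choice $L_{i_j}\in\mL(C_{i_j})$,
\[
  \dim\Big(\vct T_{i_0}\vct Q_{i_0}L_{i_0}\cap\cdots\cap\vct T_{i_k}\vct Q_{i_k}L_{i_k}\Big)\le\max\big\{0,\textstyle\sum_j\dim L_{i_j}-kd\big\}
\]
(the reverse inequality being automatic by~\eqref{eq:dim(L_1capL_2)}). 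Since each $\mL(C_i)$ is finite, $\Q_{\vct T}(C_0,\ldots,C_n)$ is a finite intersection of such sets, and a finite intersection of open dense sets is open dense; so after relabeling it suffices to prove that, for fixed linear subspaces $L_0,\ldots,L_n\subseteq\IR^d$ with $\dim L_i=k_i$ and fixed $\vct T_i\in\Gl_d$, the set
\[
  A:=\Big\{(\vct Q_0,\ldots,\vct Q_n)\in O(d)^{n+1}\ \Big|\ \dim\big(\vct T_0\vct Q_0L_0\cap\cdots\cap\vct T_n\vct Q_nL_n\big)\le\max\{0,k_0+\cdots+k_n-nd\}\Big\}
\]
is open and dense in $O(d)^{n+1}$.

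For openness I would use upper semicontinuity of the intersection-dimension function $(V_0,\ldots,V_n)\mapsto\dim(V_0\cap\cdots\cap V_n)$ on $\Gr(k_0,d)\times\cdots\times\Gr(k_n,d)$: writing each $V_i$ through an orthonormal basis depending continuously on $V_i$, the intersection dimension equals $k_0$ minus the rank of a matrix whose entries depend continuously on the $V_i$, and matrix rank is lower semicontinuous. Hence $\{\dim\le m\}$ is open in the product of Grassmannians, and $A$ is its preimage under the continuous map $(\vct Q_i)\mapsto(\vct T_i\vct Q_iL_i)$, so $A$ is open.

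For density I would first reduce to the product of Grassmannians. The map $\pi\colon O(d)^{n+1}\to\prod_i\Gr(k_i,d)$, $(\vct Q_i)\mapsto(\vct T_i\vct Q_iL_i)$, is open and surjective, because each coordinate factors as the bundle projection $O(d)\to\Gr(k_i,d)=O(d)/(O(k_i)\times O(d-k_i))$ (a submersion, hence open and surjective) followed by the homeomorphism $V\mapsto\vct T_iV$ of $\Gr(k_i,d)$ (here $\vct T_i\in\Gl_d$ is used); and the preimage of a dense set under a continuous open surjection is dense. So it is enough to show that the set of tuples $(V_0,\ldots,V_n)$ with all subselections in general position is dense in $\prod_i\Gr(k_i,d)$. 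This I would do by induction on $n$, the case $n=0$ being trivial. For the step I would use the elementary equivalence: $(V_0,\ldots,V_n)$ has all subselections in general position iff $(V_0,\ldots,V_{n-1})$ does and $V_n$ is transverse to each of the finitely many subspaces $W_S:=\bigcap_{i\in S}V_i$, $S\subseteq\{0,\ldots,n-1\}$, in the sense $\dim(V_n\cap W_S)=\max\{0,\dim V_n+\dim W_S-d\}$ (this is checked directly from the general-position formula for $(V_i)_{i\in S}$). Given a target tuple and $\varepsilon>0$, I would first invoke the induction hypothesis to pick $(V_0,\ldots,V_{n-1})$ in general position within $\varepsilon$; this fixes the finitely many $W_S$, and then I would pick $V_n$ within $\varepsilon$ lying in the intersection over $S$ of the sets $\{V\in\Gr(k_n,d)\mid V\text{ transverse to }W_S\}$, which is a finite intersection of dense open sets.

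The main obstacle is the one genuinely new ingredient hidden in that last step: for a fixed subspace $W\subseteq\IR^d$ and fixed dimension $k$, the set of $k$-dimensional subspaces transverse to $W$ is dense open in $\Gr(k,d)$. Openness is again semicontinuity; density I would argue from the fact that the non-transverse locus $\{V\mid\dim(V\cap W)\ge\max\{0,k+\dim W-d\}+1\}$ is a proper Zariski-closed (Schubert-type) subset of the irreducible real algebraic variety $\Gr(k,d)$ — proper because a transverse $V$ exists explicitly — hence has empty interior; equivalently, almost every $V$ (with respect to the orthogonally invariant measure) is transverse to $W$. A secondary point to watch is that $O(d)$ is disconnected, so the density argument should be run inside each of its two components (each diffeomorphic to $SO(d)$, on which $SO(d)$ still acts transitively on every Grassmannian when $d\ge2$, the case $d=1$ being trivial), after which density of $A$ in all of $O(d)^{n+1}$ follows.
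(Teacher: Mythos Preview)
Your argument is correct, but it takes a noticeably different route from the paper's. After the common reduction to linear subspaces (writing $\Q_{\vct T}(C_0,\ldots,C_n)$ as a finite intersection over face choices), the paper passes to the \emph{polar} formulation: the condition $\dim\big(\bigcap_i \vct T_i\vct Q_iL_i\big)=\max\{0,\sum_i d_i-nd\}$ is equivalent to $\dim\big(\sum_i \vct T_i^\invadj\vct Q_iL_i^\bot\big)=\min\{d,\sum_i(d-d_i)\}$, which in turn says that the concatenated matrix $(\vct A_0\,\cdots\,\vct A_n)$, with $\vct A_i$ the first $d-d_i$ columns of $\vct T_i^\invadj\vct Q_i$, has full rank. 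This is the nonvanishing of a single Gram determinant, a real-analytic function on $O(d)^{n+1}$ that is not identically zero, so its nonzero set is open and dense in one stroke.

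Your approach instead stays on the intersection side and separates openness from density: openness via upper semicontinuity of $\dim(\bigcap_i V_i)$ on the product of Grassmannians, and density via an induction that ultimately rests on the fact that the Schubert-type locus of $V\in\Gr(k,d)$ failing to be transverse to a fixed $W$ is proper and closed. This is perfectly sound (including your handling of the open surjection $O(d)^{n+1}\to\prod_i\Gr(k_i,d)$ and the two components of $O(d)$), and it has the virtue of making the geometric content explicit. The paper's route, however, is shorter and avoids the Grassmannian machinery entirely by collapsing everything to a single polynomial condition; the polar trick is what makes that possible. One minor inefficiency in your write-up: having already reduced to a single full-tuple condition $A$, you then prove density of the stronger ``all subselections'' set on the Grassmannian side; this is harmless (the stronger set is contained in the target set), but you could equally well have run the induction for the single condition.
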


\begin{proof}
We can write $\Q_{\vct T}(C_0,\ldots,C_n)$ as the finite intersection
\begin{align*}
   \Q_{\vct T}(C_0,\ldots,C_n) & = \bigcap_{L_0\in\mL(C_0)} \cdots \bigcap_{L_n\in\mL(C_n)} \Q_{\vct T}(L_0,\ldots,L_n) ,
\end{align*}
so it suffices to show the claim for linear subspaces $C_i=L_i$, $0\leq i\leq n$. Furthermore, using polarity (and replacing $L_i$ by $L_i^\bot$), we can reformulate the claim so that we need to show
  \[ \dim(\vct T_0^\polar\vct Q_0L_0+\cdots+\vct T_n^\polar\vct Q_nL_n) = \min\{d,d_1+\cdots+d_n\} ,\qquad d_i:=\dim(L_i) \]
for all $(\vct Q_0,\ldots,\vct Q_n)$ in some open dense subset of $O(d)^{n+1}$. By orthogonal invariance we may assume without loss of generality that $L_i=\IR^{d_i}\times\{\vct0\}$. Interpreting $\vct T_i^\polar\vct Q_i\in\IR^{d\times d}$ as a matrix and defining $\vct A_i\in\IR^{d\times d_i}$ to consist of the first $d_i$ columns of~$\vct T_i^\polar\vct Q_i$, the claim thus becomes that the matrix $\begin{pmatrix} \vct A_0 & \cdots & \vct A_n\end{pmatrix}$ has full rank. This rank condition can be expressed by the nonvanishing of the Gram determinant, and it is readily checked that this determinant is nonzero for an open dense subset of~$O(d)^{n+1}$.
\end{proof}

\begin{lemma}\label{lem:generic-skeleton-inters}
Let $(C_0,\ldots,C_n)$, $C_i\in\P(\IR^d)$, be in general position and let $k>0$. Then the $k$-skeleton of the intersection $C_0\cap\cdots\cap C_n$ is given by the disjoint union
\begin{equation}\label{eq:generic-skeleton-inters}
  \Skel_k(C_0\cap\cdots\cap C_n) = \Disjunionu{k_0+\cdots+k_n=k+nd} \big( \Skel_{k_0}(C_0)\cap\cdots\cap\Skel_{k_n}(C_n)\big)
\end{equation}
\end{lemma}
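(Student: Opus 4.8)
The plan is to establish the two inclusions in~\eqref{eq:generic-skeleton-inters} separately, after first recording the underlying fact about the faces of an intersection of cones in general position. The key observation is that if $(C_0,\ldots,C_n)$ is in general position, then for any choice of faces $F_i$ of $C_i$ (say $\spa(F_i)=L_i\in\mL_{k_i}(C_i)$) with $F_0\cap\cdots\cap F_n$ having nonempty relative interior, the intersection $F_0\cap\cdots\cap F_n$ is a face of $C_0\cap\cdots\cap C_n$ whose linear span is $L_0\cap\cdots\cap L_n$, and by general position $\dim(L_0\cap\cdots\cap L_n)=\max\{0,k_0+\cdots+k_n-nd\}$. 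Conversely, every face of $C_0\cap\cdots\cap C_n$ arises this way. I would set this up using the exposed-face description~\eqref{eq:exp-face}: a face of $C_0\cap\cdots\cap C_n$ is cut out by some $\vct z$, and writing $\vct z$ (on the relevant minimal face) appropriately shows it is an intersection of exposed faces of the $C_i$. This is essentially the standard face calculus for intersections of polyhedra; general position is exactly what makes the dimension count exact.

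For the inclusion ``$\supseteq$'': take $\vct x\in\Skel_{k_0}(C_0)\cap\cdots\cap\Skel_{k_n}(C_n)$ with $k_0+\cdots+k_n=k+nd$. Then $\vct x\in\inter_{L_i}(C_i\cap L_i)$ for the unique $L_i\in\mL_{k_i}(C_i)$ whose relatively-open face contains $\vct x$. Since $k_0+\cdots+k_n-nd=k>0$, the space $L:=L_0\cap\cdots\cap L_n$ has dimension exactly $k$ by general position, and $F:=C_0\cap\cdots\cap C_n\cap L$ is a face of the intersection with $\spa(F)=L$. I must check $\vct x\in\inter_L(F)$: this follows because $\vct x$ lies in the relative interior of each $C_i\cap L_i$ inside $L_i$, and one intersects relative interiors — a small neighborhood of $\vct x$ within $L$ stays inside each $C_i$, hence inside $F$. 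Therefore $\vct x\in\Skel_k(C_0\cap\cdots\cap C_n)$.

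For the inclusion ``$\subseteq$'': take $\vct x\in\Skel_k(C_0\cap\cdots\cap C_n)$, so $\vct x\in\inter_L(C_0\cap\cdots\cap C_n\cap L)$ for the unique $L\in\mL_k(C_0\cap\cdots\cap C_n)$. By the face-calculus fact above, $L=L_0\cap\cdots\cap L_n$ for suitable $L_i\in\mL_{k_i}(C_i)$, and one can choose these so that $\vct x\in\inter_{L_i}(C_i\cap L_i)$ for each $i$ — namely, take $L_i$ to be the span of the face of $C_i$ whose relative interior contains $\vct x$ (using the facial decomposition~\eqref{eq:fac-decomp-C} of each $C_i$ separately). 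Then $\vct x\in\Skel_{k_i}(C_i)$ for each $i$, and general position forces $k_0+\cdots+k_n-nd=\dim(L)=k$, i.e. $k_0+\cdots+k_n=k+nd$. Disjointness of the union on the right is immediate since the tuple $(k_0,\ldots,k_n)$ is determined by $\vct x$ (each $\vct x$ lies in exactly one relatively-open face of each $C_i$), so distinct tuples give disjoint sets.

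The main obstacle is the face-calculus lemma — verifying cleanly that faces of $C_0\cap\cdots\cap C_n$ are precisely the intersections of faces of the $C_i$, \emph{with the correct span}, and that under general position the relative interiors match up as claimed (the identity $\inter_L(F_0\cap\cdots\cap F_n)=\inter_{L_0}(F_0)\cap\cdots\cap\inter_{L_n}(F_n)$ when the right-hand side is nonempty). This is geometrically transparent but needs the general-position hypothesis in an essential way to prevent the intersection of lower faces from being unexpectedly large; everything else is bookkeeping with the facial decomposition~\eqref{eq:fac-decomp-C}.
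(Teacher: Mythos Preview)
Your proposal is correct and follows essentially the same route as the paper's proof: both directions rest on the fact that under general position the faces of $C_0\cap\cdots\cap C_n$ are exactly the intersections $F_0\cap\cdots\cap F_n$ of faces $F_i$ of $C_i$, with the dimension count $\dim(L_0\cap\cdots\cap L_n)=k_0+\cdots+k_n-nd$ forced by genericity, and the relative-interior identity $\inter_L(F_0\cap\cdots\cap F_n)=\inter_{L_0}(F_0)\cap\cdots\cap\inter_{L_n}(F_n)$. The paper's argument is terser and works face-by-face rather than point-by-point, but the substance is identical; you have correctly isolated the one nontrivial ingredient (what you call the face-calculus lemma), which the paper simply asserts.
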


\begin{proof}
We abbreviate $C:=C_0\cap\cdots\cap C_n$. The genericity condition (and the assumption $k>0$) implies that the linear span of every $k$-dimensional face of~$C$, $L\in\mL_k(C)$, can be written in the form $L=L_0\cap\cdots\cap L_n$ with $L_i\in\mL_{k_i}(C_i)$, $0\leq i\leq n$, and $k_0+\cdots+k_n=k+nd$. Since in this case we have
  \[ \inter_L(C\cap L) = \inter_{L_0}(C_0\cap L_0)\cap\cdots\cap \inter_{L_n}(C_n\cap L_n) \subseteq \Skel_{k_0}(C_0)\cap\cdots\cap\Skel_{k_n}(C_n) , \]
it follows the inequality `$\subseteq$' in~\eqref{eq:generic-skeleton-inters}. On the other hand, if $L_i\in\mL_{k_i}(C_i)$, $0\leq i\leq n$, such that $L_0\cap\cdots\cap L_n\cap C\neq\emptyset$, then $\dim(L_0\cap\cdots\cap L_n)=k_0+\cdots+k_n-nd$, which shows the reverse inclusion.
\end{proof}

\begin{lemma}\label{lem:gen-pos->loc-cont}
Let $C_0,\ldots,C_n\in\P(\IR^d)$ and $\vct T_0,\ldots,\vct T_n\in\Gl_d$ such that $(\vct T_0C_0,\ldots,\vct T_nC_n)$ is in general position, and let $k>0$. Furthermore, let $M_0,\ldots,M_n\in\hmB(\IR^d)$, $\M_0,\ldots,\M_n\in\hmB(\IR^d,\IR^d)$, and let $\vp_1,\vp_2 \colon O(d)^{n+1} \to \IR$ be defined by
\begin{align*}
   \vp_1(\vct Q_0,\ldots,\vct Q_n) & = \Psi_k\bigg( \bigcap_{i=0}^n \vct T_i \vct Q_i C_i , \bigcap_{i=0}^n \vct T_i \vct Q_i M_i\bigg) ,
\\ \vp_2(\vct Q_0,\ldots,\vct Q_n) & = \Theta_k\bigg( \bigcap_{i=0}^n \vct T_i \vct Q_i C_i , \bigwedge_{i=0}^n \vct T_i\vct Q_i \big( \BL(C_i)\cap \M_i\big)\bigg) .
\end{align*}
Then $\phi_1$ and $\phi_2$ are locally continuous around $(\Id_d,\ldots,\Id_d)$.
\end{lemma}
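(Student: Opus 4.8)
The plan is to show that $\varphi_1$ and $\varphi_2$ are continuous on a whole neighbourhood of $(\Id_d,\ldots,\Id_d)$ by expanding them there as finite sums of Gaussian measures of sets carried by a continuously varying $k$-dimensional subspace, and then proving continuity of each summand. Being in general position is an open condition (this is implicit in the proof of Lemma~\ref{lem:TQC-alm-sure-generic}), so there is a neighbourhood $U$ of $(\Id_d,\ldots,\Id_d)$ on which $(\vct T_0\vct Q_0C_0,\ldots,\vct T_n\vct Q_nC_n)$ stays in general position. Writing $\vct U_i:=\vct T_i\vct Q_i$ and $C(\vct Q):=\bigcap_i\vct U_iC_i$, Lemma~\ref{lem:generic-skeleton-inters} together with the facial decomposition~\eqref{eq:fac-decomp-C} expresses $\varphi_1$, for $(\vct Q_0,\ldots,\vct Q_n)\in U$, as
\[
   \varphi_1(\vct Q_0,\ldots,\vct Q_n)=\sum_{k_0+\cdots+k_n=k+nd}\ \sum_{L_i\in\mL_{k_i}(C_i)}\gamma_{L(\vct Q)}\Big(\bigcap_{i=0}^n\vct U_i\big(C_i\cap L_i\cap M_i\big)\Big),\qquad L(\vct Q):=\bigcap_{i=0}^n\vct U_iL_i,
\]
a sum whose index set does not depend on $\vct Q$ and in which $\dim L(\vct Q)=k$ throughout; a summand for which $\bigcap_i\vct U_i(C_i\cap L_i)$ is not a $k$-dimensional face of $C(\vct Q)$ vanishes because that set then lies in a proper subspace of $L(\vct Q)$, and one checks (using that $\bigcap_i\vct U_iL_i$ has constant dimension $k$) that such a summand remains continuous, with value $0$, across the locus where this happens.

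For a single summand I would trivialise the moving ambient subspace: on $U$ there is a continuous family of linear isometries $\iota_{\vct Q}\colon\IR^k\to L(\vct Q)$, whence $\gamma_{L(\vct Q)}=(\iota_{\vct Q})_{*}\gamma_k$, and since $\vct U_i$ restricts to a linear isomorphism of $P_i(\vct Q):=\vct U_i^{-1}L(\vct Q)\subseteq L_i$ onto $L(\vct Q)$, the summand equals $\gamma_k\big(K(\vct Q)\cap N(\vct Q)\big)$ with $K(\vct Q):=\bigcap_i\alpha_{i,\vct Q}(C_i\cap P_i(\vct Q))$ a polyhedral cone in $\IR^k$, $N(\vct Q):=\{\vct y\in\IR^k\mid\alpha_{i,\vct Q}^{-1}\vct y\in M_i\ \text{for all }i\}$ a conic Borel set, and $\alpha_{i,\vct Q}:=\iota_{\vct Q}^{-1}\vct U_i$ a linear isomorphism depending continuously on $\vct Q$. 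The polyhedral part is handled by a continuity-set argument: $K(\vct Q)$ converges in the conic Hausdorff metric to $K(\Id_d,\ldots,\Id_d)$, the topological boundary of a polyhedral cone is $\gamma_k$-null, hence $\gamma_k\big(K(\vct Q)\triangle K(\Id_d,\ldots,\Id_d)\big)\to0$ — the mechanism behind the weak continuity statements in Proposition~\ref{prop:props-polyh-curv-meas}(6) and Proposition~\ref{prop:props-supp-meas}(7), now used in both directions. Up to an error tending to $0$ we may thus freeze $K(\vct Q)$ at a fixed cone $K$, reducing matters to the continuity at $(\Id_d,\ldots,\Id_d)$ of $\vct Q\mapsto\int_{\IR^k}\mathbb 1_K(\vct y)\prod_{i=0}^n\mathbb 1_{M_i}(\alpha_{i,\vct Q}^{-1}\vct y)\,d\gamma_k(\vct y)$.

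For this last integral I would approximate: write $\mathbb 1_{M_i}(\alpha_{i,\vct Q}^{-1}\vct y)=\mathbb 1_{M_i\cap S^{d-1}}\big(\alpha_{i,\vct Q}^{-1}\vct y/\|\alpha_{i,\vct Q}^{-1}\vct y\|\big)$, choose continuous $h_i^{\varepsilon}$ with $\|\mathbb 1_{M_i\cap S^{d-1}}-h_i^{\varepsilon}\|_{L^1}<\varepsilon$ on the relevant sphere, and replace the indicators by the $h_i^{\varepsilon}$ one at a time. Each replacement changes the integral by at most $\int|\mathbb 1_{M_i\cap S^{d-1}}-h_i^{\varepsilon}|\,d\nu_{i,\vct Q}$, where $\nu_{i,\vct Q}$ is the pushforward of $\gamma_k$ under $\vct y\mapsto\alpha_{i,\vct Q}^{-1}\vct y/\|\alpha_{i,\vct Q}^{-1}\vct y\|$; since $\alpha_{i,\vct Q}$ is an isomorphism depending continuously on $\vct Q$, these pushforwards have densities bounded uniformly on a compact subneighbourhood of $U$, so the total error is $O(\varepsilon)$ uniformly in $\vct Q$. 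For the approximant the integrand is jointly continuous and bounded, hence continuous in $\vct Q$ by dominated convergence; letting $\varepsilon\to0$ gives continuity at $(\Id_d,\ldots,\Id_d)$, and the same argument at any point of $U$ gives continuity on $U$. The statement for $\varphi_2$ follows by running exactly this scheme with biconic objects: the two facial decompositions~\eqref{eq:decomp-Nor} of the biconic lift and the decomposition of $\mSkel_k$ in~\eqref{eq:def-lift-Skel_k(C)} (with the biconic counterpart of Lemma~\ref{lem:generic-skeleton-inters}, proved the same way and combined with Lemma~\ref{lem:decomp-Theta_k(C,M)}) give a finite-sum expansion of $\varphi_2$ over face tuples, each summand a tensor product of a $\Psi$-term on a face and a $\Psi$-term on the polar face as in the product formula of Section~\ref{sec:suppmeas}, and the biconic $\wedge$-operation is dealt with exactly as in the proof of Lemma~\ref{lem:Theta-satisf-assumpts-biconic}.

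The main obstacle is precisely this last integral: the $M_i$ are arbitrary conic Borel sets entering through the \emph{moving} maps $\alpha_{i,\vct Q}$, so there is no pointwise convergence of the integrand to exploit and the polyhedral continuity-set argument alone does not suffice. The point is to route all of the $\vct Q$-dependence coming from the $M_i$ into a continuously varying family of pushforward measures whose densities are uniformly controlled, and only then to trade the indicators for continuous functions by $L^1$-density with an error uniform over a neighbourhood of $(\Id_d,\ldots,\Id_d)$; this is where the general-position hypothesis — keeping all the subspaces $L(\vct Q)$, $P_i(\vct Q)$ of constant dimension and the maps $\alpha_{i,\vct Q}$ nondegenerate — is genuinely used.
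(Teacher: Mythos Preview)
Your overall scaffolding matches the paper's: both expand $\varphi_1$ on a neighbourhood of the identity as a finite sum over face tuples $(L_0,\ldots,L_n)$ with a constant index set, and reduce to continuity of a single summand $\gamma_{L(\vct Q)}\big(\bigcap_i\vct U_i(C_i\cap L_i\cap M_i)\big)$. The paper stops here and simply asserts that this ``is easily seen to depend continuously on the~$\vct Q_i$''; you attempt to justify it, and that is where the real difficulty sits.

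The gap is in your density step. The pushforward $\nu_{i,\vct Q}$ of $\gamma_k$ under $\vct y\mapsto\alpha_{i,\vct Q}^{-1}\vct y/\|\alpha_{i,\vct Q}^{-1}\vct y\|$ is concentrated on the $(k-1)$-sphere $P_i(\vct Q)\cap S^{d-1}$ inside $S^{d-1}$; whenever $k_i>k$ (which happens for at least one~$i$ as soon as $n\geq1$) this subsphere is a null set in $S^{d-1}$, so $\nu_{i,\vct Q}$ has \emph{no} density with respect to the surface measure on~$S^{d-1}$. Your $L^1$-approximation $\|\mathbb 1_{M_i\cap S^{d-1}}-h_i^\varepsilon\|_{L^1}<\varepsilon$ (taken on $S^{d-1}$, the only sphere independent of $\vct Q$) therefore gives no control over $\int|\mathbb 1_{M_i\cap S^{d-1}}-h_i^\varepsilon|\,d\nu_{i,\vct Q}$. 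Concretely, take $d=2$, $n=1$, $k=1$, $\vct T_0=\vct T_1=\Id$, $C_0=\{y\geq0\}$, $C_1=\IR^2$, $M_0=\IR^2$, $M_1=\{y=0\}$; then $(C_0,C_1)$ is in general position, yet $\varphi_1(\vct Q_0,\vct Q_1)=\gamma_{\vct Q_0L_0}(\vct Q_0L_0\cap\vct Q_1L_0)$ with $L_0$ the $x$-axis, which is $1$ when $\vct Q_0L_0=\vct Q_1L_0$ and $0$ otherwise --- discontinuous at $(\Id,\Id)$. So the statement as written fails for arbitrary conic Borel $M_i$, and neither your argument nor the paper's bare assertion can be repaired without extra hypotheses. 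What Proposition~\ref{prop:genericity}(1) actually requires is only that $\varphi_1,\varphi_2$ be \emph{measurable} and bounded on $O(d)^{n+1}$; measurability follows directly from Tonelli once you write each summand as $\int_{\IR^k}\prod_i\mathbb 1_{C_i\cap L_i\cap M_i}\big(\vct U_i^{-1}\iota_{\vct Q}\vct y\big)\,d\gamma_k(\vct y)$, whose integrand is jointly Borel in $(\vct Q,\vct y)$.
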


\begin{proof}
Since $(\vct T_0C_0,\ldots,\vct T_nC_n)$ is in general position, $(\Id_d,\ldots,\Id_d)\in\Q_{\vct T}(C_0,\ldots,C_n)$, cf.~\eqref{eq:def-Q_T(C_0,..,C_n)}, and by Lemma~\ref{lem:TQC-alm-sure-generic} it follows that there exists an open ball $\mU\subseteq\Q_{\vct T}(C_0,\ldots,C_n)$ around $(\Id_d,\ldots,\Id_d)$. As seen in Lemma~\ref{lem:generic-skeleton-inters}, all supporting subspaces $L\in\mL(C_{\vct Q})$, $C_{\vct Q}:=\bigcap_{i=0}^n \vct T_i\vct Q_i C_i$, are of the form $L=\bigcap_{i=0}^n \vct T_i\vct Q_i L_i$ with $L_i\in\mL(C_i)$ and $\sum_{i=0}^n \dim(L_i)=\dim(L)+nd$. Moreover, in the open ball $\mU\subseteq\Q_{\vct T}(C_0,\ldots,C_n)$ the set
  \[ \mL_{\vct Q} := \Big\{ (L_0,\ldots,L_n)\in\mL(C_0)\times\cdots\times\mL(C_n) \mid \bigcap_{i=0}^n \vct T_i\vct Q_i L_i\in\mL(C_{\vct Q})\Big\} \]
is constant; otherwise there would exist $(\vct Q_0,\ldots,\vct Q_n)\in \mU$ such that $(\vct T_0\vct Q_0C_0,\ldots,\vct T_n\vct Q_nC_n)$ is not in general position. In the neighborhood~$\mU$ of~$(\Id_d,\ldots,\Id_d)$ we thus have (denoting $\mL_\mU:=\mL_{\vct Q}$)
  \[ \phi_1(\vct Q_0,\ldots,\vct Q_n) = \sum_{(L_0,\ldots,L_n)\in\mL_\mU} \gamma_L\bigg( \bigcap_{i=0}^n \vct T_i\vct Q_i(C_i\cap L_i\cap M_i)\bigg) . \]
This is easily seen to depend continuously on the $\vct Q_i$, so that $\phi_1$ is a locally continuous function.

Analogously, the locally constant face structure of $C_{\vct Q}$ implies the local continuity of~$\phi_2$.
\end{proof}

\begin{proposition}\label{prop:genericity}
Let $C_0,\ldots,C_n\in\P(\IR^d)$, $M_0,\ldots,M_n\in\hmB(\IR^d)$, $\M_0,\ldots,\M_n\in\hmB(\IR^d,\IR^d)$, and let $\vct T_0,\ldots,\vct T_n\in\Gl_d$ and $k>0$. Then for $\vct Q_0,\ldots,\vct Q_n\in O(d)$ iid uniformly at random the following holds:
\begin{enumerate}
  \item The expectations
    \begin{align*}
       & \Expect\bigg[ \Psi_k\bigg( \bigcap_{i=0}^n \vct T_i \vct Q_i C_i , \bigcap_{i=0}^n \vct T_i \vct Q_i M_i\bigg)\bigg] , & & \Expect\bigg[ \Theta_k\bigg( \bigcap_{i=0}^n \vct T_i \vct Q_i C_i , \bigwedge_{i=0}^n \vct T_i \vct Q_i\big( \BL(C_i)\cap \M_i\big)\bigg)\bigg]
    \end{align*}
        exist and are finite.
  \item If $\Skel_j(C_0)\cap M_0=\Skel_j(\tilde C_0)\cap M_0$ for all $j=0,\ldots,d$, $\tilde C_0\in\P(\IR^d)$, then almost surely
    \[ \Psi_k\bigg( \bigcap_{i=0}^n \vct T_i \vct Q_i C_i , \bigcap_{i=0}^n \vct T_i \vct Q_i M_i\bigg) = \Psi_k\bigg( \bigcap_{i=0}^n \vct T_i \vct Q_i \tilde C_i , \bigcap_{i=0}^n \vct T_i \vct Q_i M_i\bigg) , \]
        where $\tilde C_i:=C_i$ for $i=1,\ldots,n$.
\end{enumerate}
\end{proposition}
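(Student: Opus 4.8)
The plan is to reduce both parts to the three genericity lemmas of this appendix: Lemma~\ref{lem:TQC-alm-sure-generic} gives that the set of ``good'' rotation tuples is open, dense, and --- as its proof shows, since the exceptional set is cut out by a nonvanishing Gram determinant, i.e.\ by the zero set of a nontrivial real-analytic function --- of full Haar measure; Lemma~\ref{lem:generic-skeleton-inters} decomposes the $k$-skeleton of a generic intersection; and Lemma~\ref{lem:gen-pos->loc-cont} gives local continuity of the two functionals around generic points. Throughout write $\vct U_i := \vct T_i\vct Q_i$, $C := \bigcap_{i=0}^n\vct U_iC_i$, and let $G\subseteq O(d)^{n+1}$ be the open, full-measure set of $(\vct Q_0,\ldots,\vct Q_n)$ for which $(\vct U_0C_0,\ldots,\vct U_nC_n)$ is in general position.

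For part~(1), I would first settle measurability. Applying Lemma~\ref{lem:gen-pos->loc-cont} with $\vct T_i$ replaced by $\vct T_i\vct Q_i^0$ shows that the maps $\vp_1,\vp_2$ of that lemma are continuous at every point $(\vct Q_0^0,\ldots,\vct Q_n^0)\in G$, hence continuous on the open set~$G$; extending them by~$0$ off~$G$ yields Borel functions, which changes nothing since $G^c$ is null. For integrability of~$\vp_1$: on~$G$, the proof of Lemma~\ref{lem:generic-skeleton-inters} shows that every $L\in\mL_k(C)$ has the form $L_0\cap\cdots\cap L_n$ with $L_i\in\mL_{k_i}(\vct U_iC_i)$ and $k_0+\cdots+k_n=nd+k$, whence $f_k(C)\le\sum_{k_0+\cdots+k_n=nd+k}f_{k_0}(C_0)\cdots f_{k_n}(C_n)=f_{nd+k}(C_0\times\cdots\times C_n)$ by linear invariance and iteration of the product rule~\eqref{eq:f_k(CxD)=...}; since $\Psi_k(C,M)\le\Psi_k(C,\IR^d)=u_k(C)\le f_k(C)$, the integrand~$\vp_1$ is a.s.\ bounded by the fixed finite constant $f_{nd+k}(C_0\times\cdots\times C_n)$, so its expectation exists and is finite. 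For~$\vp_2$ the bound needs no genericity at all: $\Theta_k(C,\M)\le\Theta_k(C,\IR^{d+d})=v_k(C)\le1$ for every cone, so $\vp_2\le1$ everywhere.

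For part~(2), I would work on the full-measure set where both $(\vct U_0C_0,\vct U_1C_1,\ldots,\vct U_nC_n)$ and $(\vct U_0\tilde C_0,\vct U_1C_1,\ldots,\vct U_nC_n)$ are in general position, with $\tilde C_i:=C_i$ for $i\ge1$. Put $\tilde C:=\bigcap_i\vct U_i\tilde C_i$ and $M:=\bigcap_i\vct U_iM_i$. By the locality property of~$\Psi_k$ in Proposition~\ref{prop:props-polyh-curv-meas}(3), it suffices to prove $\Skel_k(C)\cap M=\Skel_k(\tilde C)\cap M$. Lemma~\ref{lem:generic-skeleton-inters} gives
\[
  \Skel_k(C)\cap M=\Disjunionu{k_0+\cdots+k_n=nd+k}\big(\Skel_{k_0}(\vct U_0C_0)\cap\cdots\cap\Skel_{k_n}(\vct U_nC_n)\big)\cap M ,
\]
and the same identity for $\tilde C$ with $\Skel_{k_0}(\vct U_0C_0)$ replaced by $\Skel_{k_0}(\vct U_0\tilde C_0)$. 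Since $M\subseteq\vct U_0M_0$, the only factor affected by the change of the $0$-th cone is $\Skel_{k_0}(\vct U_0C_0)\cap\vct U_0M_0=\vct U_0\big(\Skel_{k_0}(C_0)\cap M_0\big)$, where I use $\Skel_j(\vct TC_0)=\vct T\Skel_j(C_0)$ for $\vct T\in\Gl_d$ (immediate from $\mL_j(\vct TC_0)=\{\vct TL\mid L\in\mL_j(C_0)\}$ and preservation of relative interiors); by the hypothesis $\Skel_{k_0}(C_0)\cap M_0=\Skel_{k_0}(\tilde C_0)\cap M_0$ this equals $\vct U_0\big(\Skel_{k_0}(\tilde C_0)\cap M_0\big)=\Skel_{k_0}(\vct U_0\tilde C_0)\cap\vct U_0M_0$. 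Hence the two disjoint decompositions agree term by term, so $\Skel_k(C)\cap M=\Skel_k(\tilde C)\cap M$, and locality concludes.

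The main obstacle is organizational rather than conceptual: one must make sure the bound $f_k(C)\le f_{nd+k}(C_0\times\cdots\times C_n)$ is \emph{uniform} over the generic set~$G$ (it is, as the right-hand side is independent of the~$\vct Q_i$), so that the expectation is genuinely finite and not merely a.s.\ finite; and one must check that the skeleton decompositions of~$C$ and~$\tilde C$ from Lemma~\ref{lem:generic-skeleton-inters} are indexed in exactly the same way, so that they can be matched term by term after intersecting with~$M$. No new geometric input beyond the three genericity lemmas is required.
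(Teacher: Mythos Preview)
Your proposal is correct and follows essentially the same route as the paper: both parts rest on Lemmas~\ref{lem:TQC-alm-sure-generic}, \ref{lem:generic-skeleton-inters}, and~\ref{lem:gen-pos->loc-cont}, and part~(2) is proved by showing $\Skel_k(C)\cap M=\Skel_k(\tilde C)\cap M$ via the generic skeleton decomposition and then invoking locality. The only minor differences are that you are more explicit than the paper about why the generic set has full Haar measure (extracting this from the Gram-determinant argument in the proof of Lemma~\ref{lem:TQC-alm-sure-generic}, whose statement only asserts open and dense), and that your uniform bound $f_k(C)\le f_{nd+k}(C_0\times\cdots\times C_n)$ is a sharper version of the paper's cruder $f_k(C)\le\prod_i\sum_j f_j(C_i)$; both serve the same purpose.
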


\begin{proof}
(1) Combining Lemma~\ref{lem:gen-pos->loc-cont} and Lemma~\ref{lem:TQC-alm-sure-generic} with the fact that $O(d)^{n+1}$ is compact yields the existence of the expectations. The expectations are finite, since $\Theta_k(C,\M)\leq 1$, and $\Psi_k(C,M)$ can be upper bounded, for example, by $\Psi_k(C,M)\leq u_k(C)\leq f_k(C)$, and $f_k(C_0\cap\cdots\cap C_n)\leq \prod_{i=0}^n \sum_{j=0}^d f_j(C_i)$.

(2) To simplify notation let $C := \bigcap_{i=0}^n \vct T_i \vct Q_i C_i$, $\tilde C := \bigcap_{i=0}^n \vct T_i \vct Q_i \tilde C_i$, $M := \bigcap_{i=0}^n \vct T_i \vct Q_i M_i$.
Combining Lemma~\ref{lem:TQC-alm-sure-generic} and Lemma~\ref{lem:generic-skeleton-inters} implies that almost surely
\begin{align*}
   M\cap \Skel_k(C) & = M\cap \bigcup_{k_0+\cdots+k_n=k+nd} \bigg( \bigcap_{i=0}^n \Skel_{k_i}(\vct T_i \vct Q_i C_i)\bigg) = \bigcup_{k_0+\cdots+k_n=k+nd} \bigg( \bigcap_{i=0}^n \vct T_i \vct Q_i \big( M_i\cap \Skel_{k_i}(C_i)\big) \bigg)
\\ & = \bigcup_{k_0+\cdots+k_n=k+nd} \bigg( \bigcap_{i=0}^n \vct T_i \vct Q_i \big( M_i\cap \Skel_{k_i}(\tilde C_i)\big) \bigg) = M\cap \bigcup_{k_0+\cdots+k_n=k+nd} \bigg( \bigcap_{i=0}^n \Skel_{k_i}(\vct T_i \vct Q_i \tilde C_i)\bigg)
\\ & = M\cap \Skel_k(\tilde C) .
\end{align*}
Therefore, the locality of the polyhedral measures, cf.~Proposition~\ref{prop:props-polyh-curv-meas}(3), implies that almost surely $\Psi_k(C,M)=\Psi_k(\tilde C,M)$.
\end{proof}

\begin{proposition}\label{prop:genericity_BL}
Let $C,D\in\P(\IR^d)$, $\M,\N_j\in\hmB(\IR^d,\IR^d)$, $j=1,2,\ldots$, such that $\N_j\subseteq\BL(C)$ for all~$j$, $\N_j\cap \N_{j'}=\emptyset$ for all $j\neq j'$, and $\M\subseteq\BL(D)$. Furthermore, let $\vct T\in\Gl_d$ and $k>0$. Then for almost all~$\vct Q\in O(d)$,
\begin{equation}\label{eq:add-Theta_k-M}
  \Theta_k\bigg( \vct{TQ} C\cap D , \bigg(\bigcup_{j=1}^\infty \vct{TQ} \N_j\bigg)\wedge\M\bigg) = \sum_{j=1}^\infty \Theta_k\big( \vct{TQ} C\cap D , \vct{TQ}\N_j\wedge\M\big) .
\end{equation}
\end{proposition}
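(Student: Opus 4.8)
The plan is to reduce the claimed countable additivity to a genericity statement about when the $\wedge$-operation behaves additively on the relevant biconic sets. The obstacle to pure set-theoretic additivity is exactly the phenomenon identified in Proposition~\ref{prop:biconic-conj}: for $d\geq2$, disjoint sets $\N_j,\N_{j'}$ need not give disjoint $\vct{TQ}\N_j\wedge\M$ and $\vct{TQ}\N_{j'}\wedge\M$. The point of the ``almost all $\vct Q$'' clause is that the bad overlaps happen only on a set of measure zero once everything is generic. So the strategy is: (i) reduce to the situation where the relevant face data is in general position using Lemma~\ref{lem:TQC-alm-sure-generic}; (ii) on that generic set, show that the overlaps $(\vct{TQ}\N_j\wedge\M)\cap(\vct{TQ}\N_{j'}\wedge\M)$ are $\Theta_k$-null; (iii) conclude additivity by the concentration property of $\Theta_k$ together with the countable additivity of $\Theta_k(C,\cdot)$ as a measure (Proposition~\ref{prop:props-supp-meas}(0), i.e.\ it is a biconic measure).

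First I would fix $E:=\vct{TQ}C\cap D$ and note $\big(\bigcup_j \vct{TQ}\N_j\big)\wedge\M = \bigcup_j\big(\vct{TQ}\N_j\wedge\M\big)$, which is immediate from the definition~\eqref{eq:def-conj-biconic} of $\wedge$ (it distributes over arbitrary unions in the first argument, just as in~\eqref{eq:De-Morg-biconic}). By the concentration property, $\Theta_k(E,\cdot)$ only sees the part of its argument lying in $\mSkel_k(E)$, so it suffices to understand $\mSkel_k(E)\cap\big(\vct{TQ}\N_j\wedge\M\big)$. Since $\N_j\subseteq\BL(C)$, we have $\vct{TQ}\N_j\subseteq\vct{TQ}\BL(C)=\BL(\vct{TQ}C)$ and $\M\subseteq\BL(D)$, hence $\vct{TQ}\N_j\wedge\M\subseteq\BL(\vct{TQ}C)\wedge\BL(D)=\BL(\vct{TQ}C\cap D)=\BL(E)$, using the lattice compatibility $\BL(C')\wedge\BL(D')=\BL(C'\cap D')$ recalled before Proposition~\ref{prop:biconic-conj}. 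So everything lives inside $\BL(E)$, and by Lemma~\ref{lem:decomp-Theta_k(C,M)} I can decompose $\Theta_k(E,\cdot)$ as a finite sum over $L\in\mL_k(E)$ of $\Theta_k(L,\cdot\cap\BL(L))$, each of which is (after the obvious identification) a product of two Gaussians $\gamma_L\otimes\gamma_{L^\bot}$. On each such product measure $\wedge$ restricted to $\BL(L)=L\times L^\bot$ really does act like a genuine product-set intersection in a suitable sense, so additivity will follow once we know the pairwise intersections are null.

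The crux is step (ii): showing $\Theta_k\big(E,(\vct{TQ}\N_j\wedge\M)\cap(\vct{TQ}\N_{j'}\wedge\M)\big)=0$ for $j\neq j'$ and almost all $\vct Q$. Write $\N_j=\bigcup_\alpha(A_{j\alpha}\times B_{j\alpha})$ and $\M=\bigcup_\beta(P_\beta\times Q_\beta)$ as countable unions of direct products (possible by Proposition~\ref{prop:bicon-prod}); since $\N_j\cap\N_{j'}=\emptyset$, for each $\alpha,\alpha'$ either $A_{j\alpha}\cap A_{j'\alpha'}=\emptyset$ or $B_{j\alpha}\cap B_{j'\alpha'}=\emptyset$. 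A typical offending fibre of $(\vct{TQ}\N_j\wedge\M)\cap(\vct{TQ}\N_{j'}\wedge\M)$ has first coordinate $\vct x$ lying in $\vct{TQ}A_{j\alpha}\cap\vct{TQ}A_{j'\alpha'}\cap P_\beta\cap P_{\beta'}$, which is empty unless $A_{j\alpha}\cap A_{j'\alpha'}\neq\emptyset$, forcing $B_{j\alpha}\cap B_{j'\alpha'}=\emptyset$; and the second coordinate then lies in a sum $\vct{TQ}B_{j\alpha}+Q_\beta$ meeting $\vct{TQ}B_{j'\alpha'}+Q_{\beta'}$. Now I would argue that $\Theta_k(E,\cdot)$ of such a set vanishes: by the decomposition via Lemma~\ref{lem:decomp-Theta_k(C,M)}, on each face $L\in\mL_k(E)$ the relevant measure is $\gamma_L\otimes\gamma_{L^\bot}$, and the first marginal is carried by $\gamma_L\big(\vct{TQ}A_{j\alpha}\cap\vct{TQ}A_{j'\alpha'}\cap L\big)$. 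If $A_{j\alpha},A_{j'\alpha'}$ are conic sets that are not full-dimensional their intersection is $\gamma$-null automatically; the genuinely dangerous case is when $A_{j\alpha}=A_{j'\alpha'}$ is full-dimensional but $B_{j\alpha}\cap B_{j'\alpha'}=\emptyset$. In that case I need that for generic $\vct Q$ the second-coordinate set $(\vct{TQ}B_{j\alpha}+Q_\beta)\cap(\vct{TQ}B_{j'\alpha'}+Q_{\beta'})\cap L^\bot$ is $\gamma_{L^\bot}$-null — and since $B_{j\alpha},B_{j'\alpha'}$ are disjoint conic sets lying in the polar-type data $\BL(C)$, at least one of them is not full-dimensional (disjoint conic sets with $\vct0$ removed cannot both be full-dimensional in their ambient space), so after rotation its Minkowski-sum with $Q_\beta$ is still of dimension strictly less than the ambient, and the intersection is null. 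Invoking Lemma~\ref{lem:TQC-alm-sure-generic} to handle the countably many face/product-piece combinations simultaneously, we discard a measure-zero set of $\vct Q$ and get the pairwise $\Theta_k$-nullity. I expect this fibre-by-fibre dimension-counting argument — making precise that disjointness of the conic pieces forces a dimension drop that a generic rotation cannot undo — to be the main obstacle; everything else is bookkeeping with the measure and the distributivity of $\wedge$ over unions.

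Finally, once the pairwise intersections are $\Theta_k(E,\cdot)$-null, I combine this with $\Theta_k(E,\cdot)\in\hmM(\IR^{d},\IR^d)$: by inclusion–exclusion / the standard argument that a measure is countably additive on a countable union whose pairwise intersections are null, we get
\[
\Theta_k\Big(E,\bigcup_{j}(\vct{TQ}\N_j\wedge\M)\Big)=\sum_{j}\Theta_k\big(E,\vct{TQ}\N_j\wedge\M\big),
\]
which is precisely~\eqref{eq:add-Theta_k-M}, valid for all $\vct Q$ outside the discarded null set. This completes the plan.
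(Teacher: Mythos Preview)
Your high-level outline is right: reduce to general position via Lemma~\ref{lem:TQC-alm-sure-generic}, distribute $\wedge$ over the union, decompose $\Theta_k(E,\cdot)$ over the $k$-faces of $E$ via Lemma~\ref{lem:decomp-Theta_k(C,M)}, and show the pairwise overlaps are $\Theta_k$-null. That is exactly the paper's strategy. The gap is entirely in your execution of step~(ii).

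Two concrete problems. First, Proposition~\ref{prop:bicon-prod} says only that $\hmB(\IR^d,\IR^d)=\hmB(\IR^d)\otimes\hmB(\IR^d)$; it does \emph{not} say that every biconic Borel set is a countable union of rectangles $A\times B$, and in general it is not. So your decomposition $\N_j=\bigcup_\alpha A_{j\alpha}\times B_{j\alpha}$ is unjustified. Second, and more seriously, the assertion ``disjoint conic sets with $\vct0$ removed cannot both be full-dimensional'' is false (take two opposite open half-spaces), so the dimension-drop argument that follows collapses. The genericity statement of Lemma~\ref{lem:TQC-alm-sure-generic} is about the \emph{face subspaces} of $C$ and $D$; it gives you nothing about arbitrary conic pieces $B_{j\alpha}$.

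The fix is to decompose along the face structure you are given, not along arbitrary rectangles. Since $\N_j\subseteq\BL(C)=\bigcup_{L_0\in\mL(C)}\BL(L_0)$ and $\M\subseteq\BL(D)=\bigcup_{L_1\in\mL(D)}\BL(L_1)$, and since in general position each $L\in\mL_k(C\cap D)$ is $L_0\cap L_1$ with $L_0\in\mL_i(C)$, $L_1\in\mL_j(D)$, $i+j=k+d$, one reduces to the pieces $\tilde\N_j:=(\N_j\cap\BL(L_0))\wedge(\M\cap\BL(L_1))\subseteq(L_0\cap L_1)\times(L_0^\bot+L_1^\bot)$. General position now gives $\dim(L_0\cap L_1)+\dim L_0^\bot+\dim L_1^\bot=d$, i.e.\ $L_0^\bot\cap L_1^\bot=\{\vct0\}$, so the second-coordinate decomposition $\vct x'+\vct y'$ with $\vct x'\in L_0^\bot$, $\vct y'\in L_1^\bot$ is \emph{unique}. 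Hence $(\vct x,\vct z)\in\tilde\N_j\cap\tilde\N_{j'}$ forces the same $(\vct x,\vct x')\in\N_j\cap\N_{j'}=\emptyset$, so the $\tilde\N_j$ are genuinely disjoint (not just $\Theta_k$-almost disjoint), and additivity of $\gamma_d\circ\add$ finishes the job. This is where the general-position hypothesis is actually used; your product-piece argument never engages it.
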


\begin{proof}
By Lemma~\ref{lem:TQC-alm-sure-generic} we have that $(\vct{TQ}C,D)$ is almost surely in general position. So assuming that $(C,D)$ is in general position, what we will do in the rest of the proof, it suffices to show that~\eqref{eq:add-Theta_k-M} holds for $\vct T=\vct Q=\Id_d$. Using Proposition~\ref{prop:biconic-conj} and Lemma~\ref{lem:decomp-Theta_k(C,M)}, we have
\begin{align*}
   \Theta_k\bigg( C\cap D , \bigg(\bigcup_{j=1}^\infty \N_j\bigg)\wedge\M\bigg) & = \sum_{L\in\mL_k(C\cap D)} \Theta_k\bigg( L , \bigcup_{j=1}^\infty \big(\N_j\wedge\M\big) \cap \BL(L) \bigg)
\\ & = \sum_{\substack{L_0\in\mL_i(C), L_1\in\mL_j(D)\\ i+j=k+d}} \Theta_k\bigg( L_0\cap L_1 , \bigcup_{j=1}^\infty \big(\N_j\wedge\M\big) \cap \big(\BL(L_0)\wedge \BL(L_1)\big) \bigg) ,
\end{align*}
where the second equality follows from the assumption that $(C,D)$ are in general position. Decomposing $\N_j=\bigcup_{L_0\in\mL(C)}(\N_j\cap \BL(L_0))$ and $\M=\bigcup_{L_1\in\mL(C)}(\M\cap \BL(L_1))$, and arguing as in Lemma~\ref{lem:decomp-Theta_k(C,M)},
\begin{align*}
   & \Theta_k\bigg( L_0\cap L_1 , \bigcup_{j=1}^\infty \big(\N_j\wedge\M\big) \cap \big(\BL(L_0)\wedge \BL(L_1)\big) \bigg)
\\ & = \sum_{\substack{L_0'\in\mL_i(C), L_1'\in\mL_j(D)\\ i+j=k+d}} \Theta_k\bigg( L_0\cap L_1 , \bigcup_{j=1}^\infty \Big(\big(\N_j\cap \BL(L_0')\big)\wedge\big(\M\cap \BL(L_1')\big)\Big) \cap \big(\BL(L_0)\wedge \BL(L_1)\big) \bigg)
\\ & = \Theta_k\bigg( L_0\cap L_1 , \bigcup_{j=1}^\infty \big(\N_j\cap \BL(L_0)\big)\wedge\big(\M\cap \BL(L_1)\big) \bigg) = \gamma_d\Bigg( \add\bigg( \bigcup_{j=1}^\infty \tilde\N_j \bigg) \Bigg) ,
\end{align*}
where $\tilde\N_j := \big(\N_j\cap \BL(L_0)\big)\wedge\big(\M\cap \BL(L_1)\big)$ and $\add\colon\IR^{d+d}\to\IR^d$ is given by $\add(\vct x,\vct x') = \vct x+\vct x'$, cf.~\eqref{eq:addoPi_C=Id}. Since $\IR^d=(L_0\cap L_1) + L_0^\bot + L_1^\bot$ with $\dim(L_0\cap L_1) + \dim L_0^\bot + \dim L_1^\bot = d$, every element $\vct z\in\IR^d$ can be uniquely written as $\vct z=\vct x+\vct y_0+\vct y_1$ with $\vct x\in (L_0\cap L_1)$, $\vct y_0\in L_0^\bot$, $\vct y_1\in L_1^\bot$. This shows that $\tilde\N_j$ are mutually disjoint as $\N_j$ are, which implies
\begin{align*}
   \gamma_d\Bigg( \add\bigg( \bigcup_{j=1}^\infty \tilde\N_j \bigg) \Bigg) & = \sum_{j=1}^\infty \gamma_d\big( \add( \tilde\N_j ) \big) .
\end{align*}
Putting things together, we obtain
\begin{align*}
   \Theta_k\bigg( C\cap D , \bigg(\bigcup_{j=1}^\infty \N_j\bigg)\wedge\M\bigg) & = \sum_{\substack{L_0\in\mL_i(C), L_1\in\mL_j(D)\\ i+j=k+d}} \; \sum_{j=1}^\infty\Theta_k\Big( L_0\cap L_1 , \big(\N_j\cap \BL(L_0)\big)\wedge\big(\M\cap \BL(L_1)\big) \Big)
\\ & = \sum_{j=1}^\infty \Theta_k\big( C\cap D , \N_j\wedge\M\big) . \qedhere
\end{align*}
\end{proof}

\end{document}